\DeclareMathAlphabet{\mathpzc}{OT1}{pzc}{m}{it}
\begin{document}

\title{{\bf Arithmetic properties of $3$-cycles of quadratic maps over $\mathbb{Q}$}}         % Enter your title between curly braces
\author{Patrick Morton and Serban Raianu}        % Enter your name between curly braces
\date{January 16, 2022}          % Enter your date or \today between curly braces
\maketitle

\begin{abstract}  It is shown that $c=-29/16$ is the unique rational number of smallest denominator, and the unique rational number of smallest numerator, for which the map $f_c(x) = x^2+c$ has a rational periodic point of period $3$.  Several arithmetic conditions on the set of all such rational numbers $c$ and the rational orbits of $f_c(x)$ are proved.  A graph on the numerators of the rational $3$-periodic points of maps $f_c$ is considered which reflects connections between solutions of norm equations from the cubic field of discriminant $-23$.
\end{abstract}

\section{Introduction.}

In this paper we will take another look at arithmetic properties of the quadratic map $f_c(x) = x^2 + c$ on $\mathbb{Q}$, focusing on its periodic points of order $3$.  These periodic points have been parametrized in \cite{m} and \cite{rw} and display some fascinating properties.  Periodic points of order $3$ are interesting for several reasons.  For one thing, Sharkovskii's Theorem (see \cite{de}, \cite{ro}, \cite{ly} and the references in \cite{bb}) says the following over $\mathbb{R}$: if a continuous map on an interval has a periodic point of period $3$, then it has real periodic points of all periods.  Secondly, rational periodic points of period three occur for infinitely many quadratic maps which also have $3$ preperiodic points in $\mathbb{Q}$ and therefore at least $6$ periodic and preperiodic points in $\mathbb{Q}$ altogether \cite{m}, \cite{si}.  It has been conjectured by Poonen \cite{p} that a quadratic map over $\mathbb{Q}$ can have no more than $8$ rational periodic or preperiodic points altogether.  In particular, one would like to show that any quadratic map with a rational $3$-cycle has no other periodic points in $\mathbb{Q}$; this has been shown to be true for fixed points and points of period $2$ by Poonen \cite{p}.  In other words, if a map $f_c(x)$ with $c \in \mathbb{Q}$ has a periodic point of period $3$ in $\mathbb{Q}$, then it does not have either fixed points or points of period $2$ in $\mathbb{Q}$.  The corresponding assertion is automatically true for periods $4$ and $5$, since it has been shown that no quadratic map over $\mathbb{Q}$ can have rational points with these periods.  (See \cite{m1}, \cite{fps}.)  In \cite{fps} it is conjectured that the map $f_c(x)$ (for $c \in \mathbb{Q}$) has no rational $n$-cycles for $n \ge 6$, but this is implied by Poonen's conjecture.  This follows from the fact that $f(x) = x^2 + c = a$ generally (for $a \neq c$) has two rational solutions whenever it has one, so any map with a rational $n$-cycle also has at least $n-1$ rational preperiodic points.  (In this paper we use the term {\it rational $n$-cycle} to refer to the orbits of rational periodic points of minimal period $n$.)  \medskip

In the center of this discussion sits the map $f_{-29/16}(x) = x^2 -\frac{29}{16}$, which is the only map with $3$ rational periodic points and $5$ rational preperiodic points (see \cite{p}).  We will characterize this map by showing in Theorems \ref{thm:1} and \ref{thm:4} that $c = -\frac{29}{16}$ is the rational number of smallest height for which $f_c(x)$ has a rational $3$-cycle.  (The height of a rational number is the maximum of the absolute values of its numerator and denominator.)  If the aforementioned conjecture is true, then the maximum number of rational periodic and preperiodic points would occur for the map $f_c(x)$, for which $c$ has smallest height (among all rational numbers for which $f_c(x)$ has a rational $3$-cycle). \medskip

\begin{figure}[h!]
        \centering
        \includegraphics[scale=.7]{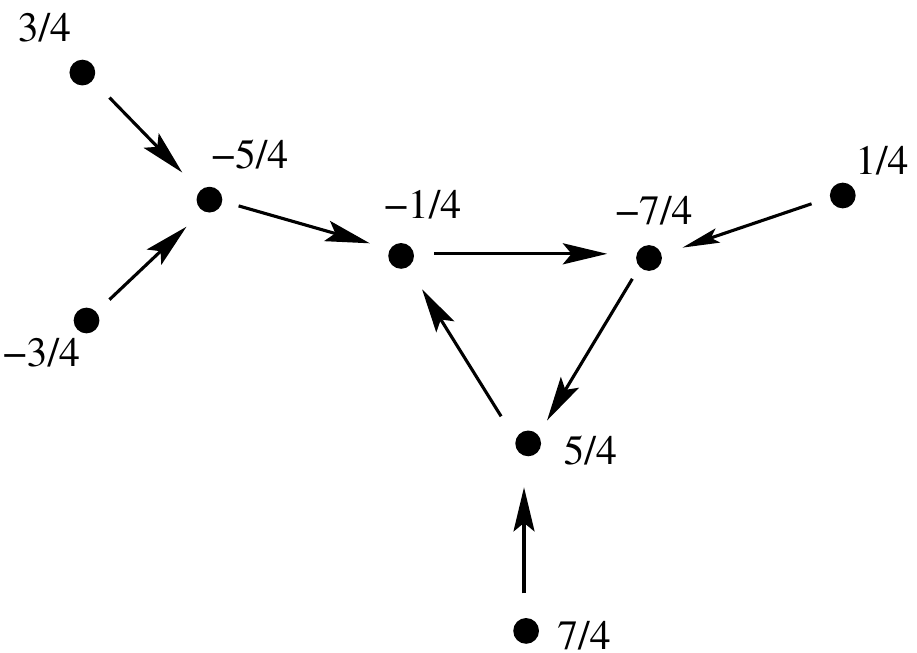}
        \caption{Rational preperiodic and periodic points for $x \mapsto x^2 -\frac{29}{16}$.
\label{fig:1}}
\end{figure}

The rational parametrization of the $c$-values for which $f_c(x)$ has a point of period three, which is valid over any field whose characteristic is not $2$, has especially interesting arithmetic properties over $\mathbb{Q}$.  For one thing, we can characterize the prime factors of the numerator of such a number $c$, as well as the prime factors of the numerators of the rational numbers $\{x_1, x_2, x_3\}$ which make up its $3$-cycle.  This is because the numerators in the parametrization turn out to be expressible as norms: the numerator of $c$ is a norm from $\mathbb{Q}(\zeta_7)$, where $\zeta_7$ is a primitive $7$-th root of unity (Theorem \ref{thm:2}); and the numerators of the $x_i$ are norms from the cubic field $\mathbb{Q}(\gamma)$, where $\gamma^3-\gamma-1 = 0$ (Theorem \ref{thm:5}).  (This $\gamma$ generates the Hilbert class field over the field $\mathbb{Q}(\sqrt{-23})$.)  This allows us to prove a minimum property for $c = -\frac{29}{16}$; namely, that it is the rational number of smallest numerator (in absolute value), for which $f_c(x)$ has a rational $3$-cycle (Theorem \ref{thm:4}).  It is also one of only two values of $c$, for which some $x_i$ in the $3$-cycle has a numerator which is $\pm 1$ (Theorem \ref{thm:7}(a)). \medskip

The paper is laid out as follows. In Section 2 we prove in an elementary way that $c=-\frac{29}{16}$ is the unique rational number of smallest denominator, for which $f_c(x)$ has a rational $3$-cycle, as well as the only such rational number whose denominator is a power of $2$.  In Section 3 we use basic algebraic number theory to show that $c = -\frac{29}{16}$ has the minimum property with respect to its numerator.  This requires us to use several well-known facts about the cyclotomic field $\mathbb{Q}(\zeta_7)$.  We also exhibit the numerators of the $3$-periodic points $x_i$ as norms and determine when one of them is $\pm 1$, $\pm 5$ or $\pm 7$ (the smallest possible values).  This requires us to solve several Thue equations, which boils down to determining when the $n$-th term of a certain linear recurring sequence is zero.  \medskip

In Section 4, we define an infinite graph $\Gamma$ on the absolute values of possible numerators of $3$-periodic points of $f_c(x)$, as $f_c$ varies over all maps with rational $3$-cycles: in this graph two positive integer nodes $a, b$ are connected by an edge if $\pm a, \pm b$ occur as distinct numerators of elements of the rational $3$-cycle of $f_c(x)$, for some $c$.  See Figures \ref{fig:2}-\ref{fig:4}.  This graph $\Gamma$ has some interesting and mysterious properties which reflect the arithmetic in the field $K = \mathbb{Q}(\gamma)$.  We state a number of conjectures concerning this graph.  For example, its connected components all appear to be finite subgraphs, and the number of triangles in $\Gamma$ which meet at a given vertex $a$ seems to equal the number of {\it allowable} solutions of the Thue equation
$$m^3+2m^2n+mn^2+n^3 = a,$$
meaning solutions $(m,n) \in \mathbb{Z}^2$ for which $mn(m+n) \neq 0$ and $\gcd(m,n) = 1$.  In Theorem \ref{thm:10} we show that the last conjecture is true, if every triangle in $\Gamma$ is a {\it $c$-triangle}, meaning a triangle which connects the numerators of the rational numbers in a $3$-cycle of $f_c$, for some $c$.  In Theorem \ref{thm:9} we prove that the three numerators in a $3$-cycle determine the cycle and the map $f_c$.  We conjecture that two of the numerators in such a $3$-cycle are enough to determine the cycle, which Theorem \ref{thm:11} shows is true for all but finitely many pairs of numerators.  Then in Theorem \ref{thm:12} we show that there are infinitely many vertices in $\Gamma$ at which three triangles meet.  Moreover, the vertices we exhibit in this theorem are norms from the Hilbert class field $\Sigma$ of $\mathbb{Q}(\sqrt{-23})$.  The connected components of $\Gamma$ display a wide variety of shapes, their structure being determined by the connections between solutions of the above Thue equation.  The connected component of the vertex $1$, displayed in Figure \ref{fig:2}, is the most complex of the connected components we have found.  \medskip

Finally, in Section 5 we give a proof that the map $f_{-29/16}(x)$ has no rational $n$-cycles for $n \neq 3$, and show more generally that any map $f_c(x)$ with a rational $3$-cycle, for which the denominator of $c$ is relatively prime to $3, 5$ or $7 \cdot 29$, has no rational $n$-cycles for $n \neq 3$.  (In \cite{ms} this is expressed by saying that the map $f_c(x)$ has {\it good reduction} at these primes.)  \medskip

It is hoped that the arithmetic properties proved here may eventually contribute to a proof that the quadratic maps with rational $3$-cycles have no other rational periodic cycles.

\section{Quadratic maps with rational $3$-cycles.}
A point $\alpha$ of minimal period $3$ for the map $f(x) = x^2+c$ satisfies the equations $f^3(\alpha) = \alpha$ and $f(\alpha) \neq \alpha$, where $f^n(x)$ denotes the $n$-fold iteration of $f(x)$ with itself.  Hence, setting $f(x,y) = x^2+y$, the point $(x,y) = (\alpha, c)$ satisfies the equation of the curve $\Phi_3(x,y) = 0$, where
\begin{align}
\notag \Phi_3(x,y) = &\frac{f^3(x,y) - x}{f(x,y)-x} = x^6 +x^5 +(3y+1)x^4 +(2y+1)x^3\\
\label{eqn:1} &+(3y^2 +3y+1)x^2 +(y^2 +2y+1)x+y^3 +2y^2 +y+1.
\end{align}
See \cite{m}, \cite{mp}.  The curve $\Phi_3(x,y) = 0$ has genus $0$ and the rational point $(x,y) = (-\frac{7}{4},-\frac{29}{16})$ and therefore has a rational parametrization, which can be given by
\begin{align}
\notag x = x(t) &= \frac{t^3 +t^2 - t+7}{4(t^2 - 1)},\\
y = y(t) &= -\frac{t^6-2t^5+11t^4+20t^3+23t^2-18t+29}{16(t^2-1)^2}.
\label{eqn:2}
\end{align}
Here $t$ is the parameter given by
$$t = 1 + 2(x^2 + x + y).$$
See \cite[Thm. 4]{m}.  In particular, with $t = 0$ we have (see \cite[eq. (2)]{m})
$$\Phi_3\left(x,-\frac{29}{16}\right) = \left(x+\frac{7}{4}\right)\left(x+\frac{1}{4}\right)\left(x-\frac{5}{4}\right)\left(x^3+\frac{1}{4}x^2-\frac{41}{16}x+\frac{23}{64}\right).$$
This parametrization is therefore ``centered" at $c = -\frac{29}{16}$. \medskip

An equivalent parametrization, given in \cite{rw}, can be easily derived, as follows.  Putting $s = x^2+x+y$, write the polynomial $\Phi_3(x,y)$ in terms of $x$ and $s$ and set it equal to zero:
$$\Phi_3(x,y) = s^3 - 2s^2 x + 2s^2 - 2sx + s + 1 = 0.$$
Then solve for $x = x_1(s)$ and $y = y_1(s) = s-x_1^2(s)-x_1(s)$ in terms of $s$:
\begin{align}
\label{eqn:8} x_1(s) & = \frac{s^3 + 2s^2 + s + 1}{2s(s+ 1)},\\
\label{eqn:9} y_1(s) & =  -\frac{s^6 + 2s^5 + 4s^4 + 8s^3 + 9s^2 + 4s + 1}{4s^2(s + 1)^2}.
\end{align}
(This calculation is implicit in the proof of \cite[Lemma 1]{m}.  Note that the equation $2(b + 1)c = 2(b - 1)$ in the last paragraph of that proof should read $2(b + 1)c = -2(b + 1)$ or $2(b+1)(c+1) = 0$, so that the putative root $ax^2+bx+c = -x^2 -x +c$ in that proof equals $y = -x^2-x+s$ if $c$ is replaced by $s$.)  Note that $y_1(s)$ is invariant under the map $s \rightarrow \psi(s) = -\frac{s+1}{s}$ and its square $s \rightarrow \psi^2(s) = \frac{-1}{s+1}$.  Applying the map $\psi(s)$ to $x_1(s)$ gives the other elements in the orbit of $x_1(s)$:
\begin{align}
\label{eqn:11} x_2(s) & = x_1(\psi(s)) = \frac{s^3 - s - 1}{2s(s + 1)},\\
\label{eqn:12} x_3(s) & = x_1(\psi^2(s)) = -\frac{s^3 + 2s^2 + 3s + 1}{2s(s + 1)}.
\end{align}
Throughout the paper, we will use $\{x_1, x_2, x_3\}$ to denote the unique rational orbit of the map $f_c(x) = x^2+c$, where $c = y(t)$ or $y_1(s)$ is an element of $\mathbb{Q}$.  Note that the parameter $s = x^2 + x + y$ is rational whenever $(x,y)$ is a rational point on $\Phi_3(x,y) = 0$.  Furthermore, only one of the two orbits of period $3$ can be rational, by \cite[Thm. 3]{m}.\medskip

We begin by proving the following result.

\newtheorem{thm}{Theorem}

\begin{thm}
The value $c = -\frac{29}{16}$ is the unique rational number with smallest denominator, for which the quadratic map $f_c(x) = x^2 + c$ has a rational cycle with period $3$.
\label{thm:1}
\end{thm}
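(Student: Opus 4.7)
The plan is to apply the parametrization (\ref{eqn:9}) of the curve $\Phi_3(x,y)=0$ and compute the exact denominator of $c = y_1(s)$ when $s \in \mathbb{Q}$, then minimize over admissible $s$.

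Writing $s = m/n$ with $\gcd(m,n) = 1$, I would substitute and clear denominators to obtain
\[
c = y_1\!\left(\frac{m}{n}\right) = -\frac{N(m,n)}{4\, m^2 n^2 (m+n)^2}, \qquad N(m,n) = m^6 + 2m^5 n + 4m^4 n^2 + 8m^3 n^3 + 9m^2 n^4 + 4m n^5 + n^6.
\]
The key step is to show this fraction is already in lowest terms, which reduces to three short congruence checks: (i) $N \equiv m^6 + m^2 n^4 + n^6 \pmod{2}$, and since $\gcd(m,n) = 1$ rules out $m$ and $n$ both even, every remaining case gives $N \equiv 1 \pmod 2$, so $N$ is odd; (ii) $N(m,n) \equiv n^6 \pmod m$ and $N(m,n) \equiv m^6 \pmod n$, so $\gcd(N, mn) = 1$; (iii) $N(m,-m) = (1-2+4-8+9-4+1)\, m^6 = m^6$, hence $N(m,n) \equiv m^6 \pmod{m+n}$ and $\gcd(N, m+n) = 1$. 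Thus the reduced denominator of $c$ is exactly $4\, m^2 n^2 (m+n)^2$.

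To finish I minimize $|mn(m+n)|$ subject to $\gcd(m,n) = 1$ and $mn(m+n)\neq 0$. The value $|mn(m+n)| = 1$ is impossible, since it would force $|m| = |n| = 1$ and then $m+n \in \{-2,0,2\}$, never $\pm 1$. The next value $|mn(m+n)| = 2$ requires $\{|m|,|n|,|m+n|\} = \{1,1,2\}$, which forces $s \in \{1,\,-2,\,-1/2\}$. Because $y_1(s)$ is invariant under $\psi(s) = -(s+1)/s$ and these three values form a single $\psi$-orbit, they all produce the same $c = y_1(1) = -29/16$. Every other admissible pair $(m,n)$ yields denominator at least $36$, so $c = -29/16$ is the unique rational of smallest denominator (namely $16$) for which $f_c$ has a rational $3$-cycle.

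I do not expect a serious obstacle: the coprimality checks and the final minimization are elementary finite arguments. The only conceptual input used is that (\ref{eqn:8})--(\ref{eqn:9}) captures \emph{every} rational point on $\Phi_3(x,y) = 0$, which holds because $\Phi_3(x,y) = 0$ has genus $0$ with a rational point and the formulas above give a birational parametrization over $\mathbb{Q}$.
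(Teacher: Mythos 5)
Your proposal is correct and follows essentially the same route as the paper's proof: reduce via the parametrization $y_1(m/n)=-A(m,n)/\bigl(4m^2n^2(m+n)^2\bigr)$, verify the fraction is in lowest terms by the same three coprimality observations (oddness of $A$, $\gcd(A,mn)=1$, and $A(m,-m)=m^6$), and then minimize $|mn(m+n)|$ to isolate $s\in\{1,-2,-\tfrac12\}$ and $c=-\tfrac{29}{16}$. The only cosmetic difference is that you invoke the $\psi$-invariance of $y_1$ to identify the three values of $s$, where the paper simply evaluates $y_1$ at each; both are fine.
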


\begin{proof}
Assume $c = y_1(s)$, so that $f_c(x)$ has a rational $3$-cycle.  Let $s = \frac{m}{n}$, where $m, n \in \mathbb{Z}$ and $(m,n) = 1$.  Then
\begin{equation}
y_1\left(\frac{m}{n}\right) = -\frac{m^6 + 2m^5n + 4m^4n^2 + 8m^3n^3 + 9m^2n^4 + 4mn^5 + n^6}{4m^2n^2(m + n)^2}.
\label{eqn:3}
\end{equation}
The numerator in this expression is
\begin{align}
\label{eqn:4} A(m,n) & = m^6 + 2m^5n + 4m^4n^2 + 8m^3n^3 + 9m^2n^4 + 4mn^5 + n^6\\
\notag & \equiv (m^3 + mn^2 + n^3)^2 \ (\textrm{mod} \ 2).
\end{align}
It is clear that $A(m,n)$ is always an odd integer and that $16$ divides the denominator of (\ref{eqn:3}) when written in lowest terms.  Now set
\begin{equation}
B(m,n) = 4m^2n^2(m + n)^2 = 16 \left(\frac{mn(m+n)}{2}\right)^2.
\label{eqn:4b}
\end{equation}
We have that $(A(m,n), mn) = (A(m,n),m+n) = 1$, the latter since $A(m,-m) = m^6$. Thus, any common prime factor of $A(m,n)$ and $m+n$ divides $m$ and the expression
$$y_1\left(\frac{m}{n}\right) = -\frac{A(m,n)}{B(m,n)}$$
in (\ref{eqn:3}) must be in lowest terms.  Now the denominator $B(m,n)$ can only equal $16$ when $mn(m+n) = \pm 2$.  Hence, the only possibilities are
$$(m,n) = (1, 1), (-1, -1), (2, -1), (-2, 1), (1, -2), (-1, 2),$$
yielding that $s \in \{ 1, -2, -\frac{1}{2}\}$.  Hence $c = y_1(1) = y_1(-2) = y_1(-1/2) = -\frac{29}{16}$.
\end{proof}
\medskip

\noindent {\bf Remark.} In the parametrization of (\ref{eqn:1}) given by (\ref{eqn:8}) and (\ref{eqn:9}), $y_1(s)$ is invariant under the map $s \rightarrow \psi(s)$ and its square.  This translates to the following invariance property for $A(m,n)$:
\begin{equation}
A(m,n) = A(-m-n,m) = A(-n,m+n).
\label{eqn:10}
\end{equation}
Later we will have occasion to use the mapping
\begin{equation}
\beta(m,n) = (-n, m+n),
\label{eqn:10a}
\end{equation}
which satisfies $\beta^3(m,n) = (-m, -n)$ and therefore has order $6$ on pairs $(m,n) \in \mathbb{Z}^2$ with $(m,n)=1$ and $mn(m+n) \neq 0$.  Equation (\ref{eqn:10}) shows that $A(\beta^i(m,n)) = A(m,n)$ for $0 \le i \le 5$.
\medskip

The argument in the proof of Theorem \ref{thm:1} is the basis for proving the following theorem, and for proving Theorems \ref{thm:3} and \ref{thm:4} below.  In the rest of the paper we will make use of the formula $c = y_1(m/n) = -\frac{A(m,n)}{B(m,n)}$ for the rational values of $c$, for which $f_c$ has a rational $3$-cycle.  The polynomials $A(m,n), B(m,n)$ will always mean the expressions in (\ref{eqn:4}) and (\ref{eqn:4b}).

\begin{thm} Define the expressions $A(m,n)$ and $B(m,n)$ by (\ref{eqn:4}) and (\ref{eqn:4b}). \smallskip

(a) If $m, n \in \mathbb{Z}$ with $\gcd(m,n) = 1$, the exact denominator of $c = y_1(m/n) = -\frac{A(m,n)}{B(m,n)}$ is
$$B(m,n) = 16 C(m,n)^2, \ \ \textrm{where} \ \ C(m,n) = \frac{mn(m+n)}{2} \in \mathbb{Z}.$$

(b) The values of $c \in \mathbb{Q}$ which are given by (\ref{eqn:2}) (or (\ref{eqn:9})) have the form
$$c =  -\frac{\textsf{Norm}_{\mathbb{Q}(\zeta)/\mathbb{Q}}(m-(\zeta+\zeta^2)n)}{(4C(m,n))^2},$$
where $\zeta = e^{2\pi i/7}$ is a primitive $7$-th root of unity and $m,n \in \mathbb{Z}$ with $(m,n) = 1$. \smallskip

\noindent (c) $c=-\frac{29}{16}$ is the only rational number whose denominator is a power of $2$, for which $f_c(x) = x^2+c$ has a rational cycle of period $3$. \smallskip

\noindent (d) If $c$ is the only rational number with a given denominator, for which $f_c(x)$ has a rational cycle of period 3, then $c = -\frac{29}{16}$. \smallskip

\noindent (e) If the numerator $A(m,n) = q^e$ of $c = -\frac{A(m,n)}{B(m,n)}$ is a prime power, where $q \neq 7$, then this is the only rational $c$ with numerator $q^e$, for which $f_c(x)$ has a rational $3$-cycle.  The same holds if $A(m,n) = 7q^e$, for some prime power $q^e$.
\label{thm:2}
\end{thm}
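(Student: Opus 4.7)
For part (a), the proof of Theorem~1 already gives $A(m,n) \equiv 1 \pmod 2$ and $\gcd(A(m,n), mn(m+n)) = 1$, so $-A(m,n)/B(m,n)$ is already in lowest terms. The integrality of $C(m,n) = mn(m+n)/2$ is clear: if both $m, n$ are odd then $m+n$ is even, and otherwise $mn$ is even. Hence $B(m,n) = 16\,C(m,n)^2$ is the exact denominator of $c$. For part (b), the six Galois conjugates $\alpha_k = \zeta^k + \zeta^{2k}$ of $\alpha = \zeta + \zeta^2$ are pairwise distinct (direct check), so $\alpha$ generates $\mathbb{Q}(\zeta_7)$ over $\mathbb{Q}$. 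I would expand $\prod_{k=1}^{6}(x - \alpha_k)$ using $\sum_{j=0}^{6} \zeta^j = 0$ to compute the elementary symmetric functions of the $\alpha_k$, and verify that this product equals $A(x,1) = x^6 + 2x^5 + 4x^4 + 8x^3 + 9x^2 + 4x + 1$. Homogenizing gives $A(m,n) = n^6 A(m/n, 1) = \textsf{Norm}_{\mathbb{Q}(\zeta)/\mathbb{Q}}(m - \alpha n)$. Part (c) then follows from (a): $B(m,n)$ is a power of $2$ iff $mn(m+n) = \pm 2^k$, and coprimality of $(m, n)$ forces either both $m, n \in \{\pm 1\}$ (with $m = n$) or one of them $\pm 1$ and the other $\pm 2$ (with $m + n = \pm 1$), yielding $|C| = 1$ and $c = -29/16$ in every case.

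Part (d) hinges on the polynomial identity
\begin{equation*}
A(m,n) - A(n,m) = m\,n\,(m + n)\,(n - m)\,(m + 2n)\,(2m + n),
\end{equation*}
which I would verify by direct expansion: the difference equals $-mn(2m^4 + 5m^3 n - 5m n^3 - 2n^4)$, and the quartic factor has roots $m = n, -n, -2n, -n/2$, producing the stated factorization. For coprime $(m, n)$ with $mn(m + n) \neq 0$, the right-hand side vanishes precisely when $m = n$, $m = -2n$, or $n = -2m$ --- exactly the cases with $|C(m, n)| = 1$. A direct inspection of the six iterates $\beta^i(m, n)$, $0 \le i \le 5$, shows that $(n, m)$ lies in the $\beta$-orbit of $(m, n)$ only in those same degenerate situations. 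Hence for $|C(m, n)| > 1$, the pair $(n, m)$ is coprime, has the same denominator $16\,C^2$, lies in a distinct $\beta$-orbit, and yields $A(n, m) \neq A(m, n)$, so a different $c$. Uniqueness of $c$ for a given denominator therefore forces $|C| = 1$, and Theorem~1 gives $c = -29/16$.

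For part (e), I would use the norm identity from (b) together with the fact that $\mathcal{O}_K = \mathbb{Z}[\zeta_7]$ has class number one. If $A(m, n) = A(m', n') = q^e$ with $q \neq 7$ prime, then $(m - \alpha n)$ and $(m' - \alpha n')$ are both principal ideals of norm $q^e$, supported on primes above $q$. The coprimality $\gcd(m, n) = 1$ implies that the rational prime ideal $(q) = \prod_i \mathfrak{p}_i$ does not divide $(m - \alpha n)$, so at least one exponent in its prime factorization vanishes. Using the splitting of $q$ (governed by the order $f$ of $q \bmod 7$), the Galois action on the primes above $q$, and the unit group of $\mathbb{Z}[\zeta_7]$, I would show that all principal ideals of norm $q^e$ arising from coprime lattice elements $m - \alpha n$ form a single $\beta$-orbit, whence $c = c'$. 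For $A = 7 q^e$, the reduction $\alpha \equiv 2 \pmod{(1-\zeta)}$ shows that $(1 - \zeta)$ divides $(m - \alpha n)$ exactly once (a higher power would force $7 \mid \gcd(m, n)$), so this ramified factor can be peeled off and the analysis reduced to the prime-power case.

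The main obstacle is part (e): while the norm framework and class-number-one property are standard, the core step --- showing that the rank-$2$ sublattice $\mathbb{Z} + \alpha\mathbb{Z} \subset \mathcal{O}_K$ meets each Galois orbit of principal norm-$q^e$ ideals in at most one associate class, which is what forces $\beta$-orbit uniqueness on the level of pairs --- requires explicit case analysis on $f = \mathrm{ord}_7(q)$ and on the fundamental units of $\mathbb{Z}[\zeta_7]$.
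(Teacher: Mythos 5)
Parts (a)--(d) of your proposal are correct and essentially identical to the paper's argument: (a) is read off from the proof of Theorem \ref{thm:1}, (b) is the verification that $A(x,1)$ is the minimal polynomial of $\zeta+\zeta^2$, (c) is the elementary power-of-two analysis, and (d) rests on the same factorization $A(m,n)-A(n,m) = -mn(m-n)(2m+n)(m+2n)(m+n)$ together with the observation that $(n,m)$ produces a second value of $c$ with the same denominator whenever this expression is nonzero.

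Part (e), however, contains a genuine gap, and you have in effect flagged it yourself. Your plan is to work in the full degree-six field $\mathbb{Q}(\zeta_7)$ and to show that all coprime lattice elements $m-\alpha n$ of norm $q^e$ form a single $\beta$-orbit. That is strictly stronger than what Theorem \ref{thm:2}(e) asserts (which is only that $c$, i.e.\ the pair consisting of $A$ and $|mn(m+n)|$, is determined), and the step you call the ``main obstacle'' --- controlling which associates $\varepsilon\cdot(m-\alpha n)$ land back in the rank-$2$ module $\mathbb{Z}[1,\zeta+\zeta^2]$ --- is a two-parameter unit-equation problem in a field whose unit group has rank $2$. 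Nothing in your sketch resolves it, and its general form is precisely the paper's open Conjecture \ref{conj:1}; so the proposed route does not yield a proof. The paper avoids all of this by descending to the imaginary quadratic subfield $\mathbb{Q}(\sqrt{-7}) \subset \mathbb{Q}(\zeta_7)$: grouping the six conjugates in pairs gives the identity
$$A(m,n) = \bigl(m^3+m^2n-2mn^2-n^3\bigr)^2 + 7\,m^2n^2(m+n)^2 = x^2+7y^2, \qquad y = mn(m+n),$$
with $x\neq 0$ (the cubic form is irreducible) and $y\neq 0$. Since $q^e = x^2+7y^2$ with $\gcd(x,y)=1$ forces $q$ to split as $\pi\bar\pi$ in the class-number-one ring $\mathbb{Z}\bigl[\tfrac{1+\sqrt{-7}}{2}\bigr]$, and since $x+y\sqrt{-7}$ must then be $\pm\pi^e$ or $\pm\bar\pi^e$ (a mixed product $\pi^i\bar\pi^j$ with $i,j>0$ would make $q$ divide $\gcd(x,y)$), the value $|y| = |mn(m+n)|$ is uniquely determined, hence so is $B(m,n)$ and therefore $c$. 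The unit group of the quadratic field is just $\{\pm 1\}$, which is what makes the argument close; your version in $\mathbb{Q}(\zeta_7)$ has no analogous finiteness. The same descent handles $A=7q^e$ directly via $x+y\sqrt{-7} = \pm\sqrt{-7}\,\pi^e$, replacing your unverified claim about the exact power of $(1-\zeta)$ dividing $m-\alpha n$.
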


\begin{proof}
Part (a) follows from the proof of Theorem \ref{thm:1}.  Part (b) follows from (\ref{eqn:4}) and the fact that
$$f_\theta(x) = x^6 + 2x^5 + 4x^4 + 8x^3 + 9x^2 + 4x + 1 = \textsf{Norm}_{\mathbb{Q}(\zeta)/\mathbb{Q}}(x-(\zeta+\zeta^2))$$
is the minimal polynomial of $\theta = \zeta + \zeta^2$ over $\mathbb{Q}$, so that
\begin{equation}
A(m,n) = \textsf{Norm}_{\mathbb{Q}(\zeta)/\mathbb{Q}}(m-(\zeta+\zeta^2)n).
\label{eqn:4c}
\end{equation}
For (c), $B(m,n)= 4m^2n^2(m + n)^2$ can only be a power of $2$ if $m$ and $n$ are powers of $2$, which implies, without loss of generality, that $m = 2^k$ and $n = \pm 1$.  But $m+n = 2^k \pm 1$ must be a power of $2$, as well, which gives that $k=0$ and $n = 1$ or $k = 1$ and $n = -1$.  Thus, $B(m,n) = 16$, giving $c = -29/16$ by Theorem \ref{thm:1}.  For (d), note that $c = -A(m,n)/B(m,n)$ has the same denominator as $c' = -A(n,m)/B(n,m)$, and $c \neq c'$ unless $A(m,n) = A(n,m)$.  This condition holds if and only if
$$A(m,n)-A(n,m) = -mn(m - n)(2m + n)(m + 2n)(m + n) = 0.$$
Hence, $c \neq c'$ unless $m/n = 1, -1/2, -2$, i.e., $c = -29/16$.  Thus, if $c \neq -29/16$, there is another rational number $c'$ having the same denominator as $c$, for which $f_{c'}(x)$ has a rational cycle of period $3$. \smallskip

To prove (e), note that the field $\mathbb{Q}(\zeta)$ contains $\mathbb{Q}(\sqrt{-7})$, so by (b) the numerator $A(m,n)$ can be written as
\begin{align}
\notag A(m,n) &= \textsf{N}_{\mathbb{Q}(\sqrt{-7})/\mathbb{Q}}\left((m-n(\zeta+\zeta^2))(m-n(\zeta^2+\zeta^4))(m-n(\zeta+\zeta^4))\right)\\
\label{eqn:5} & = (m^3+m^2n-2m n^2-n^3)^2+7m^2n^2(m+n)^2.
\end{align}
If $A(m,n) = q^e$, $q \neq 7$, is a prime power, then $q^e = x^2+7y^2$ for some $x,y \in \mathbb{Z}$ with $(x,y) = (m^3+m^2n-2m n^2-n^3, mn(m+n)) = 1$.  Furthermore, $x = m^3+m^2n-2m n^2-n^3 \neq 0$ since this cubic form is irreducible and $y = mn(m+n) \neq 0$ by assumption.  This implies that the prime $q = \pi \bar \pi$ splits in the field $\mathcal{Q}=\mathbb{Q}(\sqrt{-7})$, where the norms of the conjugate primes $\pi, \bar \pi$ are both equal to $q$.  Then $q^e = x^2+7y^2$ implies that $x+y \sqrt{-7} = \pm \pi^i \bar \pi^j$, for some $i, j \ge 0$, $i+j = e$.  If $i$ and $j$ were both positive, with $i \le j$, this would imply that $\pi^i \bar \pi^i = q^i$ would divide $x+y \sqrt{-7}$.  But $q$ is odd and $\{1, \frac{1+\sqrt{-7}}{2}\}$ is a basis for the ring of integers in this field; hence $q^i \mid (x,y)$.  Thus, one of $i$ or $j$ must be $0$.  In that case, $x+y \sqrt{-7} = \pm \pi^e$, say.  Then the only other possible solutions of $A(m',n') = q^e = x'^2+7y'^2$ would be with $x'+y'\sqrt{-7} = \pm \bar \pi^e = \pm (x-y\sqrt{-7})$.  This gives that $m'n'(m'+n') = y' = \pm y = \pm mn(m+n)$, showing that $c = -\frac{A(m,n)}{B(m,n)}$ is unique. \medskip

If $A(m,n) = 7q^e = x^2 + 7y^2$, then a similar argument shows that the only possible solutions of  $A(m,n) = 7q^e$ satisfy $x+y\sqrt{-7} = \pm \sqrt{-7} \pi^e$ or $\pm \sqrt{-7} \bar \pi^e$ and that $y$ is unique, up to sign.
\end{proof}

\noindent {\bf Remarks.}
1. The proof of Theorem \ref{thm:2}(e) makes it clear that $y(t) < 0$ for all $t \in \mathbb{Q}, \ t \neq \pm 1$, since $A(m,n) = x^2+7y^2 > 0$.  Hence, $f_c(x) = x^2 + c$ has no rational periodic points of period $3$ if $c \ge 0$, by the results of \cite{m}. \smallskip

\noindent 2. The polynomial in (\ref{eqn:5}),
$$s(m,n) = m^3+m^2n-2m n^2-n^3 = \textsf{N}_{\mathbb{Q}(\theta)/\mathbb{Q}}(m-n(\zeta+\zeta^6)),$$
represents a norm to $\mathbb{Q}$ from the cyclic cubic extension $\mathbb{Q}(\theta)$, where $\theta = \zeta+\zeta^6 =2 \cos(2\pi/7)$. Also, $s(-n,m+n) = -s(m,n)$.  \smallskip

\noindent 3. The argument in part (e) of the above proof also shows that $A(m,n) =x^2+7y^2$ cannot be divisible by $7^e$, for $e \ge 2$, since this would imply that $7 \mid (x,y)$. \smallskip

\noindent 4. The numbers $C(m,n) = \frac{mn(m+n)}{2}$ are a generalization of triangular numbers:
$$C(m,n)=\sum_{i=1}^m\sum_{j=1}^n(i+j-1).$$

\newtheorem{conj}{Conjecture}

We suspect that the statement in Theorem \ref{thm:2}(e) is true more generally.

\begin{conj} If $A(m,n) = k$, for some pair $(m,n) \in \mathbb{Z}^2$ with $mn(m+n) \neq 0$ and $\gcd(m,n) = 1$, then $c = -\frac{A(m,n)}{B(m,n)}$ is the only rational number having the numerator $k$, for which $f_c(x)$ has a rational $3$-cycle.  Equivalently, if $k \in \mathbb{N}$, the equation
$$A(m,n) = m^6 + 2m^5n + 4m^4n^2 + 8m^3n^3 + 9m^2n^4 + 4mn^5 + n^6 = k$$
has either $0$ or $6$ solutions $(m,n)$, for which $mn(m+n) \neq 0, \gcd(m,n) = 1$.
\label{conj:1}
\end{conj}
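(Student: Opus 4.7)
The plan is to translate the conjecture into a statement about principal ideals in $\mathcal{O}_K$, where $K = \mathbb{Q}(\zeta)$ and $\zeta = e^{2\pi i/7}$, and to exploit the rigidity of the rank-two sublattice $L = \mathbb{Z}+\mathbb{Z}\theta \subset \mathcal{O}_K$, where $\theta = \zeta+\zeta^2$. By (\ref{eqn:4c}), allowable solutions $(m,n)$ of $A(m,n)=k$ correspond to allowable elements $\alpha = m-n\theta \in L$ with $N_{K/\mathbb{Q}}(\alpha) = k$. The first step is to reinterpret the $\beta$-action Galois-theoretically. Using $\zeta+\zeta^2+\cdots+\zeta^6 = -1$ one computes $\theta\cdot\sigma_2(\theta) = \zeta^3+\zeta^4+\zeta^5+\zeta^6 = -1-\theta$, where $\sigma_2:\zeta\mapsto\zeta^2$, so that
\[
\beta(\alpha) = -n-(m+n)\theta = -\theta\cdot\sigma_2(\alpha).
\]
Since $N_{K/\mathbb{Q}}(\theta)=A(0,-1)=1$, $\theta$ is a unit in $\mathcal{O}_K$, whence $(\beta\alpha) = \sigma_2((\alpha))$ as ideals. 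The six $\beta$-iterates of an allowable $\alpha$ therefore determine three distinct principal ideals, comprising a single orbit under the order-three subgroup $\langle\sigma_2\rangle$ of $G=\mathrm{Gal}(K/\mathbb{Q})$.

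The second step is to show that each such ideal has a unique allowable $L$-representative up to sign: if $\alpha,\alpha' \in L$ are allowable and $(\alpha) = (\alpha')$ in $\mathcal{O}_K$, then $\alpha' = \pm\alpha$. Expanding the condition $u\alpha \in L$ for $u \in \mathcal{O}_K^\times$ in the $\zeta$-basis $\{1,\zeta,\ldots,\zeta^5\}$ yields four linear constraints on the six coordinates of $u$ (kill the $\zeta^3,\zeta^4,\zeta^5$ components and equate the $\zeta,\zeta^2$ components); one should be able to verify that these constraints, together with integrality, the allowability of $\alpha$, and the explicit form of the torsion subgroup $\mu_{14}\subset\mathcal{O}_K^\times$, exclude every unit other than $\pm 1$. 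Combined with Step 1, this reduces the conjecture to the following claim: for every $k$, the set of principal ideals in $\mathcal{O}_K$ of norm $k$ that admit an allowable generator in $L$ forms at most one $\langle\sigma_2\rangle$-orbit.

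This final reduction is the main obstacle. It is the assertion that no unit multiple of any Galois conjugate $\sigma_j(\alpha)$, with $\sigma_j$ in the nontrivial coset $\{\sigma_3,\sigma_5,\sigma_6\}$ of $\langle\sigma_2\rangle$ in $G$, can itself lie in $L$ with allowable coordinates; equivalently, the ``complex conjugate'' coset of Galois conjugates of $\alpha$ is never rotated back into $L$ by a unit. Theorem \ref{thm:2}(e) handles the prime-power case $k=q^e$ (and $k=7q^e$) by descending to $\mathbb{Q}(\sqrt{-7})$, where $mn(m+n)$ is forced to be unique up to sign by the essentially rigid factorization of $k$ there; the composite case seems to require more. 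Two plausible approaches are an induction on the number $\omega(k)$ of distinct prime divisors of $k$ --- promoting Theorem \ref{thm:2}(e) by a multiplicativity argument that uses the finer structure of $K$ --- or a direct attack on the Thue equation $A(m,n)=k$ via geometry of numbers or Baker-type effective bounds on the action of the rank-two unit group $\mathcal{O}_K^\times$ on $L$. Either route must translate the rigid linear structure of $L \subset \mathcal{O}_K$ into a sharp count of ideal orbits, and this is where I expect the substantive difficulty to lie.
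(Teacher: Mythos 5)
This statement is Conjecture \ref{conj:1}: the paper does not prove it, and offers only a Pari verification for $k \le 10^{15}$ together with a remark reducing it to the assertion that two elements of $\mathbb{Z}[1,\zeta+\zeta^2]$ with equal norm must be related by a power of $\beta$. Your proposal is likewise not a proof, and to your credit you say so; but since the task is to assess it as a proof attempt, the verdict is that it has a genuine, acknowledged gap exactly where the paper leaves one. Your Step 2 (that an ideal has at most one allowable generator in $L=\mathbb{Z}+\mathbb{Z}\theta$ up to sign) is only asserted --- ``one should be able to verify'' is not a verification, and the interaction of the rank-two unit group with the rank-two lattice $L$ inside the rank-six ring $\mathbb{Z}[\zeta]$ is precisely the kind of thing that needs a real argument (compare the effort the paper expends in Theorem \ref{thm:4} and Lemma \ref{lem:1} just to control unit multiples in much more special situations). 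Your Step 3, which you correctly flag as the main obstacle, is the entire content of the conjecture: it is the same statement as the paper's own reformulation following Conjecture \ref{conj:1}, and neither the descent to $\mathbb{Q}(\sqrt{-7})$ of Theorem \ref{thm:2}(e) nor a generic appeal to Thue/Baker machinery (which gives finiteness, already noted in the paper, but not the exact count of $0$ or $6$) is known to close it.

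What your write-up does add, and what is worth keeping, is the identity $\beta(\alpha) = -\theta\cdot\sigma_2(\alpha)$ for $\alpha = m - n\theta$, $\theta = \zeta+\zeta^2$, which checks out: $\theta\,\sigma_2(\theta) = \zeta^3+\zeta^4+\zeta^5+\zeta^6 = -1-\theta$, so $-\theta\,\sigma_2(m-n\theta) = -n-(m+n)\theta$, and since $\textsf{N}(\theta) = A(0,-1) = 1$ the element $\theta$ is a unit. This explains the invariance (\ref{eqn:10}) Galois-theoretically and shows that the six $\beta$-iterates of an allowable solution span at most one orbit of principal ideals under $\langle\sigma_2\rangle \le \mathrm{Gal}(\mathbb{Q}(\zeta)/\mathbb{Q})$. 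That is a cleaner conceptual packaging than the paper's purely formal statement of (\ref{eqn:10}), and it isolates the open problem as: no unit can rotate an allowable element of $L$ onto an allowable element of $L$ lying over a conjugate ideal from the other coset $\{\sigma_3,\sigma_5,\sigma_6\}$ (nor onto a genuinely different ideal of the same norm). But a reformulation is not a proof, and you should present this as a framework for attacking the conjecture rather than as a proof proposal.
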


We have checked this on Pari for $k \le 10^{15}$, by verifying that the equation $A(m,n) = k$ has exactly six solutions for values of $k$ in this range.  These solutions are obtained from one solution $(m,n)$ using powers of the transformation $\beta(m,n) = (-n, m+n)$.  Cf. (\ref{eqn:10}).  It is of course clear that $A(m,n) = k$ has at most a finite number of solutions, by the Thue-Siegel-Roth theorem \cite{la1}, \cite{la2}.  In our case this is easy to see directly from equation (\ref{eqn:5}).  For a given positive integer $k = x^2+7y^2$, there are only finitely many possible values of $y = \pm mn(m+n)$, and therefore only finitely many possibilities for $m$ and $n$. \medskip

To prove this conjecture, one would need to show that two distinct elements $m_1+n_1(\zeta+\zeta^2)$ and $m_2+n_2(\zeta+\zeta^2)$ in the $\mathbb{Z}$-module $\mathbb{Z}[1,\zeta+\zeta^2]$ do not have the same norm from the field $\mathbb{Q}(\zeta)$, when $(m_2,n_2)$ is different from any of the transforms $\beta^i(m_1,n_1)$, for $0 \le i \le 5$. \medskip

This conjecture can also be reformulated using (\ref{eqn:5}) as follows. \medskip

\noindent {\bf Conjecture 1$'$.} {\it If the integer $k$ has two representations $k = X_i^2+7Y_i^2$, $i = 1,2$, for which
$$X_i =  m_i^3+m_i^2n_i-2m_i n_i^2-n_i^3, \ \ Y_i = m_in_i(m_i+n_i),$$
for pairs $(m_i, n_i)$ satisfying $m_in_i(m_i+n_i) \neq 0$ and $\gcd(m_i, n_i) = 1$, then necessarily $Y_1 = \pm Y_2$.}

\section{Arithmetic properties of the maps $f_c(x)$.}

\begin{thm} If $m,n \in \mathbb{Z}$ with $(m,n) = 1$, the numerator of $c = -\frac{A(m,n)}{B(m,n)} \in \mathbb{Q}$ has the form $A(m,n) = 7^a (14b+1)$, for $a, b \in \mathbb{N}$.  Moreover, all the prime factors of the integer $14b+1$ are congruent to $1$ (mod $7$).  If $p$ is any prime satisfying $p \equiv 1$ (mod 7), then there are infinitely many values of $c = -\frac{A(m,n)}{B(m,n)}$ for which $p \mid A(m,n)$ and $f_c(x)$ has a rational $3$-cycle.
\label{thm:3}
\end{thm}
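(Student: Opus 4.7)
The plan is to prove the three assertions in sequence, relying on the norm interpretation $A(m,n) = \textsf{Norm}_{\mathbb{Q}(\zeta)/\mathbb{Q}}(m - n\theta)$ with $\theta = \zeta+\zeta^2$ from Theorem~\ref{thm:2}(b), together with the representation $A(m,n) = X^2 + 7Y^2$ from~(\ref{eqn:5}), where $X = m^3+m^2n-2mn^2-n^3$ and $Y = mn(m+n)$.

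For the structural formula $A(m,n) = 7^a(14b+1)$: $A$ is odd by the argument in the proof of Theorem~\ref{thm:1}. The mod~$7$ behaviour follows from the polynomial identity
\[
 f_\theta(x) - (x-2)^6 = 7\bigl(2x^5 - 8x^4 + 24x^3 - 33x^2 + 28x - 9\bigr),
\]
which reflects the fact that $\theta \equiv 2 \pmod{(1-\zeta)}$ and that $7$ is totally ramified in $\mathbb{Q}(\zeta)$. Homogenizing gives $A(m,n) \equiv (m-2n)^6 \pmod 7$, which by Fermat is $0$ or $1 \pmod 7$. If $7 \nmid (m-2n)$ then $a=0$, $A \equiv 1 \pmod 7$, and combined with oddness $A = 14b+1$. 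Otherwise $m \equiv 2n \pmod 7$; here $(m,n)=1$ forces $7 \nmid n$, a direct substitution yields $X \equiv 7n^3 \equiv 0$ and $Y \equiv 6n^3 \pmod 7$, so writing $X = 7X'$ gives $A = 7(Y^2 + 7X'^2)$. Thus $7 \mid A$ but $49 \nmid A$ (reproving Remark~3), and $A/7 \equiv Y^2 \equiv n^6 \equiv 1 \pmod 7$, giving $a=1$.

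For the second assertion, let $p \neq 7$ divide $A(m,n)$ and choose $\mathfrak{p}$ of $\mathbb{Z}[\zeta]$ above $p$ with $\mathfrak{p} \mid m - n\theta$. Since $\theta = \zeta(1+\zeta)$ is a unit in $\mathbb{Z}[\zeta]$ (note $N(1+\zeta) = \Phi_7(-1) = 1$), $p \mid n$ would force $p \mid m$, contradicting $(m,n)=1$. Hence $p \nmid n$ and $\bar\theta \equiv mn^{-1} \pmod{\mathfrak{p}}$ lies in $\mathbb{F}_p \subset \mathbb{Z}[\zeta]/\mathfrak{p}$. Applying the Frobenius relation $\bar\theta^p = \bar\theta$ to $\bar\theta = \bar\zeta + \bar\zeta^2$ and factoring yields
\[
 (\bar\zeta^p - \bar\zeta)(1 + \bar\zeta^p + \bar\zeta) = 0.
\]
Vanishing of the first factor gives $\bar\zeta^{p-1}=1$, hence $p \equiv 1 \pmod 7$. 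Vanishing of the second factor with $a \equiv p \pmod 7$ would force $\mathfrak{p} \mid 1+\zeta+\zeta^a$ and therefore $p \mid N(1+\zeta+\zeta^a)$; a short resultant computation against $\Phi_7$ shows that this norm equals $1$ for $a \in \{2,4,6\}$ and $8$ for $a \in \{3,5\}$, so the only possibility would be $p=2$ with $a \in \{3,5\}$, contradicting $2 \equiv 2 \pmod 7$. I expect this step to be the main obstacle: a direct appeal to Dedekind's theorem on $f_\theta(x) \pmod p$ fails when $p$ divides the index $[\mathbb{Z}[\zeta] : \mathbb{Z}[\theta]]$ (for instance $f_\theta(x) \equiv (x^3+x+1)^2 \pmod 2$), and the Frobenius-plus-norm argument above is the cleanest way to bypass the index.

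For the final assertion, fix $p \equiv 1 \pmod 7$. Then $p$ splits completely in $\mathbb{Z}[\zeta]$, so there is a prime $\mathfrak{p}$ above $p$ with residue field $\mathbb{F}_p$; let $a_0 \in \mathbb{Z}$ be a lift of $\theta \bmod \mathfrak{p}$. Then $f_\theta(a_0+kp) \equiv f_\theta(a_0) \equiv 0 \pmod p$ for every $k \in \mathbb{Z}$, so $p \mid A(a_0+kp,\,1)$. Since $f_\theta(0) = f_\theta(-1) = 1$, the residue $a_0 \pmod p$ equals neither $0$ nor $-1$, so every pair $(m,n)=(a_0+kp,\,1)$ satisfies $mn(m+n) \neq 0$ and $\gcd(m,n)=1$. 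Because each value of $c = -A(m,n)/B(m,n)$ has only finitely many preimages in the $(m,n)$-plane (as noted after Conjecture~\ref{conj:1}, either by Thue--Siegel--Roth or directly from the representation $A = X^2 + 7Y^2$), this infinite family produces infinitely many distinct values of $c$ with $p \mid A(m,n)$ for which $f_c(x)$ has a rational $3$-cycle.
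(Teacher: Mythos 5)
Your proposal is correct, and in its first two parts it takes a genuinely different route from the paper. For the shape $A(m,n)=7^a(14b+1)$, the paper factors $m-n(\zeta+\zeta^2)=(1-\zeta)^a\gamma$ in $\mathbb{Z}[\zeta]$ (using $7=\varepsilon(1-\zeta)^6$) and invokes the standard fact that norms of elements prime to $1-\zeta$ are $\equiv 1 \pmod 7$; your congruence $f_\theta(x)\equiv (x-2)^6 \pmod 7$ combined with the $X^2+7Y^2$ identity is a more elementary substitute that has the added benefit of showing $a\le 1$ directly (which the paper only records in Remark~3, by a separate argument). For the statement that every prime factor $p\ne 7$ of $A(m,n)$ is $\equiv 1\pmod 7$, the paper passes through the Gerst--Brillhart theorem that $f_\theta$ and $\Phi_7$ have the same prime divisors up to the exceptional prime $2$ (precisely because $2$ divides the index $[\mathbb{Z}[\zeta]:\mathbb{Z}[\theta]]=8$), together with Washington's Prop.~2.10; your Frobenius computation $(\bar\zeta^p-\bar\zeta)(\bar\zeta^p+\bar\zeta+1)=0$ plus the norm evaluations $\textsf{N}(1+\zeta+\zeta^a)\in\{1,8\}$ is self-contained and sidesteps the index issue entirely, which you correctly identify as the main pitfall of the naive Dedekind approach. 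Two very minor points you leave implicit: in the first-factor case you need $\bar\zeta\ne 1$ (immediate from $p\ne 7$ since $\textsf{N}(1-\zeta)=7$) to conclude that $\bar\zeta$ has order exactly $7$; and the case $a\equiv p\equiv 1\pmod 7$ of the second factor should be noted as already yielding the desired conclusion. The final assertion is proved essentially as in the paper (the paper uses pairs $(kn+lp,n)$, you specialize to $n=1$), and you make explicit the finiteness-of-fibers step that the paper leaves tacit.
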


\begin{proof} It is well-known that the ring $\textsf{R}$ of integers in the field $F = \mathbb{Q}(\zeta)$ has unique factorization and that $7 = \varepsilon (1-\zeta)^6$ is a power of the prime $\pi = 1-\zeta$ in $\textsf{R}$, where $\varepsilon \in \textsf{R}^\times$ is a unit.  See \cite[p. 2]{wa} for the latter fact and \cite[pp. 570, 590]{h} for unique factorization.  \medskip

Noting (\ref{eqn:4c}) and factoring the integer $m-(\zeta+\zeta^2)n$ in $\textsf{R}$ gives
$$m-(\zeta+\zeta^2)n = (1-\zeta)^a \gamma, \ \ \gamma \in \textsf{R}, \ a \ge 0,$$
where $\gamma$ is relatively prime to $1-\zeta$.  Writing $\textsf{N}$ for $\textsf{Norm}_{\mathbb{Q}(\zeta)/\mathbb{Q}}$ gives
$$\textsf{N}(m-(\zeta+\zeta^2)n) = \textsf{N}((1-\zeta)^a) \textsf{N}(\gamma) = 7^a \textsf{N}(\gamma).$$
It is well-known that norms to $\mathbb{Q}$ of elements of $\textsf{R}$ prime to $7$ are congruent to $1$ (mod $7$).  (See \cite[Ch. 2]{wa}.)  This can be seen easily as follows, using the fact that $\zeta \equiv 1$ (mod $\pi$) and that powers of $\zeta$ form an integral basis of $\textsf{R}$.  If $\gamma = \sum_{i=1}^6{a_i \zeta^i}$, with $a_i \in \mathbb{Z}$, then letting $A = \sum_{i=1}^6{a_i}$ gives that
\begin{equation*}
\textsf{N}(\gamma) = \prod_{j=1}^6{(\sum_{i=1}^6{a_i \zeta^{ij}})} \equiv \prod_{j=1}^6{(\sum_{i=1}^6{a_i})} = A^6  \equiv 1 \ (\textrm{mod} \ \pi)
\end{equation*}
in $\textsf{R}$, since $A^6 \equiv 1$ (mod $7$).  Now, $\textsf{N}(\gamma) \equiv 1$ (mod $\pi$) implies that $\textsf{N}(\gamma) \equiv 1$ (mod $7$).  Since $\textsf{N}(\gamma)$ is positive and odd, this proves the first assertion.  \medskip

To prove the second assertion, let $p \mid A(m,n)$, where $(m,n) = 1$.  Then $p \nmid n$ and $m/n \equiv k$ (mod $p$), for some $k \in \mathbb{Z}$.  Recalling the polynomial $f_\theta(x)$ from the proof of Theorem \ref{thm:2}, it follows that
$$A(m,n) = n^6 f_\theta(m/n) \equiv n^6 f_\theta(k) \ (\textrm{mod} \ p),$$
so that $p \mid f_\theta(k)$.  This shows that $p$ is a so-called {\it prime divisor} of the polynomial $f_\theta(x)$, since $p$ divides a value of $f_\theta(x)$, for some $x \in \mathbb{Z}$.  See \cite{gb}.  Now note that $\mathbb{Q}(\theta) = \mathbb{Q}(\zeta)$ and $\textrm{disc}(f_\theta(x)) = -2^6 7^5$.  It follows from \cite[Theorem 2, Lemma 1]{gb} that the prime divisors of $f_\theta(x)$ are the same as the prime divisors of the $7$-th cyclotomic polynomial
$$\Phi_7(x) = x^6+x^5+x^4+x^3+x^2+x+1,$$
with the possible exception of the prime $p = 2$.  However, it is clear that $2$ is not a prime divisor of either polynomial, so these two polynomials have exactly the same set of prime divisors.  Now Prop. 2.10 of \cite{wa} shows that $p \neq 7$ is a prime divisor of $f_\theta(x)$ if and only if $p \equiv 1$ (mod $7$).  If $k \in \mathbb{Z}^+$ is a root of $f_\theta(x) \equiv 0$ (mod $p$), then there are infinitely many pairs $(m,n)$ satisfying $m \equiv kn$ (mod $p$) and $\gcd(m,p) = \gcd(n,p) =\gcd(m,n) = 1$, with $mn(m+n) \neq 0$.  For example, choose an integer $n > 0$ for which $p \nmid n$ and let $m = kn + lp$ for any integer $l > 0$ with $(l,n) = 1$.  This gives that $p \mid A(m,n)$ by the above congruence.
\end{proof}

\begin{thm} The smallest possible value of the numerator $A(m,n)$ in the representation
$$c = -\frac{A(m,n)}{B(m,n)}$$
is $A(m,n) = 29$, and this numerator occurs only for $c = -\frac{29}{16}$.
\label{thm:4}
\end{thm}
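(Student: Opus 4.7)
The plan is to combine the norm-form identity $A(m,n)=x^2+7y^2$ from equation (\ref{eqn:5}) with the two elementary size constraints on $x$ and $y$ that are already available. Write $x = m^3+m^2n-2mn^2-n^3$ and $y = mn(m+n)$. By Remark~2 following Theorem~\ref{thm:2}, the cubic form giving $x$ is (up to sign) the norm from the cyclic cubic field $\mathbb{Q}(\theta)$, hence irreducible over $\mathbb{Q}$; since $(m,n)\neq(0,0)$ this forces $x \neq 0$, so $|x|\ge 1$. Separately, the hypothesis $mn(m+n)\neq 0$ together with $\gcd(m,n)=1$ gives $|y|\ge 2$: if $|y|=|mn(m+n)|=1$, then $|m|=|n|=|m+n|=1$, but $m,n\in\{\pm1\}$ forces $m+n\in\{0,\pm2\}$, a contradiction.

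These two lower bounds plug into the norm representation to yield
\[
A(m,n)=x^2+7y^2\;\ge\;1+7\cdot 4\;=\;29,
\]
so the numerator is at least $29$. To realize equality, I would check that $(m,n)=(1,1)$ gives $A(1,1)=1+2+4+8+9+4+1=29$, and that the associated $c$-value is $y_1(1)=-\tfrac{29}{16}$, matching the calculation at the end of the proof of Theorem~\ref{thm:1}.

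Finally, the uniqueness assertion---that $A(m,n)=29$ forces $c=-\tfrac{29}{16}$---follows immediately from Theorem~\ref{thm:2}(e) applied with the prime power $q^e=29$ (note $q=29\neq 7$): that part of Theorem~\ref{thm:2} says the rational number $c=-A(m,n)/B(m,n)$ is the only one with the given prime-power numerator for which $f_c(x)$ has a rational $3$-cycle. So no further casework is needed.

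There is no real obstacle here: the entire argument is a one-line inequality $x^2+7y^2\ge 1+28$ plus a citation to Theorem~\ref{thm:2}(e). The only point that deserves a sentence of care is the lower bound $|y|\ge 2$, which uses the coprimality of $m$ and $n$ in an essential way to exclude $|y|=1$.
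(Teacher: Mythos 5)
Your proof is correct, and for the minimality part it takes a genuinely different and more elementary route than the paper. The paper first invokes Theorem \ref{thm:3} (the factorization $A(m,n) = 7^a(14b+1)$ with all prime factors of $14b+1$ congruent to $1$ mod $7$) to reduce the problem to excluding the values $A=1$ and $A=7$, and then rules these out by a unit computation in $\mathbb{Q}(\zeta_7)$: writing any unit as $\zeta^a\varepsilon_0$ with $\varepsilon_0$ in the real subfield $F^+$ and equating coefficients in the basis $\{1,\zeta,\dots,\zeta^5\}$ for each residue $a$ modulo $7$. You bypass all of that by reading the lower bound directly off the identity $A(m,n) = x^2 + 7y^2$ of equation (\ref{eqn:5}): your observations that $x = m^3+m^2n-2mn^2-n^3 \neq 0$ (since the cubic form is a norm form for the degree-$3$ field $\mathbb{Q}(\theta)$ and so has no nontrivial rational zeros --- the same fact the paper uses inside the proof of Theorem \ref{thm:2}(e)) and that $|y| = |mn(m+n)| \ge 2$ (your coprimality argument is fine; alternatively, Theorem \ref{thm:2}(a) already records that $mn(m+n)$ is even) give $A \ge 1 + 7\cdot 4 = 29$ in one line. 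What the paper's longer route buys is the extra structural information that every prime factor of $A(m,n)/7^a$ is $\equiv 1 \pmod 7$ and (via the Remark after Theorem \ref{thm:4}) that $A(m,n)$ is never a power of $7$, facts used elsewhere; what your route buys is brevity and independence from the unit structure of $\mathbb{Q}(\zeta_7)$. For the uniqueness of $c = -\frac{29}{16}$ you cite Theorem \ref{thm:2}(e) exactly as the paper does, so that part coincides.
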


\begin{proof} First note that $29$ is the smallest prime number which is congruent to 1 (mod $7$).  Theorem \ref{thm:3} implies that the smallest possible numerator $A(m,n)$ is either $1, 7$ or $29$, so we just have to exclude $1$ and $7$ as possibilities.  By Theorem \ref{thm:2}(b) and the first part of the proof of Theorem \ref{thm:3}, these are only possible if
$$\textsf{N}(m-n(\zeta+\zeta^2)) = 1 \ \textrm{or} \ 7.$$
This happens if and only if
\begin{equation}
m-n(\zeta+\zeta^2) = \varepsilon \ \ \textrm{or} \ \ m-n(\zeta+\zeta^2) = \varepsilon (1-\zeta), \ \ \varepsilon \in \textsf{R}^\times,
\label{eqn:6}
\end{equation}
for some unit $\varepsilon$.  We need the fact that the units in $F = \mathbb{Q}(\zeta)$ are generated by $\zeta$ and the units in the real cubic subfield $F^+ = \mathbb{Q}(\zeta+\zeta^{-1})$.  (See \cite[Prop. 1.5]{wa}.)  Thus, we can write $\varepsilon = \zeta^a \varepsilon_0$, with $\varepsilon_0 \in F^+$ and $a \in \mathbb{Z}$.  To show that (\ref{eqn:6}) is impossible, it suffices to show that for any integer $a$, with $0 \le a \le 6$, we have
\begin{equation}
\zeta^{-a} (m-n(\zeta+\zeta^2)) \notin F^+ \ \ \textrm{or} \ \ \frac{m-n(\zeta+\zeta^2)}{\zeta^a(1-\zeta)} \notin F^+,
\label{eqn:7}
\end{equation}
respectively.  First, assume the negation of the first statement in (\ref{eqn:7}).  Then taking the complex conjugate gives
$$\zeta^{-a} (m-n(\zeta+\zeta^2)) = \zeta^a(m-n(\zeta^5+\zeta^6)),$$
or
$$m-n \zeta -n\zeta^2 = \zeta^{2a}(m-n\zeta^5 -n\zeta^6).$$
Now we use the fact that a basis for $F/\mathbb{Q}$ is $\{1, \zeta, \zeta^2, \zeta^3, \zeta^4, \zeta^5\}$.  We eliminate the possible values of 
$a$ by equating coefficients of a suitable power of $\zeta$.  For example, if $a=1$, we obtain
$$m-n \zeta -n\zeta^2 = -n -n\zeta + m\zeta^2,$$
giving that $m=-n$, which is excluded.  Similarly, for each of the other values of $a$, we conclude that $n=0, m = -n$ or $m=0$, all of which are excluded by Theorem \ref{eqn:2}(a).  This proves the first statement in (\ref{eqn:7}). \medskip

Now assume the negation of the second assertion in (\ref{eqn:7}).  Then
$$\frac{m-n(\zeta+\zeta^2)}{\zeta^a(1-\zeta)} = \frac{m-n(\zeta^5+\zeta^6)}{\zeta^{-a}(1-\zeta^6)}$$
or
$$(m-n(\zeta+\zeta^2))(1-\zeta^6) = \zeta^{2a}(m-n(\zeta^5+\zeta^6))(1-\zeta).$$
Multiplying out and using $\Phi_7(\zeta) = 0$ gives
$$2m+n +m\zeta +(m-n)\zeta^2 +m\zeta^3+m\zeta^4+m\zeta^5 = \zeta^{2a}(m+n -m\zeta-n\zeta^5).$$
Substituting for $a \in \{0, 1, ..., 6\}$ and writing the right side in terms of the basis $\{\zeta^i\}$ implies in each case that $m = 0, n= 0$ or $m=-n$.  For example, if $a=2$, the last equation becomes
$$2m+n +m\zeta +(m-n)\zeta^2 +m\zeta^3+m\zeta^4+m\zeta^5 = -n\zeta^2+(m+n)\zeta^4 -m\zeta^5,$$
giving that $m = 0$.  The other cases are similar.  This proves the second assertion in (\ref{eqn:12}).  Therefore, both cases in (\ref{eqn:6}) are impossible.  \medskip

The assertion about $A(m,n) = 29$ occurring only for $c=-29/16$ is immediate from Theorem \ref{thm:2}(e).
\end{proof}

\noindent {\bf Remark.}
Note that $\zeta+\zeta^2$ is a unit in $\mathbb{Q}(\zeta)$.  By Remark 3 after Theorem \ref{thm:2} and the above proof, the numerator $A(m,n)$ is never a power of $7$.  Thus, $A(m,n)$ is always divisible by at least one prime $p \equiv 1$ (mod $7$).
\medskip

The above results show that the value $c = -29/16$ is minimal, with respect to both the denominator and the numerator, for maps $f_c(x)$ having a rational $3$-cycle.  This shows that $c$ is the rational number of smallest height, for which $f_c(x)$ has a rational $3$-cycle.  We emphasize the important role that the prime $7$ plays in these results, which is also a feature of the discussion in \cite{m}.  The value $c = -29/16$ also plays a prominent role in \cite{p}.  See \cite[Thm. 3]{p}. \medskip

Taking a hint from the numerator of (\ref{eqn:11}), let $\gamma$ be the real root of $x^3-x-1$.  Then $\textrm{disc}(x^3-x-1) = -23$ and
$$\gamma = \frac{1}{6}\left(\sqrt[3]{108+12\sqrt{69}}+\sqrt[3]{108-12\sqrt{69}}\right),$$
and the quantities $-\gamma^2$ and $\gamma - \gamma^2$ have the respective minimal polynomials
$$f_{-\gamma^2}(x) = x^3+2x^2+x+1 \ \ \textrm{and} \ \ f_{\gamma-\gamma^2}(x) = x^3+2x^2+3x+1.$$
Now let $K = \mathbb{Q}(\gamma)$ be the real cubic field of discriminant $d=-23$.  The following theorem shows that the numerators of the rational numbers $x_i(s)$, for $s \in \mathbb{Q}$, satisfy a similar arithmetic condition to the condition on $c$ given by Theorem \ref{thm:2}(b), but with an interesting twist.

\begin{thm}
(a) If $s = \frac{m}{n}$ with $(m,n) = 1$ and $c = -\frac{A(m,n)}{B(m,n)}$, then the $3$-cycle of the map $f_c(x)$ is given by
\begin{align*}
x_1\left(\frac{m}{n}\right) &= \frac{\textsf{N}_{K/\mathbb{Q}}(m+n\gamma^2)}{2mn(m+n)},\\
x_2\left(\frac{m}{n}\right) &= \frac{\textsf{N}_{K/\mathbb{Q}}(m-n\gamma)}{2mn(m+n)},\\
x_3\left(\frac{m}{n}\right) &= -\frac{\textsf{N}_{K/\mathbb{Q}}(m-n(\gamma-\gamma^2))}{2mn(m+n)},
\end{align*}
where $\textsf{N}_{K/\mathbb{Q}}$ denotes the norm to $\mathbb{Q}$ from the real cubic field $K = \mathbb{Q}(\gamma)$ and these rational numbers are in lowest terms. \smallskip

(b) If $s \in \mathbb{Q}$ and $q$ is a prime factor of the numerator of $x_i(s)$, then $q$ is an odd prime satisfying either 
$$(i) \ q =23,$$
$$(ii) \ \left(\frac{-23}{q}\right) = -1,$$
or
$$(iii) \ \left(\frac{-23}{q}\right) = +1 \ \textrm{and} \ q = x^2+23y^2, \ \ x,y \in \mathbb{Z}.$$
If $q$ is any prime satisfying one of these conditions, then there are infinitely many values of $c = -\frac{A(m,n)}{B(m,n)}$ for which $q$ divides the numerator of one of the rational numbers $x_i$ in the rational $3$-cycle of $f_c$.
\label{thm:5}
\end{thm}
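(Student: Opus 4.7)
The plan for part (a) is a direct substitution and identification. Substituting $s = m/n$ with $\gcd(m,n)=1$ into (\ref{eqn:8}), (\ref{eqn:11}), (\ref{eqn:12}) and clearing fractions gives each $x_i(m/n)$ as a ratio with denominator $2mn(m+n)$ and numerator equal to a cubic form in $m,n$. Each such numerator is $n^3 h(m/n)$, where $h$ is the minimal polynomial of $-\gamma^2$, $\gamma$, and $\gamma-\gamma^2$ respectively (using the minimal polynomials quoted just before the theorem statement). Since $n^3 h(m/n) = \textsf{N}_{K/\mathbb{Q}}(m - n\alpha)$ whenever $h$ is the minimal polynomial of $\alpha \in K$, the numerators are the asserted norms. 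To see the fractions are in lowest terms, observe that any prime $p$ dividing the numerator and $mn(m+n)$ forces $p\mid \gcd(m,n)$: substituting $p\mid m$ (respectively $p\mid n$, $p\mid m+n$) into the numerator reduces it to $\pm n^3$ (respectively $\pm m^3$); and an explicit parity check on all $(m,n) \bmod 2$ shows the numerator is always odd.

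For the forward direction of part (b), suppose $q$ is a prime dividing the numerator of some $x_i(m/n)$. The oddness of $q$ follows from the parity check in (a). Since $q\nmid n$ (otherwise $q\mid m$), reducing $k\equiv m/n\pmod{q}$ produces a root of the relevant cubic $h$ modulo $q$. Because each $h$ generates the same cubic field $K$ over $\mathbb{Q}$, this is equivalent to $f(x) = x^3-x-1$ having a root modulo $q$, i.e.\ to the existence of a prime $\mathfrak{q}$ of $\mathcal{O}_K = \mathbb{Z}[\gamma]$ over $q$ of residue degree $1$. Such primes are classified using $\mathrm{disc}(f) = -23$: for $q = 23$, direct computation gives $f(x) \equiv (x-3)(x-10)^2 \pmod{23}$, yielding (i); for $q\neq 23$ unramified, the parity of the number of irreducible factors of $f \bmod q$ is governed by $\left(\frac{-23}{q}\right)$, so $\left(\frac{-23}{q}\right) = -1$ forces the factorization (linear)(quadratic), giving (ii). If $\left(\frac{-23}{q}\right) = +1$, then $f$ is either irreducible or splits completely, and a degree-$1$ prime exists precisely in the second case; this occurs iff $q$ splits completely in the Galois closure $\tilde K$ of $K$. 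Since $\tilde K$ is the Hilbert class field of $\mathbb{Q}(\sqrt{-23})$, complete splitting is equivalent to $q$ being the norm of a principal prime of $\mathcal{O}_{\mathbb{Q}(\sqrt{-23})}$, and for odd $q$ this translates to $q = x^2 + 23y^2$, giving (iii).

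For the converse, proceed as in the second half of the proof of Theorem \ref{thm:3}. Given $q$ satisfying (i), (ii), or (iii), the polynomial $f$ has a root $k \bmod q$, hence so does each of the three norm polynomials. Choose any $n\in\mathbb{Z}^+$ coprime to $q$ and any $l\in\mathbb{Z}^+$ with $\gcd(l,n)=1$, and set $m = kn + lq$. Then $\gcd(m,n)=1$, $mn(m+n)\neq 0$, and $q$ divides the numerator of the corresponding $x_i$. Since each value of $c$ arises from at most six pairs $(m,n)$ (the $\beta$-orbit from (\ref{eqn:10a})), varying $(n,l)$ produces infinitely many distinct values of $c$. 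The main obstacle in this proof is the class-field-theoretic step for case (iii): identifying $\tilde K$ as the Hilbert class field of $\mathbb{Q}(\sqrt{-23})$ and translating ``splits completely in $\tilde K$'' into the representation $q = x^2 + 23y^2$. The prime $q=2$ is excluded by the parity argument, and the ramified prime $q=23$ is handled by a direct factorization of $f \bmod 23$.
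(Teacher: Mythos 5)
Your proposal is correct and follows essentially the same route as the paper: part (a) by direct substitution plus the lowest-terms argument of Theorem \ref{thm:1}, and part (b) via the Pellet--Stickelberger parity criterion, Dedekind's theorem (valid for all $q$ since the cubics have discriminant $-23$ equal to the field discriminant), the Hilbert class field characterization of $q = x^2+23y^2$, and the same $m \equiv kn \pmod q$ construction for the converse. The only cosmetic difference is that you phrase the forward direction as ``$q$ divides the numerator $\Rightarrow$ the cubic has a root mod $q$ $\Rightarrow$ a degree-$1$ prime lies over $q$,'' whereas the paper reaches the same trichotomy by classifying the splitting types of primes in $\textsf{R}_K$ and invoking primitivity of $m+n\gamma^2$, $m-n\gamma$, $m-n(\gamma-\gamma^2)$; the underlying facts used are identical.
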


\begin{proof}
The formulas in (a) follow directly from (\ref{eqn:8}), (\ref{eqn:11}) and (\ref{eqn:12}) and the minimal polynomials given above.  The fact that these formulas give the values of the $x_i$ in lowest terms follows exactly as in the proof of Theorem \ref{thm:1}.  To prove (b), we use the fact that the field $K$ has discriminant $-23$ and class number $1$, so its ring of integers $\textsf{R}_K$ has unique factorization.  Moreover, $\Sigma = K(\sqrt{-23})$ is the Hilbert class field (see \cite[pp. 94-95]{c}) of the quadratic field $\mathbb{Q}(\sqrt{-23})$.  Note that $\{1, \gamma, \gamma^2\}$ is an integral basis for $\textsf{R}_K/\mathbb{Z}$.  Hence the numbers $m+n\gamma^2, m-n(\gamma-\gamma^2), m-n\gamma$ are all primitive, meaning that they are not divisible by any rational prime $q$.  Otherwise $q$ would divide $\gcd(m,n)$.  \medskip

Now the primes $\pi$ in $\textsf{R}_K$ come in several varieties. \smallskip

\noindent 1. The prime $23$ factors in $\textsf{R}_K$ as
$$23 = \varepsilon (\gamma-3)(2\gamma+3)^2, \ \ \varepsilon = \gamma-\gamma^2 \in \textsf{R}_K^\times,$$
and is divisible by two primes, both of whose norms are $\pm 23$. \smallskip

\noindent 2.  If $\left(\frac{-23}{q}\right) = -1$, then the rational prime $q$ factors into two primes $q = \pi_1 \pi_2$, where $\textsf{N}(\pi_1) = \pm q$ and $\textsf{N}(\pi_2) = \pm q^2$. \smallskip

\noindent 3. If $\left(\frac{-23}{q}\right) = +1$ and $q = x^2+23y^2$ for some $x, y \in \mathbb{Z}$, then $q = \pi_1 \pi_2 \pi_3$ factors into three primes, each with norm $\textsf{N}(\pi_i) = \pm q$.  \smallskip

\noindent 4. Finally, if $\left(\frac{-23}{q}\right) = +1$ and $q \neq x^2+23y^2$, for any $x, y \in \mathbb{Z}$, then $q$ remains prime in $\textsf{R}_K$. \medskip

The connection with the Legendre symbol can be seen as follows.  A theorem of Pellet-Stickelberger-Voronoi (originally proved by Pellet but usually attributed just to Stickelberger \cite[p. 485]{h}) says that if $q \neq 23$ and the polynomial $f_\gamma(x) = x^3 -x-1$ has $r_q$ irreducible factors modulo $q$, then
$$\left(\frac{d}{q}\right) = \left(\frac{-23}{q}\right) = (-1)^{3+r_q}.$$
Thus, $f_\gamma(x)$ has two irreducible factors mod $q$ in Case 2 above and either $1$ or $3$ irreducible factors in Cases 3 and 4.  Then a theorem of Dedekind \cite[Prop. 2.14]{wa}, together with the fact that all ideals in $\textsf{R}_K$ are principal, shows that $q$ factors into primes in the same way that $f_\gamma(x)$ factors into irreducibles mod $q$.  If $f_\gamma(x)$ is a product of three linear factors (mod $q$), then $q$ is a product of three primes and Case 3 holds; while if $f_\gamma(x)$ is irreducible (mod $q$), then $q$ is a prime in $\textsf{R}_K$ and Case 4 holds. \medskip

The connection of the above facts with the quadratic form $x^2+23y^2$ follows from a theorem of complex multiplication.  In Case 3 above, the prime ideal $(q)$ of $\mathbb{Z}$ splits completely in $K$ and in $\mathbb{Q}(\sqrt{-23})$, and therefore splits completely in the composite field $\Sigma = K \cdot \mathbb{Q}(\sqrt{-23})$.  Hence, by the defining property of the Hilbert class field \cite[p. 98]{c}, the prime ideal factors of $(q)$ in $\mathbb{Q}(\sqrt{-23})$ must be principal, which implies that the norm of some integer in the field equals $q$, i.e. $x^2+23y^2 = q$.  The converse also holds, and this verifies the distinction between Cases 3 and 4.  See \cite[pp. 88, 98]{c}. \medskip

Now the primitivity of the algebraic integers $m+n\gamma^2, m-n(\gamma-\gamma^2), m-n\gamma$ implies that none can be divisible by a prime $q$ satisfying point 4.  Hence the prime divisors $q$ of the numerators of the $x_i(m/n)$ satisfy (i), (ii), or (iii).  Conversely, if $q$ is a prime satisfying (i), (ii) or (iii), then the polynomial $f_\gamma(x)$ has a linear factor $x-k$ (mod $q$). If $m \equiv kn$ (mod $q$) with $(m,q) = (n,q) = (m,n) = 1$, as in the last paragraph of the proof of Theorem \ref{thm:3}, it follows that $f_\gamma(m/n) \equiv 0$ (mod $q$).  Hence, $q$ divides the numerator of $x_2(m/n)$ for infinitely many pairs $(m,n)$.
\end{proof}

Note that the denominators of the rational numbers $x_i(m/n)$ in Theorem \ref{thm:5} are all $4C(m,n)$, where $C(m,n)$ is defined in Theorem \ref{thm:2}(a). \medskip

\noindent {\bf Examples.} For $c=-\frac{29}{16}$, the rational $3$-cycle is $\{-\frac{7}{4}, -\frac{1}{4}, \frac{5}{4}\}$, where
$$\left(\frac{-23}{5}\right) = \left(\frac{-23}{7}\right) = -1.$$
For $c = -\frac{43^2}{24^2}$, the rational $3$-cycle is $\{\frac{23}{24}, -\frac{55}{24}, \frac{49}{24}\}$, where
$$\left(\frac{-23}{11}\right) = -1.$$
For $c = -\frac{71 \cdot 2311}{2^8 \cdot 7^2}$, the rational $3$-cycle is
$$\left\{\frac{5 \cdot 67}{112}, -\frac{463}{112}, \frac{449}{112}\right\},$$
where
$$\left(\frac{-23}{67}\right) = -1, \ \left(\frac{-23}{463}\right) = \left(\frac{-23}{449}\right) = +1;$$
and
$$463 = 2^8 +23 \cdot 3^2, \ \ 449 = 3^4 + 23 \cdot 4^2.$$
We will show below in Theorems \ref{thm:13}, \ref{thm:14} and Corollary \ref{cor:4} that these $3$-cycles are the only rational cycles for their respective maps $f_c(x)$.  The proof of Theorem \ref{thm:14} shows that if $5 \nmid B(m,n)$, then $c = -\frac{A(m,n)}{B(m,n)} \equiv 1$ (mod $5$).  In this case $5$ always divides the numerator of one of the elements $x_i(s)$ of the $3$-cycle, as is illustrated by the above examples.  Note that the primes $3, 13, 29$ never occur as factors of the numerators of the $3$-periodic points $x_i$, by Theorem \ref{thm:5}(b), since $\left(\frac{-23}{3}\right) = \left(\frac{-23}{13}\right) = \left(\frac{-23}{29}\right) = +1$ and $3, 13, 29$ are not expressible in the form $x^2+23y^2$.  See Figure \ref{fig:5} below for a list of examples for small values of $m$ and $n$.

\begin{figure}[!htb]
\begin{center}
\includegraphics[width=150mm,scale=2.50]{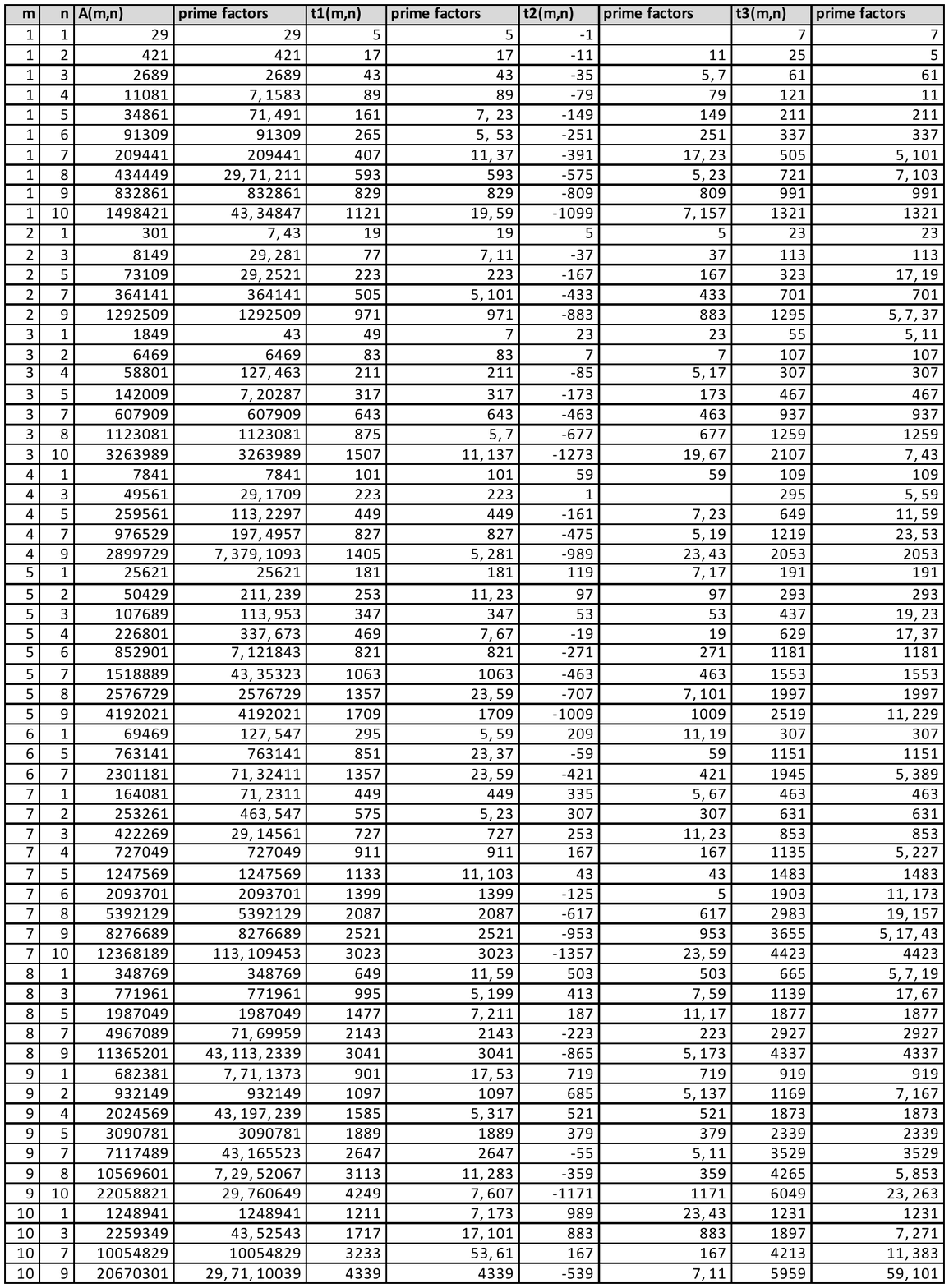}
\end{center}
\caption{\label{fig:5} Prime factors of numerators of $c=-\frac{A(m,n)}{B(m,n)}$ and $x_i(m,n) = \pm \frac{t_i(m,n)}{4C(m,n)}$ for small $m$ and $n$.  (See eqs. (\ref{eqn:20})-(\ref{eqn:22}).)}
\end{figure}

\newtheorem{cor}{Corollary}

\begin{cor} If $m,n \in \mathbb{Z}$ with $(m,n) = 1$, then we have the following norm equation:
\begin{equation}
\textsf{N}_{K/\mathbb{Q}}(m+n\gamma^2)^2-\textsf{N}_{F/\mathbb{Q}}(m-n(\zeta+\zeta^2)) = 4C(m,n) \textsf{N}_{K/\mathbb{Q}}(m-n\gamma).
\label{eqn:13}
\end{equation}
Here $F = \mathbb{Q}(\zeta)$ is the field of $7$-th roots of unity and $K = \mathbb{Q}(\gamma)$ is the real field generated by a root of $x^3-x-1=0$.
\label{cor:1}
\end{cor}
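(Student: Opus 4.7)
The plan is to recognize that the claimed norm identity is essentially the assertion $f_c(x_1) = x_2$ in disguise, once one substitutes the norm formulas already established for $c$, $x_1$, and $x_2$. Specifically, the elements $\{x_1, x_2, x_3\}$ form a $3$-cycle under $f_c(x)=x^2+c$, and I claim that the labeling in Theorem \ref{thm:5} is arranged so that $f_c(x_1) = x_2$. This can be verified by a single specialization (e.g.\ $s=1$, where $x_1(1) = \tfrac{5}{4}$, $x_2(1) = -\tfrac{1}{4}$, $c = -\tfrac{29}{16}$, and indeed $(5/4)^2 - 29/16 = -1/4$), or more generally by plugging the rational functions $x_1(s), x_2(s), y_1(s)$ from (\ref{eqn:8}), (\ref{eqn:11}), (\ref{eqn:9}) into $x_1(s)^2 + y_1(s)$ and simplifying to $x_2(s)$.

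Granted $f_c(x_1) = x_2$, I would substitute the explicit norm expressions. Writing $D = 2mn(m+n) = 4C(m,n)$ as the common denominator, Theorem \ref{thm:5}(a) gives
\begin{equation*}
x_1 = \frac{\textsf{N}_{K/\mathbb{Q}}(m+n\gamma^2)}{D}, \qquad x_2 = \frac{\textsf{N}_{K/\mathbb{Q}}(m-n\gamma)}{D},
\end{equation*}
while Theorem \ref{thm:2}(b) gives
\begin{equation*}
c = -\frac{\textsf{N}_{F/\mathbb{Q}}(m-n(\zeta+\zeta^2))}{D^2}.
\end{equation*}
Then
\begin{equation*}
x_1^2 + c = \frac{\textsf{N}_{K/\mathbb{Q}}(m+n\gamma^2)^2 - \textsf{N}_{F/\mathbb{Q}}(m-n(\zeta+\zeta^2))}{D^2},
\end{equation*}
and the identity $x_1^2 + c = x_2 = \textsf{N}_{K/\mathbb{Q}}(m-n\gamma)/D$ rearranges exactly to the desired (\ref{eqn:13}) after multiplying through by $D^2$.

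There is really no serious obstacle: the content of the corollary is already encoded in the dynamics together with Theorems \ref{thm:2} and \ref{thm:5}. The only small care needed is to confirm the correct pairing of $x_1 \mapsto x_2$ under $f_c$ (as opposed to $x_1 \mapsto x_3$), which is why I would include the sample check at $s=1$ or, equivalently, observe that the parameter substitution $s \to \psi(s) = -(s+1)/s$ takes $x_1(s)$ to $x_2(s)$ and corresponds to one application of $f_c$ on the cycle.
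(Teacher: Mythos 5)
Your proof is correct and follows exactly the paper's own route: the paper likewise deduces the corollary from the identity $x_1^2(s)+c = x_2(s)$ combined with Theorem \ref{thm:2}(b) and Theorem \ref{thm:5}(a), multiplying through by $(4C(m,n))^2$. Your extra care in confirming the pairing $f_c(x_1)=x_2$ (via the $\psi$-action or a sample value) is a reasonable elaboration of a step the paper takes for granted.
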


\begin{proof} This follows from $x_1^2(s)+c = x_2(s)$, Theorem \ref{thm:2}(b) and Theorem \ref{thm:5}(a), on multiplying through by $(4C(m,n))^2$.
\end{proof}

\noindent {\bf Remark.} It is easy to see, using (\ref{eqn:10}) and the relations
\begin{align}
\label{eqn:14} -m-n+m\gamma^2 &= (\gamma^2-1)(m-n\gamma), \ \ \gamma^2-1 = \gamma^{-1},\\
\notag -m-n-m\gamma &= -(\gamma+1)(m-n(\gamma-\gamma^2)),\\
\label{eqn:15} -n + (m+n)\gamma^2 & = \gamma^2(m-n(\gamma-\gamma^2)),
\end{align}
that the substitution $\beta^2(m,n) = (-m-n,m)$ transforms (\ref{eqn:13}) into
$$\textsf{N}_{K/\mathbb{Q}}(m-n\gamma)^2-\textsf{N}_{F/\mathbb{Q}}(m-n(\zeta+\zeta^2)) = -4C(m,n) \textsf{N}_{K/\mathbb{Q}}(m-n(\gamma-\gamma^2));$$
and $\beta(m,n) = (-n,m+n)$ transforms (\ref{eqn:13}) into
$$\textsf{N}_{K/\mathbb{Q}}(m-n(\gamma-\gamma^2))^2-\textsf{N}_{F/\mathbb{Q}}(m-n(\zeta+\zeta^2)) = 4C(m,n) \textsf{N}_{K/\mathbb{Q}}(m+n\gamma^2).$$
These last two equations are equivalent to $x_i^2(s)+c = x_{i+1}(s)$, for $i = 2, 3$, respectively, where $x_4 = x_1$.  All three of these norm equations are equivalent to polynomial identities, and also hold for $m,n \in \mathbb{Q}$. \medskip

It is curious that the numerators of the values $c = y(t) = y_1(s)$ are norms from the abelian field $\mathbb{Q}(\zeta)$, while the numerators of the elements $x_i = x_i(s)$ of the rational $3$-cycle of $f_c(x)$ are norms from the {\it non-abelian} extension $K$.  Note that the normal closure of $K/\mathbb{Q}$ is the field $\Sigma = K(\sqrt{-23})$, whose Galois group is $\textrm{Gal}(\Sigma/\mathbb{Q}) \cong D_3$. \medskip

\begin{thm}
(a) If $c = -\frac{A(m,n)}{B(m,n)}$, no two of the elements $x_i(s)$ in the rational $3$-cycle of $f_c(x)$ are divisible by the same prime.  In other words, their numerators are relatively prime in pairs.  \smallskip

(b) If $p$ is any prime dividing the numerator of one of the $x_i(s)$, then $c^3+2c^2+c+1 \equiv 0$ (mod $p$). \smallskip

(c) No more than one numerator of an $x_i(s)$ can be $\pm 1$.
\label{thm:6}
\end{thm}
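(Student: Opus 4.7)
The plan is to exploit the explicit numerator formulas implicit in Theorem \ref{thm:5}(a). Setting $s=m/n$ with $\gcd(m,n)=1$ and $mn(m+n)\neq 0$, I would begin by expanding the three norms to obtain the cubic forms
\begin{align*}
t_1 &= m^3+2m^2n+mn^2+n^3 = \textsf{N}_{K/\mathbb{Q}}(m+n\gamma^2),\\
t_2 &= m^3-mn^2-n^3 = \textsf{N}_{K/\mathbb{Q}}(m-n\gamma),\\
t_3 &= m^3+2m^2n+3mn^2+n^3 = \textsf{N}_{K/\mathbb{Q}}(m-n(\gamma-\gamma^2)),
\end{align*}
so that the common denominator of $x_1(s), x_2(s), x_3(s)$ is $2mn(m+n)$, and by Theorem \ref{thm:5}(a) these fractions are already in lowest terms. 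The whole proof then hinges on the six polynomial identities
\begin{align*}
t_1+t_2 &= 2m^2(m+n), & t_1-t_2 &= 2n(m^2+mn+n^2),\\
t_1+t_3 &= 2(m+n)(m^2+mn+n^2), & t_1-t_3 &= -2mn^2,\\
t_2+t_3 &= 2m(m^2+mn+n^2), & t_2-t_3 &= -2n(m+n)^2,
\end{align*}
which I would verify by direct expansion.

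For part (a), I would assume that a prime $p$ divides two of the $t_i$ and note that $p$ must be odd and coprime to $mn(m+n)$, because the corresponding $x_i$'s are in lowest terms with denominator $2mn(m+n)$. Then $p$ divides both the sum and the difference of the pair; for each of the three pairs, one of these two combinations, namely $t_1+t_2$, $t_1-t_3$, or $t_2-t_3$, is $\pm 2$ times a monomial in $m$, $n$, $m+n$, and is therefore coprime to $p$. This contradiction proves (a). For part (b), I would substitute $x=x_i$ into the defining relation $\Phi_3(x_i,c)=0$ from (\ref{eqn:1}) and reduce modulo $p$. The hypothesis $p\mid$ numerator of $x_i$ gives $v_p(x_i)\geq 1$, while $v_p(c)\geq 0$ since the denominator $B(m,n)=16C(m,n)^2$ of $c$ is built from $2$ and primes dividing $mn(m+n)$, all coprime to $p$. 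Every term of $\Phi_3(x_i,c)$ carrying a positive power of $x_i$ therefore vanishes mod $p$, leaving exactly $c^3+2c^2+c+1\equiv 0\pmod p$.

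For part (c), I would suppose $|t_i|=|t_j|=1$ for some $i\neq j$. Then both $t_i+t_j$ and $t_i-t_j$ lie in $\{-2,0,2\}$, and depending on whether $t_i=t_j$ or $t_i=-t_j$, exactly one of them equals $0$. Reading off the identities above, such a vanishing forces one of the factors $m$, $n$, $m+n$, $m^2+mn+n^2$ to be zero; the first three are excluded by hypothesis, and $m^2+mn+n^2=(m+n/2)^2+3n^2/4>0$ unless $m=n=0$, again excluded. I do not anticipate a genuine obstacle in this theorem: once the six identities are in hand, each part reduces to a one-step argument. The only small care required is in part (b), where one must first record that $p$ avoids the denominator of $c$ before passing to the congruence modulo $p$.
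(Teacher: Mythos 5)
Your proof is correct, and parts of it genuinely diverge from the paper's. For part (a) the two arguments are close in spirit: the paper computes resultants such as $\textrm{Res}_m(t_1,t_2)=-8n^9$ and $\textrm{Res}_n(t_1,t_2)=8m^9$, whereas you write out the six combinations $t_i\pm t_j$ and observe that for each pair one of them is $\pm 2$ times a monomial in $m$, $n$, $m+n$; both reduce to the fact that a common prime factor of two numerators would have to divide $2mn(m+n)$, impossible since the $x_i$ are in lowest terms. (Your six identities are exactly the ones the paper records later in the proof of Theorem \ref{thm:10}.) For part (b), the paper's primary argument runs the orbit $\{0,c,c^2+c\}$ modulo $p$ and factors $(c^2+c)^2+c=c(c^3+2c^2+c+1)$, then excludes $c\equiv 0$; your substitution of $x_i$ into $\Phi_3(x,c)=0$, after checking $v_p(c)\ge 0$, is precisely the alternative the paper itself flags with the remark that (b) also follows immediately from (\ref{eqn:1}). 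The real divergence is part (c): the paper argues dynamically from $x_i^2+c=x_{i+1}$, obtaining $1-A(m,n)=\pm 4C(m,n)$ and deriving a contradiction via the third element of the orbit, while you reuse the identities from (a) --- if $|t_i|=|t_j|=1$ then one of $t_i\pm t_j$ vanishes, forcing $mn(m+n)(m^2+mn+n^2)=0$, which is excluded since $m^2+mn+n^2>0$ for $(m,n)\neq(0,0)$. Your route unifies (a) and (c) into one computation and is arguably cleaner than the paper's (c); the paper's route has the side benefit of exhibiting the dynamical relations $x_i^2+c=x_{i+1}$ that are exploited elsewhere (e.g.\ Corollary \ref{cor:1}). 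The only external inputs you rely on --- that the $x_i$ are in lowest terms with common denominator $2mn(m+n)$ --- are supplied by Theorem \ref{thm:5}(a), so the argument is complete.
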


\begin{proof}
The rational numbers in the orbit $\{x_1(s), x_2(s), x_3(s)\}$ of $f_c(x)$ are distinct and have the same denominator, so at most two of them can have a numerator equal to $\pm 1$.  It follows that at least one of these numerators is divisible by an odd prime $p$.  A prime $p$ cannot divide the numerators of two of the $x_i(s)$, because $p$ does not divide the resultant
\begin{equation*}
\textrm{Res}(s^3+2s^2+s+1,s^3-s-1) = -8.\\
\end{equation*}
See equations (\ref{eqn:8}) and (\ref{eqn:11}).  This implies, for example that
\begin{align*}
\textrm{Res}_m&(m^3+2m^2n+mn^2+n^3,m^3-mn^2-n^3) = -8n^9,\\
\textrm{Res}_n&(m^3+2m^2n+mn^2+n^3,m^3-mn^2-n^3) = 8m^9.
\end{align*}
The same calculation applies to the resultants of the other combinations of numerators, or can be deduced from this using the substitutions $\beta^2(m,n) = (-m-n,m)$ and $\beta(m,n) = (-n,m+n)$.  Furthermore, if some $x_i(s) \equiv 0$ (mod $p$), then the orbit is $\{0, c, c^2+c\}$ (mod $p$), which implies that $c$ satisfies
$$(c^2+c)^2+c \equiv c(c^3+2c^2+c+1) \equiv 0 \ (\textrm{mod} \ p).$$
If $c \equiv 0$ (mod $p$), then $0$ would be a fixed point (mod $p$), contradicting what we have just shown.  Therefore, $c \not \equiv 0$ (mod $p$) and $p \mid c^3+2c^2+c+1$.  This proves (a) and (b).  Note that (b) also follows immediately from (\ref{eqn:1}). \medskip

To prove (c), assume $x_i(s)$ has numerator equal to $\pm 1$ for two consecutive values of $i$ (mod $3$).  Then $x_i^2(s)+c = x_{i+1}(s)$ implies, on multiplying through by $4^2C(m,n)^2$, that
$$1-A(m,n) = \pm 4C(m,n).$$
But $x_{i+1}^2(s)+c = x_{i-1}(s)$ cannot also have numerator $\pm 1$, which shows that $1-A(m,n)$ must have a prime factor which does not divide $4C(m,n)$.  Thus, at most one $x_i(s)$ can have numerator $\pm 1$.
\end{proof}

\noindent {\bf Example.} The map $f(x) = x^2-\frac{29 \cdot 1709}{2^6 \cdot 3^2 \cdot 7^2}$, corresponding to $(m,n) = (4,3)$ in the table in Figure \ref{fig:5}, has the $3$-cycle
$$\left\{\frac{1}{168}, \frac{-5 \cdot 59}{168}, \frac{223}{168}\right\},$$
and
$$c^3+2c^2+c+1 = -\frac{5 \cdot 59 \cdot 223 \cdot 1222801}{2^{18} \cdot 3^6 \cdot 7^6}.$$ \smallskip

We now show that the map in this example shares a uniqueness property with the map $f_{-29/16}(x)$.

\newtheorem{lem}{Lemma}

\begin{lem}
(a) The only values of $m,n \in \mathbb{Z}$, for which $\textsf{N}_{K/\mathbb{Q}}(m+n\gamma^2) = \pm 1$ are $(m,n) = (\pm 1, 0), (0,\pm 1)$, $\pm (-1,1), \pm(2,-1), \pm(-7, 4)$. \smallskip

(b) The only values of $m,n \in \mathbb{Z}$, for which $\textsf{N}_{K/\mathbb{Q}}(m+n\gamma^2) = \pm 5$ are $(m,n) = \pm (1,1), \pm (-3,2)$. \smallskip

(c) The only values of $m,n \in \mathbb{Z}$, for which $\textsf{N}_{K/\mathbb{Q}}(m+n\gamma^2) = \pm 7$ are $(m,n) = \pm (-1,2), \pm (-5,3)$.
\label{lem:1}
\end{lem}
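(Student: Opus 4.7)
The plan is to use that $\textsf{R}_K = \mathbb{Z}[\gamma]$ has class number one (as noted in the proof of Theorem \ref{thm:5}) and that $\gamma$ is a fundamental unit of $K$, so $\textsf{R}_K^{\times} = \{\pm\gamma^k : k \in \mathbb{Z}\}$. Since the minimal polynomial of $-\gamma^2$ is $x^3+2x^2+x+1$, direct expansion of $\textsf{N}_{K/\mathbb{Q}}(m-n(-\gamma^2))$ gives
\begin{equation*}
\textsf{N}_{K/\mathbb{Q}}(m+n\gamma^2) = m^3+2m^2n+mn^2+n^3,
\end{equation*}
the cubic Thue form already featured in the introduction. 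Writing $\gamma^k = p_k + q_k\gamma + r_k\gamma^2$ in the integral basis $\{1,\gamma,\gamma^2\}$, the relation $\gamma^3=\gamma+1$ forces each coordinate sequence to satisfy the common recurrence $a_{k+3}=a_{k+1}+a_k$, and the sequence $(q_k)$ is pinned down by the initial values $(q_0,q_1,q_2)=(0,1,0)$.

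For part (a), $\textsf{N}_{K/\mathbb{Q}}(m+n\gamma^2)=\pm 1$ is equivalent to $m+n\gamma^2=\pm\gamma^k$ for some $k\in\mathbb{Z}$, which in coordinates says $q_k=0$ with $(m,n)=\pm(p_k,r_k)$. Tabulating $(q_k)$ forwards from the recurrence and backwards via $q_{k-1}=q_{k+2}-q_{k+1}$ locates $q_k=0$ at exactly $k\in\{-14,-5,-1,0,2\}$ in the sampled range, and the corresponding $(p_k,r_k)$ yield the six pairs listed. For parts (b) and (c), I first produce prime generators of small norm. Since $f_\gamma(x)=x^3-x-1$ has the linear root $2$ mod $5$ and $5$ mod $7$, and since $\left(\frac{-23}{5}\right)=\left(\frac{-23}{7}\right)=-1$, Case 2 of Theorem \ref{thm:5}(b) applies and (by Dedekind's theorem, with class number one making the prime factors principal) $5=\pi_5\pi_5'$ and $7=\pi_7\pi_7'$ with $|\textsf{N}(\pi_5)|=5$ and $|\textsf{N}(\pi_7)|=7$. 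A direct norm check shows one may take $\pi_5=\gamma-2$ and $\pi_7=-1+2\gamma^2$. Hence every element of $\textsf{R}_K$ of norm $\pm 5$ (resp.\ $\pm 7$) has the form $\pm\gamma^k(\gamma-2)$ (resp.\ $\pm\gamma^k(-1+2\gamma^2)$), and the requirement that such an element lie in $\mathbb{Z}\oplus\mathbb{Z}\gamma^2$ reduces to the linear conditions $q_{k+1}=2q_k$ in case (b) and $q_k=2q_{k+2}$ in case (c), imposed on the same sequence $(q_k)$. Tabulation returns $k\in\{-1,5\}$ in (b) and $k\in\{-8,0\}$ in (c), producing the pairs in the statement.

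The main obstacle is ruling out further $k$ beyond those captured by the finite tabulation. For $k\to+\infty$ this is straightforward: $(q_k)$ is strictly positive and nondecreasing from $k=3$ onwards (immediate from the recurrence and the base cases), growing like $A\gamma^k$ with $A>0$, so each of the three vanishing conditions fails for all sufficiently large $k$ and a short direct check covers the small positive cases. For $k\to-\infty$ the two complex conjugate roots $\gamma_2,\bar\gamma_2$ of $x^3-x-1$, with $|\gamma_2|=\gamma^{-1/2}$, dominate the asymptotics of $(q_k)$, and the vanishing conditions become Diophantine approximation statements about $\arg(\gamma_2)$; finiteness of the solution set can be established either effectively via Baker's theorem on linear forms in logarithms, or more elementarily by reducing $(q_k)$ modulo a small prime $p$ of short Pisano-type period and checking that the only zero residues in that period lift to integer $k$ already on the known list. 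The same bounding argument simultaneously handles the two modified linear conditions arising in (b) and (c).
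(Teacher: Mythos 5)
Your reduction is essentially the paper's: class number one, $\gamma$ a fundamental unit, so each norm equation becomes the vanishing of a term of an integer linear recurrence with characteristic polynomial $x^3-x-1$ (your conditions $q_{k+1}=2q_k$ and $2q_{k+2}=q_k$ in (b), (c) are just the paper's sequences $b_k$ attached to $(2-\gamma)\gamma^k$ and $(2+\gamma)\gamma^k$, rewritten as linear combinations of shifts of $(q_k)$, hence satisfying the same recurrence), and the positivity argument disposing of large positive $k$ is also the paper's. The gap is that the decisive step --- proving the finite list of zeros is \emph{complete} for $k\to-\infty$ --- is not carried out, and the method you call ``more elementary'' is insufficient as described. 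Reducing $(q_k)$ modulo a prime $p$ of period $T$ and observing that $q_k\equiv 0\pmod p$ only for $k$ in certain residue classes mod $T$ shows that every integer zero lies in one of those classes; it does \emph{not} show that the known zeros are the only integers in those classes at which the sequence vanishes, so the phrase ``checking that the only zero residues in that period lift to integer $k$ already on the known list'' has no content as stated. What closes this hole is a second-order, $p$-adic non-degeneracy condition: this is exactly the Mignotte--Tzanakis criterion the paper invokes, which in addition to scanning a complete residue system modulo $S=p-1$ requires verifying $\tilde b_{m+S}\not\equiv \tilde b_m \pmod{p^2}$ for each known zero $m$ (a Strassmann-type argument guaranteeing at most one zero of the interpolating $p$-adic analytic function in each residue class). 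The paper performs these computations explicitly with $p=59$, giving the roots and coefficients modulo $59^2$ and the values $\tilde b_{m+58}$. Your alternative via Baker's theorem would in principle be effective, but the resulting bounds are far too large for ``a short direct check,'' and you do not execute it. As written, the proposal verifies that the listed pairs are solutions but does not prove that the lists are exhaustive, which is the entire difficulty of the lemma.
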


\begin{proof}
\noindent (a) We use the fact that the real root $\gamma$ of $x^3-x-1=0$ is a fundamental unit for $K = \mathbb{Q}(\gamma)$ (\cite[p. 519]{coh}).  Now $m+n \gamma^2$ is an algebraic integer, so it can have norm $\pm 1$ if and only if $m+n\gamma^2 = \varepsilon = \pm \gamma^k$ is a unit in $K$, where $k \in \mathbb{Z}$.  It is easy to see by induction that
$$\gamma^k =a_k + b_k \gamma +c_k \gamma^2, \ \ \textrm{where} \ b_k > 0 \ \textrm{for} \ k \ge 3.$$
This follows from $\gamma^3 = 1+\gamma$ and
\begin{align*}
\gamma^{k+1} & = a_k\gamma + b_k \gamma^2 +c_k \gamma^3\\
& = c_k + (a_k + c_k)\gamma + b_k \gamma^2\\
& = a_{k+1}+b_{k+1} \gamma + c_{k+1} \gamma^2;
\end{align*}
so that
$$a_{k+1} = c_k, \ \ b_{k+1} = a_k+c_k, \ \ c_{k+1} = b_k.$$
From this it follows easily that $b_{k+3} = b_{k+1} + b_k$, and then noting the beginning values $b_0 = 0, b_1 = 1, b_2 = 0$ yields the assertion.  Since a basis for $K/\mathbb{Q}$ is $\{1, \gamma, \gamma^2\}$, this shows that $m+n\gamma^2 \neq \pm \gamma^k$, for $k \ge 3$.  Hence, the only solutions of $m+n\gamma^2 = \varepsilon = \pm \gamma^k$ for non-negative $k$ come from $k = 0, 2$. \medskip

If $k = -l <0$, we let $\tilde b_{l} = b_{-l}$ for $ \ge 0$.  The recursion above implies that
$$\tilde b_l + \tilde b_{l-1} - \tilde b_{l-3} = 0, \ \ l \ge 0.$$
It follows that
$$\tilde b_l = \alpha_1 \gamma_1^{l}+\alpha_2 \gamma_2^{l} +\alpha_3 \gamma_3^l,$$
where $\gamma_1 = \gamma^{-1}$ and $\gamma_3 = \overline{\gamma}_2$ are the conjugates of $\gamma^{-1}$ and the roots of $x^3+x^2-1 = 0$.  The coefficients $\alpha_i$ are given by
\begin{align*}
\alpha_1 & = \frac{1}{(\gamma_1-\gamma_2)(\gamma_1-\gamma_3)},\\
\alpha_2 & = \frac{1}{(\gamma_2-\gamma_1)(\gamma_2-\gamma_3)},\\
\alpha_3 & = \frac{1}{(\gamma_3-\gamma_1)(\gamma_3-\gamma_2)}.
\end{align*}
We will now prove that $\tilde b_l = 0$ if and only if $l \in \mathcal{M} = \{-2, 0, 1, 5, 14\}$.  To do this we use Theorem 1 of the paper \cite[p. 359]{mt} of Mignotte and Tzanakis.  We choose $p = 59$, which splits completely in the field $K$ and in its normal closure $\Sigma = K(\sqrt{-23}) = \mathbb{Q}(\gamma_1, \gamma_2, \gamma_3)$.  We take a prime ideal $\mathfrak{p}$ of $\Sigma$ dividing $59$ for which
$$\gamma_1 \equiv 15, \ \gamma_2 \equiv 50, \ \gamma_3 \equiv 52 \ (\textrm{mod} \ \mathfrak{p}).$$
This amounts to taking an embedding of $\Sigma$ in the $p$-adic field $\mathbb{Q}_{59}$.  Then
$$\alpha_1 \equiv 39, \ \alpha_2 \equiv 16, \ \alpha_3 \equiv 4 \ (\textrm{mod} \ \mathfrak{p}).$$
We let $S=58$ in the Mignotte-Tzanakis theorem satisfying $\gamma_i^S \equiv 1 = A$ (mod $\mathfrak{p}$), for $i = 1,2,3$; and take
$$\mathcal{P} = \{-2, -1, 0, ..., 55\}$$
as a complete residue system modulo $S=58$.  We check the following conditions: \smallskip

\noindent (i) $\tilde b_m = 0$ for every $m \in \mathcal{M}$. \smallskip

\noindent (ii) If $n \in \mathcal{P}$ and $\tilde b_n \equiv 0$ (mod $59$), then $m \in \mathcal{M}$. \smallskip

\noindent (iii) $\tilde b_{m+S} = \tilde b_{m+58} \not \equiv \tilde b_m$ (mod $59^2$), for every $m \in \mathcal{M}$.  To check this we note that in $\mathbb{Q}_{59}$ we have
\begin{align*}
\gamma_1 & = 15 + 40 \cdot 59 + \cdots \equiv 2375 \ (\textrm{mod} \ 59^2),\\
\gamma_2 & = 50 + 57 \cdot 59 + \cdots \equiv 3413 \ (\textrm{mod} \ 59^2),\\
\gamma_3 & = 52 + 19 \cdot 59 + \cdots \equiv 1173 \ (\textrm{mod} \ 59^2);
\end{align*}
and
$$\alpha_1 \equiv 2871, \ \alpha_2 \equiv 2907, \ \alpha_3 \equiv 1184 \ (\textrm{mod} \ 59^2).$$
Finally, we check the following congruences for $\tilde b_{m+58}$ modulo $59^2$ in $\mathbb{Q}_{59}$, for $m \in \mathcal{M}$:
\begin{align*}
& \tilde b_{56} \equiv 1495; \ \ \tilde b_{58} \equiv 1121; \ \ \tilde b_{59} \equiv 767;\\
& \tilde b_{63} \equiv 354; \ \ \ \tilde b_{72} \equiv 3186.
\end{align*}
By the theorem of Mignotte-Tzanakis, we conclude that $\tilde b_l = 0$ if and only if $l \in \{-2, 0, 1, 5, 14\}$.  Hence, $b_k = 0$ if and only if $k \in \{0, 2, -1, -5, -14\}$, yielding the following solutions for $(m,n)$:
\begin{align*}
k=0: \ &\gamma^0 = 1 + 0\gamma^2,\\
k=2: \ &\gamma^2 = 0 + 1\gamma^2,\\
k = -1: \ &\gamma^{-1} = -1 + 1\gamma^2,\\
k = -5: \ &\gamma^{-5} = 2- 1\gamma^2,\\
k = -14: \ &\gamma^{-14} = -7+4\gamma^2.
\end{align*}
This proves (a). \medskip

\noindent (b) The associates of the element $2-\gamma$ are the only primes in $\textsf{R}_K$ with norm $5$.  Hence,
$$\textsf{N}_{K/\mathbb{Q}}(m+n\gamma^2) = \pm 5$$
if and only if
$$m+n\gamma^2 = \pm (2-\gamma) \gamma^k,$$
for some $k$.  As in part (a),
$$(2-\gamma) \gamma^k = a_k + b_k \gamma +c_k \gamma^2, \ \ a_k, b_k, c_k \in \mathbb{Z}.$$
The sequence $\{b_k\}$ satisfies the same recurrence as in (a), but with the starting values
$$b_{-1} = 0, \ b_0 = -1,\ b_1 = 2, \ b_2 = -1, \ b_3 = 1, \ b_4 = 1, \ b_5 = 0.$$
Since $b_3 = b_4 = 1$ are positive, and $b_{k+3} = b_{k+1} + b_k$, it follows that $b_k > 0$ for $k \ge 6$. \medskip

Now set $\tilde b_l = b_{-l}$. As in (a) we have
$$\tilde b_l = \alpha_1 \gamma_1^{l}+\alpha_2 \gamma_2^{l} +\alpha_3 \gamma_3^l,$$
with the same $\gamma_i \in \mathbb{Q}_{59}$ as before, and
$$\alpha_1 \equiv 158, \ \ \alpha_2 \equiv 3246, \ \ \alpha_3 \equiv 76 \ \ (\textrm{mod} \ 59^2).$$
With $\mathcal{M} = \{-5, 1\}, \mathcal{P} = \{-5, ..., -1, 0, ..., 52\}$ and $S = 58$, we check that conditions (i), (ii), (iii) of (a) hold,
where
$$\tilde b_{-5+S} = \tilde b_{53} \equiv 3009, \ \ \tilde b_{1+S} = \tilde b_{59} \equiv 413 \ (\textrm{mod} \ 59^2).$$
Then the Mignotte-Tzanakis theorem yields that $\tilde b_l = 0$ if and only if $l = -5,1$.  Hence $b_k = 0$ if and only if $k = 5, -1$, yielding the solutions
\begin{align*}
k = 5: \ &(2-\gamma)\gamma^5 = 1+\gamma^2;\\
k = -1:  \ &(2-\gamma) \gamma^{-1} = -3 + 2\gamma^2.
\end{align*}

\noindent (c) This is proved by the same method as in (a) and (b), using that the associates of $2+\gamma$ are the only primes in $\textsf{R}_K$ with norm $\pm 7$.  The solutions are
\begin{align*}
k = -1: \ & (2+\gamma)\gamma^{-2} = -1 + 2\gamma^2,\\
k = -9: \ & (2+\gamma)\gamma^{-9} = -5 + 3\gamma^2.
\end{align*}
The details are left to the reader.
\end{proof}

\begin{thm}
(a) The only values of $c = -\frac{A(m,n)}{B(m,n)}$, for which the rational cycle $\{x_1, x_2, x_3\}$ of $f_c(x) = x^2 + c$ contains an element $x_i = \frac{\pm 1}{4C(m,n)}$ with numerator $\pm 1$ are $c = -\frac{29}{16}, -\frac{29 \cdot 1709}{2^6 \cdot 3^2 \cdot 7^2}$. \smallskip

(b) The only values of $c = -\frac{A(m,n)}{B(m,n)}$, for which $\{x_1, x_2, x_3\}$ contains an element $x_i = \frac{\pm 5}{4C(m,n)}$ with numerator $\pm 5$ are $c = -\frac{29}{16}, -\frac{301}{2^4 \cdot 3^2}$. \smallskip

(c) The only values of $c = -\frac{A(m,n)}{B(m,n)}$, for which $\{x_1, x_2, x_3\}$ contains an element $x_i = \frac{\pm 7}{4C(m,n)}$ with numerator $\pm 7$ are $c = -\frac{29}{16}, -\frac{6469}{2^4 \cdot 3^2 \cdot 5^2}$.
\label{thm:7}
\end{thm}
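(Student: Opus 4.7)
The plan is to reduce all three parts to Lemma \ref{lem:1} by exploiting the action of $\beta(m,n) = (-n, m+n)$ on the parametrization. By Theorem \ref{thm:5}(a), the numerators of $x_1, x_2, x_3$ are $\textsf{N}_{K/\mathbb{Q}}(m + n\gamma^2)$, $\textsf{N}_{K/\mathbb{Q}}(m - n\gamma)$, and $-\textsf{N}_{K/\mathbb{Q}}(m - n(\gamma - \gamma^2))$, respectively. Using the identities (\ref{eqn:14}) and (\ref{eqn:15}) together with the fact that $\gamma^{-1}$ and $\gamma^2$ are units in $\textsf{R}_K$, I rewrite
\[
\textsf{N}_{K/\mathbb{Q}}(m - n\gamma) = \pm \textsf{N}_{K/\mathbb{Q}}((-m-n) + m\gamma^2), \qquad \textsf{N}_{K/\mathbb{Q}}(m - n(\gamma - \gamma^2)) = \pm \textsf{N}_{K/\mathbb{Q}}(-n + (m+n)\gamma^2),
\]
so that the numerator of $x_2$ has the form $\pm\textsf{N}_{K/\mathbb{Q}}(m' + n'\gamma^2)$ with $(m',n') = \beta^2(m,n)$, and the numerator of $x_3$ has the form $\pm\textsf{N}_{K/\mathbb{Q}}(m'' + n''\gamma^2)$ with $(m'',n'') = \beta(m,n)$. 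Therefore the condition that some $x_i$ has numerator $\pm k$ is equivalent to the existence of a pair $(m^*, n^*)$ in the $\beta$-orbit of $(m,n)$ with $\textsf{N}_{K/\mathbb{Q}}(m^* + n^* \gamma^2) = \pm k$.

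Next I invoke Lemma \ref{lem:1} for $k = 1, 5, 7$ to enumerate all such $(m^*, n^*)$, and discard those with $m^* n^* (m^* + n^*) = 0$, since only these correspond to genuine rational $3$-cycles. For part (a) this leaves $\pm(2,-1)$ and $\pm(-7,4)$; for part (b), $\pm(1,1)$ and $\pm(-3,2)$; for part (c), $\pm(-1,2)$ and $\pm(-5,3)$. Now by (\ref{eqn:10}) the quantity $c = -A(m,n)/B(m,n)$ is constant on $\beta$-orbits, and the $\beta$-orbit of $(m^*, n^*)$ recovers all six parameter pairs producing the same $3$-cycle. Thus the distinct values of $c$ are indexed by the $\beta$-orbits of the short list above.

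A direct computation of the orbits shows that $(2,-1),\ (1,1),\ (-1,2)$ all lie in a single $\beta$-orbit (together with their negatives), corresponding to the map $f_{-29/16}$. The pair $(-7,4)$ generates a separate orbit $\{(-7,4), (-4,-3), (3,-7), (7,-4), (4,3), (-3,7)\}$; similarly $(-3,2)$ and $(-5,3)$ each generate their own orbits disjoint from the first. For each of these ``new'' orbits I just evaluate $A(m,n)$ and $B(m,n)$ at any one representative, producing
\[
c = -\frac{A(-7,4)}{B(-7,4)} = -\frac{29 \cdot 1709}{2^6 \cdot 3^2 \cdot 7^2}, \qquad c = -\frac{A(-3,2)}{B(-3,2)} = -\frac{301}{2^4 \cdot 3^2}, \qquad c = -\frac{A(-5,3)}{B(-5,3)} = -\frac{6469}{2^4 \cdot 3^2 \cdot 5^2},
\]
respectively, matching the claimed values.

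The main obstacle is the bookkeeping in the first step, namely making precise that the unit factors $\gamma^{-1}$ and $\gamma^2$ preserve the absolute value of the norm, and that applying $\beta$ or $\beta^2$ to the parameter pair correctly identifies which of $x_1, x_2, x_3$ is realized. Once this dictionary between the $\beta$-action on $(m,n)$ and the cyclic permutation of the three $x_i$ is in place, the remainder of the argument is just a finite verification combining Lemma \ref{lem:1} with orbit enumeration, and no further Thue-type analysis is required beyond what has already been done.
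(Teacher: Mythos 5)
Your proposal is correct and follows essentially the same route as the paper: both reduce all three numerator conditions to the single norm equation $\textsf{N}_{K/\mathbb{Q}}(m+n\gamma^2)=\pm k$ via the identities (\ref{eqn:14}), (\ref{eqn:15}) and the $\beta$-action, then apply Lemma \ref{lem:1} and read off the $c$-values from the surviving $\beta$-orbits. Your version merely makes the orbit bookkeeping more explicit than the paper does (and note the small wording slip: you discard the pairs with $m^*n^*(m^*+n^*)=0$ because only the \emph{remaining} pairs correspond to genuine $3$-cycles).
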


\begin{proof}
By Lemma \ref{lem:1}(a), the only pairs $(m,n)$, for which the numerator of $x_1(m/n)$ is $\pm 1$, are $(m,n) = \pm(2,-1), \pm(-7,4)$, since the other pairs are not allowed.  The transformations $\beta(m,n) = (-n,m+n)$ and $\beta^2(m,n) = (-m-n,m)$, together with (\ref{eqn:10}), (\ref{eqn:14}) and (\ref{eqn:15}) show that the same values of $c$ will result from solving $\textsf{N}_{K/\mathbb{Q}}(m-n\gamma) = \pm 1$ and $\textsf{N}_{K/\mathbb{Q}}(m-n(\gamma-\gamma^2)) = \pm 1$.  This gives the only two possible values of $c$ stated in Part (a).  Parts (b) and (c) follow in the same way from Lemma \ref{lem:1}(b), (c).
\end{proof}

\begin{cor}
For any $c$, for which the map $f_c(x)$ has a rational $3$-cycle, the numerator of the rational number $N(c) = c^3+2c^2+c+1$ is divisible by at least three distinct primes.
\label{cor:2}
\end{cor}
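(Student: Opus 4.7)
The plan is to combine Theorems \ref{thm:6} and \ref{thm:7}(a) with a direct verification in two explicit exceptional cases. Write $\{x_1, x_2, x_3\}$ for the rational $3$-cycle of $f_c(x)$ and let $n_i$ denote the numerator of $x_i$ in lowest terms. By Theorem \ref{thm:6}(a) the $n_i$ are pairwise coprime, and since each $x_i$ has denominator $4C(m,n)$ (as noted immediately after Theorem \ref{thm:5}), any odd prime $p$ dividing some $n_i$ is coprime to $C(m,n)$, hence to the denominator $B(m,n)=16C(m,n)^2$ of $c$, and therefore to the denominator of $N(c)$. Combined with Theorem \ref{thm:6}(b), this shows that every prime divisor of any $n_i$ appears as a prime divisor of the numerator of $N(c)$ in lowest terms.

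I would then split according to how many of $n_1, n_2, n_3$ equal $\pm 1$. By Theorem \ref{thm:6}(c) at most one of them does. If none does, then each $n_i$ has at least one prime divisor, and by pairwise coprimality these are three distinct odd primes; by the previous paragraph each of them divides the numerator of $N(c)$, which settles the generic case.

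In the remaining case exactly one $n_i$ equals $\pm 1$, and Theorem \ref{thm:7}(a) reduces this situation to only two values of $c$: $c = -\frac{29}{16}$ and $c = -\frac{29 \cdot 1709}{2^6 \cdot 3^2 \cdot 7^2}$. For both I would verify the claim by direct computation. A short calculation gives
$$N\!\left(-\tfrac{29}{16}\right) = -\frac{5 \cdot 7 \cdot 23}{2^{12}},$$
whose numerator has the three distinct prime factors $5, 7, 23$; and for the second value the example following Theorem \ref{thm:6} already records
$$N(c) = -\frac{5 \cdot 59 \cdot 223 \cdot 1222801}{2^{18} \cdot 3^6 \cdot 7^6},$$
whose numerator has at least three (in fact four) distinct prime factors.

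I expect no significant obstacle. The only delicate point is the cancellation check in the first paragraph, which ensures that the prime divisors of the $n_i$ genuinely survive in the numerator of $N(c)$ after reduction to lowest terms; beyond that the corollary is essentially immediate from the earlier theorems, since the single case in which one might fear a prime short-fall (one $n_i$ being $\pm 1$ and the other two each a prime power) is precisely the one already tabulated by Theorem \ref{thm:7}(a).
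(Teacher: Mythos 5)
Your proposal is correct and follows essentially the same route as the paper: the generic case via the pairwise coprimality and divisibility statements of Theorem \ref{thm:6} combined with Theorem \ref{thm:7}(a), and direct computation of $N(c)$ for the two exceptional values $c = -\frac{29}{16}$ and $c = -\frac{29\cdot 1709}{2^6\cdot 3^2\cdot 7^2}$. The only difference is that you make explicit the (correct) check that primes dividing the numerators survive in the numerator of $N(c)$ after reduction to lowest terms, which the paper leaves implicit.
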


\begin{proof}
For $c \neq  -\frac{29}{16}, -\frac{29 \cdot 1709}{2^6 \cdot 3^2 \cdot 7^2}$ this follows from Theorems \ref{thm:6} and \ref{thm:7}.  For these two values of $c$ it is clear:
$$c^3+2c^2+c+1 = -\frac{5 \cdot 7 \cdot 23}{2^{12}} \ \ \textrm{resp.} \ \ -\frac{5 \cdot 59 \cdot 223 \cdot 1222801}{2^{18} \cdot 3^6 \cdot 7^6}.$$
\end{proof}

This corollary shows that the rational $3$-cycle of $f_c(x)$ is $p$-adically attracting for at least three different odd primes $p$. \medskip

Theorem \ref{thm:7} shows that rational numbers with the three smallest possible numerators occur together in the $3$-cycle of $f_c(x)$ only for $c = -\frac{29}{16}$. \medskip

Similar arguments yield the following results.

\begin{thm}
(a) The only value of $c = -\frac{A(m,n)}{B(m,n)}$, for which the rational cycle $\{x_1, x_2, x_3\}$ of $f_c(x) = x^2 + c$ contains an element $x_i$ with any of the numerators $\pm 11, \pm 17$, or $\pm 25$ is $c = -\frac{421}{144}$. \smallskip

(b) The only values of $c = -\frac{A(m,n)}{B(m,n)}$, for which the rational cycle $\{x_1, x_2, x_3\}$ of $f_c(x) = x^2 + c$ contains an element $x_i = \frac{\pm 19}{4C(m,n)}$ with numerator $\pm 19$ are $c = -\frac{301}{144}$ and $c = -\frac{337 \cdot 673}{360^2}$.  \smallskip

(c) The only values of $c = -\frac{A(m,n)}{B(m,n)}$, for which the rational cycle $\{x_1, x_2, x_3\}$ of $f_c(x) = x^2 + c$ contains an element $x_i = \frac{\pm 23}{4C(m,n)}$ with numerator $\pm 23$ are $c = -\frac{301}{144}$ and $c = -\frac{43^2}{24^2}$.
\label{thm:8}
\end{thm}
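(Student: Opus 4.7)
The plan is to follow the template of Lemma~\ref{lem:1} and Theorem~\ref{thm:7}, replacing the elements $2-\gamma$ (of norm $-5$) and $2+\gamma$ (of norm $7$) used there by generators of prime ideals of the required norms. By Theorem~\ref{thm:5}(a), the three numerators of the cycle are $\pm \textsf{N}_{K/\mathbb{Q}}(\alpha_i)$ for $\alpha_1 = m+n\gamma^2$, $\alpha_2 = m-n\gamma$, $\alpha_3 = m-n(\gamma-\gamma^2)$, and the relations (\ref{eqn:14})--(\ref{eqn:15}) show that the substitution $\beta(m,n) = (-n, m+n)$ cyclically permutes these three, leaving $c = -A(m,n)/B(m,n)$ invariant. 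Hence for each target numerator $N \in \{11, 17, 19, 23, 25\}$, it suffices to enumerate the integer solutions of the single Thue equation
$$\textsf{N}_{K/\mathbb{Q}}(m + n\gamma^2) = \pm N, \qquad \gcd(m,n) = 1,\ mn(m+n) \neq 0,$$
and record the resulting $c$-values.

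Admissibility of each $N$ is verified from Theorem~\ref{thm:5}(b): $\left(\frac{-23}{q}\right) = -1$ for $q = 11, 17, 19$; the prime $q = 23$ falls under case~(i); and $25 = 5^2$ with $\left(\frac{-23}{5}\right) = -1$. Exploiting unique factorization in $\textsf{R}_K$, I would write $m + n\gamma^2 = \pm \pi \gamma^k$, where $\pi$ ranges over a finite set of non-associate elements of absolute norm $N$, which a short search locates. For $N = 23$ both primes in the factorization $23 = (\gamma-\gamma^2)(\gamma-3)(2\gamma+3)^2$ must be considered, and for $N = 25$ one must consider both $(2-\gamma)^2$ and the residue-degree-$2$ prime above $5$. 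Expanding $\pi \gamma^k = a_k + b_k \gamma + c_k \gamma^2$ then reduces each case to determining the integer zeros of the linear recurrence $\{b_k\}$ satisfying $b_{k+3} = b_{k+1} + b_k$, whose initial values are determined by $\pi$.

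To each such sequence I would apply the theorem of Mignotte and Tzanakis exactly as in the proof of Lemma~\ref{lem:1}, reusing the auxiliary prime $p = 59$ (which splits completely in $\Sigma = K(\sqrt{-23})$), the period $S = 58$, and the residue system $\mathcal{P} = \{-2,-1,\ldots,55\}$. Only the $59$-adic coefficients $\alpha_1, \alpha_2, \alpha_3$ in the closed form $b_{-l} = \sum_i \alpha_i \gamma_i^l$ and the initial terms of the sequence depend on $\pi$; conditions~(i)--(iii) from that proof become direct computations modulo $59$ and $59^2$. The positive-index branch is handled as in Lemma~\ref{lem:1}(a), where the recursion $b_{k+3} = b_{k+1} + b_k$ forces $b_k > 0$ past a small explicit bound. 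Translating each resulting zero $k$ back to a pair $(m,n)$ gives the final list of $c$-values: the pair $(1,2)$ yields $c = -421/144$ with cycle $\{17/12,\, -11/12,\, -25/12\}$, simultaneously accounting for all three numerators $\pm 11, \pm 17, \pm 25$ in part~(a); the pairs $(2,1)$ and $(5,4)$ yield $c = -301/144$ and $c = -337 \cdot 673/360^2$ for part~(b); and the pairs $(2,1)$ and $(3,1)$ yield $c = -301/144$ and $c = -43^2/24^2$ for part~(c).

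The main obstacle is the Mignotte--Tzanakis verification for each of the new initial conditions, in particular confirming that the predicted set of zeros is complete by ruling out accidental zeros of $b_n$ modulo $59$ across the residue system $\mathcal{P}$ and checking $b_{m+S} \not\equiv b_m$ modulo $59^2$ at each predicted zero. The case $N = 25$ is the most delicate, since two non-associate primes of norm $\pm 25$ must be handled in parallel; and the case $N = 23$ requires working with the ramified factorization of $23$ and verifying that the zeros arising from the unit $\gamma-\gamma^2$ of norm $-23$ introduce no spurious $c$-values.
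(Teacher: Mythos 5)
Your proposal follows essentially the same route as the paper's proof: reduce each target numerator to the norm (Thue) equation $\textsf{N}_{K/\mathbb{Q}}(m+n\gamma^2) = \pm N$, factor through the finitely many non-associate elements of norm $\pm N$ in $\textsf{R}_K$ (including both primes above the ramified $23$ and both the square of the norm-$5$ prime and the degree-two prime $\gamma^2+2\gamma+3$ for $N=25$), and determine the zeros of the resulting linear recurrences $b_{k+3}=b_{k+1}+b_k$ via Mignotte--Tzanakis; your final lists of $(m,n)$-pairs and $c$-values agree with the paper's up to the action of $\beta$. The one divergence is your plan to reuse the auxiliary prime $p=59$ in every case: the paper instead takes $p=101$ for the numerator $11$ and for the prime $\gamma^2+2\gamma+3$, and $p=173$ for the numerator $17$ (see its Table 1), evidently because $59$ does not isolate the predicted zeros for those initial conditions, so you should expect to have to vary the auxiliary prime when condition (ii) of the verification fails.
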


\begin{proof}
(a) As in Theorem \ref{thm:7} it suffices to solve the norm equation $\textsf{N}(m+n\gamma^2) = a$ for a given numerator $a$.  We first solve the norm equations for the respective numerators $11, 17, 25$:
\begin{align}
\label{eqn:16} \textsf{N}(2\gamma-1) &= 11 \ \rightarrow \ 3 - \gamma^2 = (2\gamma-1) \gamma^{-1};\\
\label{eqn:17} \textsf{N}(3\gamma+2) & = 17 \ \rightarrow \ 1+2\gamma^2 =  (3\gamma+2) \gamma^{-1};\\
\notag \textsf{N}(\gamma + 3) & = 25, \ \gamma+3 = (2\gamma+1)^2(\gamma-1)\\
\label{eqn:18} & \ \ \ \ \ \ \ \ \rightarrow -2+3\gamma^2 = \pm (\gamma+3) \gamma^{-1};\\
\notag \textsf{N}(\gamma^2+2\gamma+3) &= 25, \ \tilde \pi = \gamma^2+2\gamma+3 \ \textrm{prime}\\
 \label{eqn:19} & \ \ \ \ \ \ \ \ \rightarrow m+n\gamma^2 = \pm (\gamma^2+2\gamma+3) \gamma^k \ \textrm{no solution}.
\end{align}

 \begin{table}
  \centering 
  \caption{Data for the proof of Theorem \ref{thm:8}.}

\noindent \begin{tabular}{|c|cl||c|c|c|c|c|c|c|}
\hline
& & & & & & & & & \\
Num	& $\pi$ & $p$ & $\gamma_1$ & $\gamma_2$ & $\gamma_3$  & $a_1$ & $a_2$ & $a_3$ & $\tilde b_{m+S}$\\
\hline
% after \\ : \hline or \cline{col1-col2} \cline{col3-col4} ...
11 & $2\gamma-1$ & 101 & 1409 & 4507 & 4284 & 2768 & 4450 & 2985 & 6060\\
17 & $3\gamma+2$ & 173 & 23690 & 21569 & 14598 & 9587 & 1499 & 18846 & 28026\\
25 & $\gamma+3$ & 59 & 2375 & 3413 & 1173 & 273 & 846 & 2363 & 3422\\
25 & $\tilde \pi$ & 101 & 1409 & 4507 & 4284 & 2162 & 8668 & 9574 & --\\
19 & $\gamma^2+2$ & 59 & 2375 & 3413 & 1173 & 997 & 2500 & 3465 & 649\\
23 & $\gamma-3$ & 59 & 2375 & 3413 & 1173 & 452 & 809 & 2221 & 2301\\
23 & $2\gamma+3$ & 59 & 2375 & 3413 & 1173 & 2376 & 3414 & 1174 & 1062\\
 \hline
\end{tabular}
\label{tab:1}
\end{table}

In the two cases for the numerator $25$, note that
$$5 = \varepsilon (2\gamma+1)(\gamma^2+2\gamma+3), \ \varepsilon = 2\gamma^2-\gamma-2 \in \textsf{R}_K^\times,$$
is a product (in the ring of integers $\textsf{R}_K$ of $K$) of the primes $2\gamma+1$ and $\gamma^2+2\gamma+3$ having norms $5$ and $25$, respectively, corresponding to the factorization
$$x^3-x-1 \equiv 3(2x + 1)(x^2 + 2x + 3) \ (\textrm{mod} \ 5).$$
Using the method of Lemma \ref{lem:1}, we show that the solutions in (\ref{eqn:16})-(\ref{eqn:18}) are unique, while (\ref{eqn:19}) has no solution.  The calculations are displayed in Table \ref{tab:1}.  For each numerator we find a prime $p$ which splits in $K$, and find $p$-adic approximations of the roots $\gamma_i$ (mod $p^2$) of $x^3+x^2-1=0$.  As in Lemma \ref{lem:1}, the linear recurring sequence $\tilde b_k = b_{-k}$ is defined by
$$\pi \gamma^k = a_k +b_k \gamma +c_k \gamma^2, \ \ k \in \mathbb{Z},$$
where $\pi= 2\gamma-1, 3\gamma+2, \gamma+3$, respectively for (\ref{eqn:16})-(\ref{eqn:18}) and $\pi = \tilde \pi = \gamma^2+2\gamma+3$ for (\ref{eqn:19}).  In the Mignotte-Tzanakis theorem we take $S = p-1$ in each case and solve for the coefficients $a_i$ modulo $p^2$ in the representation
$$\tilde b_k = \sum_{i=1}^3{a_i \gamma_i^k}.$$
The prime $p$ is chosen so that $\tilde b_k  \equiv 0$ (mod $p$) if and only if $k \equiv 1$ (mod $p-1$) in (\ref{eqn:16})-(\ref{eqn:18}) , while $\tilde b_k \not \equiv 0$ (mod $p$) for all $k$ (mod $101$) in (\ref{eqn:19}).  The final column in the table gives the value $\tilde b_{1+S} = \tilde b_p$ modulo $p^2$, showing that $\tilde b_p \not \equiv \tilde b_1 \equiv 0$ (mod $p^2$).  The Mignotte-Tzanakis theorem implies that the solutions in (\ref{eqn:16})-(\ref{eqn:18})  are unique, while it is clear that (\ref{eqn:19}) has no solution, since $\tilde b_k$ is never $0$ (mod $101$). \smallskip

Plugging the solutions $(m,n) = (3,-1), (1,2), (-2,3)$ from (\ref{eqn:16})-(\ref{eqn:18}) into $y_1(s)$ yields $c = -421/144$, completing the proof of (a). \smallskip

Parts (b) and (c) are proved the same way, using the data for $19$ and $23$ in Table \ref{tab:1}.  For the numerator $19$ there are two solutions
$$m + n\gamma^2 = 2+\gamma^2, \ \ m + n\gamma^2 = (2+\gamma^2)\gamma^{-10} = 9-5\gamma^2,$$
and with $S=58$ we have $\tilde b_{0+58} \equiv \tilde b_{10+58} \equiv 649$ (mod $59^2$).  For the numerator $23$ there is one solution for each of the primes $\pi_1 = \gamma-3, \pi_2 = 2\gamma+3$ dividing $23$:
$$m+n\gamma^2 = (\gamma-3)\gamma^{-1} = 4-3\gamma^2, \ \ m+n\gamma^2 = (2\gamma+3)\gamma^{-1} = -1+3\gamma^2.$$
\end{proof}
 
 \section{The numerator graph $\Gamma$.}
 
 \noindent {\bf Definition.} We define a graph on the absolute values of possible numerators of the $3$-periodic points $x_i(m/n)$ as follows.  If $a,b > 0$ are two such numerators, then the graph $\Gamma$ contains an edge $(a,b)$ if and only if there is a map $f_c$ for which two of the numerators in the rational $3$-cycle $\{x_1, x_2, x_3\}$ are $\pm a$ and $\pm b$.  \medskip
 
Define the polynomials $t_i(m,n)$:
\begin{align}
\label{eqn:20} t_1(m,n) &= m^3+2m^2n+mn^2+n^3,\\
\label{eqn:21} t_2(m,n) &= m^3-mn^2-n^3,\\
\label{eqn:22} t_3(m,n) &= m^3+2m^2n+3mn^2+n^3.
\end{align}
These are the numerators of the respective elements of the $3$-cycle $\{x_1, x_2, x_3\}$ (see Theorem \ref{thm:5} and equations (\ref{eqn:8}), (\ref{eqn:11}), (\ref{eqn:12})) of the map $f_c$, where $c = y_1(m/n)$.  It is straightforward to verify that
\begin{align*}
t_1(\beta(m,n)) &= t_3(m,n),\\
t_2(\beta(m,n)) &= -t_1(m,n),\\
t_3(\beta(m,n)) &= t_2(m,n),
 \end{align*}
and $t_i(\beta^3(m,n)) = t_i(-m,-n) = -t_i(m,n)$.  Using these polynomials, the definition of $\Gamma$ can be restated as follows.  Two positive integers $a$ and $b$, each representable as the norm of a primitive element in the module $M_{\gamma} = \mathbb{Z}[1, \gamma] \subset K = \mathbb{Q}(\gamma)$, are connected by an edge if and only if there is a pair of integers $(m,n)$ satisfying
\begin{align*}
a = |t_i(m,n)|, & \ b = |t_j(m,n)|, \ \textrm{where} \ i, j \in \{1,2,3\}, \ i \neq j,\\
 & \ mn(m+n) \neq 0, \ \gcd(m,n) = 1.
 \end{align*}
To calculate $\Gamma$, we start with a positive integer $a_1$ and find all (allowable) solutions of the Thue equation $t_1(m,n) = a_1$.  For each pair $(m,n)$ we compute $t_2(m,n)$ and $t_3(m,n)$.  This is the same as computing $t_1(\beta^2(m,n)) =  t_1(-(m+n),m)$ and $t_1(\beta(m,n)) = t_1(-n, m+n)$, by the above formulas.  This gives us the triangular subgraph $\{a_1, a_2, a_3\} \subset \Gamma$, where $a_i = |t_i(m,n)|$.  Now find all allowable solutions of the equations $t_2(m,n) = a_2$ and $t_3(m,n) = a_3$ and continue.  \medskip 
 
Theorem \ref{thm:6} shows that $\Gamma$ does not contain any edge of the form $(a,a)$.  According to the above definition, the subgraph whose nodes are $\{11, 17, 25\}$ is a closed subgraph, meaning that it is the connected component of any of its elements.  This is because these numerators only occur for $c = -\frac{421}{144}$, by Theorem \ref{thm:8}(a).  Theorem \ref{thm:7}(b) and Theorem \ref{thm:8}(b) and (c) (see the examples following Theorem \ref{thm:5}) show that $\Gamma$ contains the edges
 \begin{align*}
 (5, 19), (5, 23), & (19, 23), (19, 469), (469, 629), (19, 629),\\
 & (23, 49), (49, 55), (23, 55).
  \end{align*}
These edges form three triangles, each corresponding to a specific value of $c$.  Thus the nodes $19$ and $23$ are each $2$-step connected to the node $1$, while $55, 49$ are each $3$-step connected to $1$.  Working on Pari, we have verified that the adjoining Figure \ref{fig:2} represents the connected component of the node $1$. \medskip

\begin{figure}[h!]
	\centering
        \includegraphics[scale=.55]{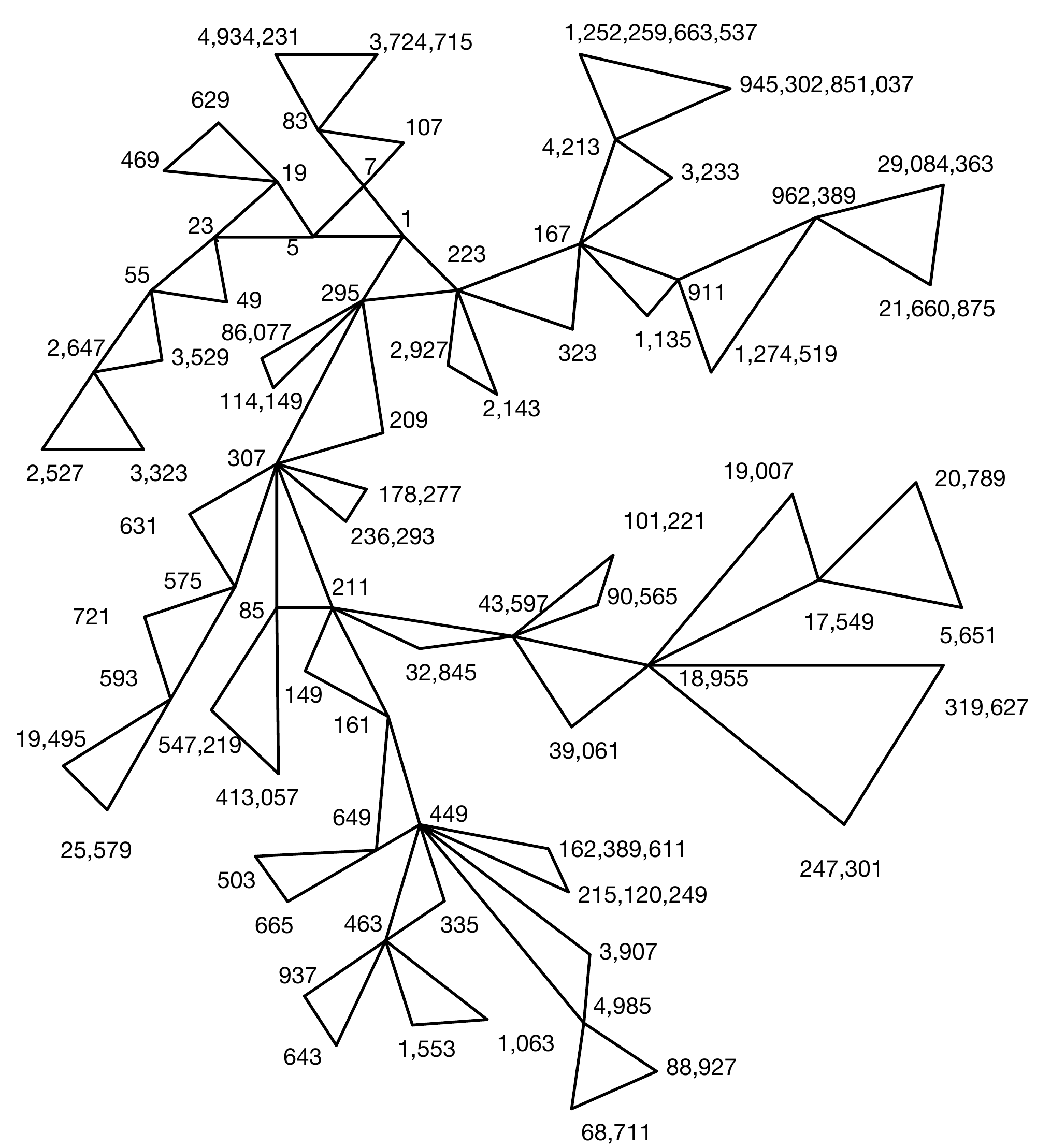}
        \caption{The Cholla: Connected component of $\{1\}$ in the graph $\Gamma$.}
\label{fig:2}
\end{figure}

The nodes $307$ and $449$ in Figure \ref{fig:2} are especially interesting, and give some hints about the relationship of the triangles in the graph $\Gamma$ to the arithmetic in the ring of integers $\textsf{R}_K$ of the field $K = \mathbb{Q}(\gamma)$.  Calculating on Pari, we find the solutions
$$(m,n) = (-9, 7), (-4,7), (-1, 7), (65,-37)$$
of the Thue equation
$$t_1(m,n) = m^3+2m^2n+mn^2+n^3 = 307.$$
These solutions correspond to the following integers in $\textsf{R}_K$ having norm $307$.  The three integers
$$\alpha_1 = -9+7\gamma^2, \ \alpha_2 = -4+7\gamma^2, \ \alpha_3=-1+7\gamma^2$$
represent distinct (non-associate) primes in $\textsf{R}_K$, while
$$65-37\gamma^2 = \gamma^{-17}(-4+7\gamma^2)$$
is an associate of $\alpha_2$.  Thus, either one or two solutions of the Thue equation $t_1(m,n) = 307$ correspond to each prime divisor of $307$ in $\textsf{R}_K$.  Each of the above solutions corresponds to a different value of $c$:
\begin{align}
\notag y_1\left(-\frac{9}{7}\right) &= c_1 = -\frac{463 \cdot 547}{2^4 \cdot 3^4 \cdot 7^2},\\
\notag y_1\left(-\frac{4}{7}\right) &= c_2 = -\frac{127 \cdot 463}{2^6 \cdot 3^2 \cdot 7^2},\\
\label{eqn:23} y_1\left(-\frac{1}{7}\right) &= c_3 = -\frac{127 \cdot 547}{2^4 \cdot 3^2 \cdot 7^2},\\
\notag y_1\left(-\frac{65}{37}\right) &= c_4 = -\frac{757 \cdot 42039677}{2^6 \cdot 5^2 \cdot 7^2 \cdot 13^2 \cdot 37^2}.
\end{align}
(Note the prime factors of the first three values.)  These $c$-values correspond in turn to the four triangles in $\Gamma$ which meet at the node $307$:
\begin{align*}
c_1 &\longrightarrow \{307, 575, 631\};\\
c_2 &\longrightarrow \{85, 211, 307\};\\
c_3 &\longrightarrow \{209, 295, 307\};\\
c_4 &\longrightarrow \{307, 178277,236293\}.
\end{align*}

A similar situation also exists for the prime $449$.  The solutions of $t_1(m,n) = 449$ are
$$(m,n) = (7, 1), (4,5), (-18, 11), (-630,359);$$
where
$$\alpha_1 = 7+\gamma^2, \ \alpha_2 = 4+5\gamma^2, \ \alpha_3 = -18+11\gamma^2$$
are three distinct prime divisors of $449$ in $R_K$, while
$$-630+359\gamma^2 = \gamma^{-32} (7+\gamma^2)$$
is an associate of $\alpha_1$.  Again, one or two solutions of the Thue equation correspond to each prime divisor of $449$ in $\textsf{R}_K$.  (See Conjecture \ref{conj:3} below.) \medskip

Calculating on Pari, we have also verified the following triangular closed subgraphs:
\begin{align*}
\{53, 347, 437\}, \ \{79, &89, 121\}, \ \{115, 4483, 5891\},\\
\{119, 181, 191\}, \ \{157, &17497, 23243\}, \ \{187, 1477, 1877\},\\
\{199, 320149, 424189\}, \ \{229, &11471, 15101\}, \ \{2809, 3353, 4705\}.
\end{align*}

\begin{figure}[h!]
	\centering
        \includegraphics[scale=.55]{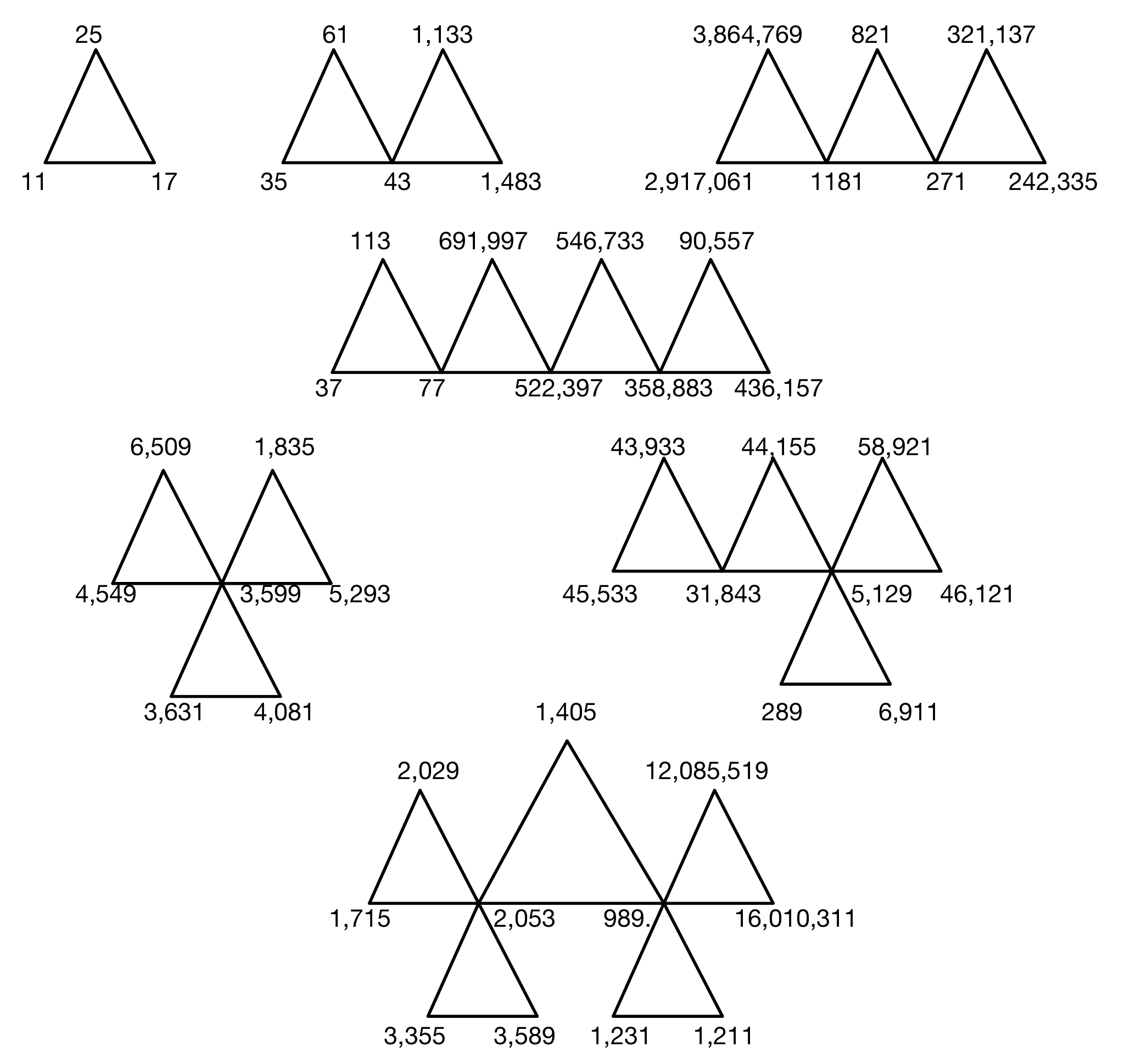}
        \caption{Some small connected components of $\Gamma$.}
\label{fig:3}
\end{figure}

There are also other connected components of numbers $a \le 500$, such as ``bowties'' connecting two triangles, as in
$$\{\{35,43,61\},\{43, 1133, 1483\}\} \ \textrm{and} \ \{\{59,101,109\},\{59, 851, 1151\}\};$$
and the connected chains of three and four triangles pictured in Figure \ref{fig:3}.
We have also found a ``three-leaf clover'' of three triangles connecting at the single node $3599$:
$$\{\{3599, 1835, 5293\}, \{3599, 3631, 4081\}, \{3599, 4549, 6509\}\};$$
and two connected components containing five triangles connected to the nodes $28891$ and $2341711$.  Note that $3599 = 59 \cdot 61$, where $59$ splits in the field $K = \mathbb{Q}(\gamma)$; as do the prime factors of $28891=167 \cdot 173$ and $2341711 = 271 \cdot 8641$.  There is also the connected component of the node $883$, which contains 10 triangles and 21 nodes, including the two nodes $883$ and $1451$ where three triangles meet.  See Figure \ref{fig:4}.  Notice that both primes $883, 1451$ split completely in $K$ as well.  So far, we have not found another connected component of $\Gamma$ which is as complicated and interesting as the connected component of $1$.  \medskip

\begin{figure}[h!]
	\centering
        \includegraphics[scale=.55]{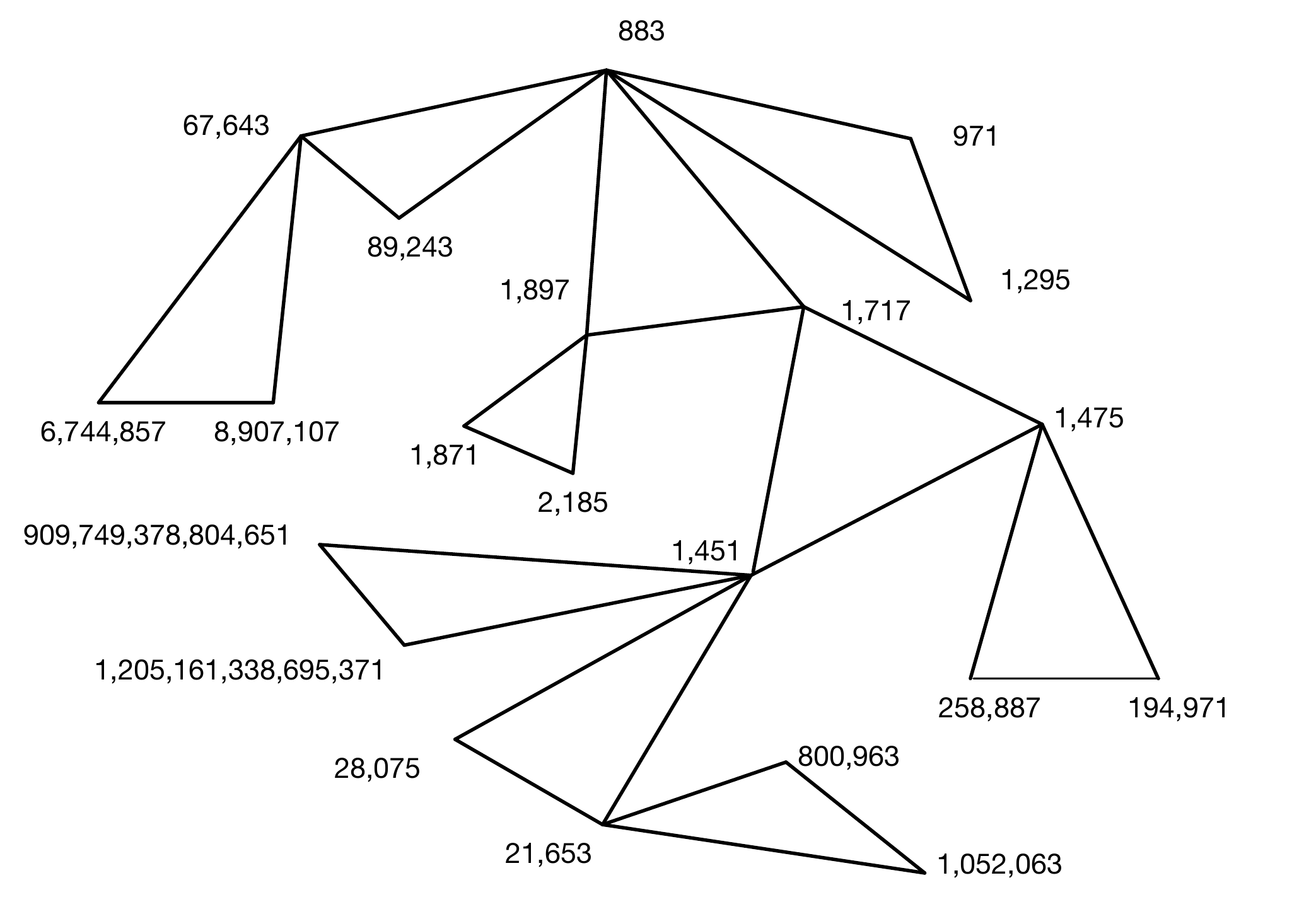}
        \caption{Connected component of $\{883\}$.}
\label{fig:4}
\end{figure}

Assume that $\{a_1, a_2, a_3\}$ are the absolute values of the numerators in a rational $3$-cycle $\{x_1, x_2, x_3\}$ of some map $f_c(x)$.  We say the triangle $\{a_1, a_2, a_3\}$ corresponds to the map $f_c$ and the value $c$.

\begin{thm}
If a given triangle $\{a_1, a_2, a_3\}$ in the graph $\Gamma$ corresponds to the map $f_c$ with parameter $c$, then $c$ is unique.
In other words, the absolute values of the numerators of the elements of the orbit $\{x_1, x_2, x_3\}$ of $f_c$ determine $c$.
\label{thm:9}
\end{thm}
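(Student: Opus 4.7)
The plan is to exploit the three identities
\[
A(m,n) \,=\, t_1^2 - D\, t_2 \,=\, t_2^2 + D\, t_3 \,=\, t_3^2 - D\, t_1, \qquad D := 2mn(m+n),
\]
which follow from $x_{i+1} = x_i^2 + c$ after substituting $x_1 = t_1/D$, $x_2 = t_2/D$, $x_3 = -t_3/D$, and $c = -A/D^2$. Subtracting these pairwise yields the difference identities
\[
t_1^2 - t_2^2 = D(t_2+t_3), \quad t_2^2 - t_3^2 = -D(t_1+t_3), \quad t_3^2 - t_1^2 = D(t_1-t_2).
\]

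The arithmetic backbone of the argument is the six factorizations
\[
t_1+t_2 = 2m^2(m+n), \quad t_1-t_2 = 2n(m^2+mn+n^2), \quad t_1+t_3 = 2(m+n)(m^2+mn+n^2),
\]
\[
t_1-t_3 = -2mn^2, \quad t_2+t_3 = 2m(m^2+mn+n^2), \quad t_2-t_3 = -2n(m+n)^2,
\]
together with the companion $D - (t_1+t_3) = -2(m+n)(m^2+n^2)$, all of which are nonzero when $mn(m+n) \neq 0$ (noting $m^2+mn+n^2$ and $m^2+n^2$ are strictly positive for $(m,n) \neq 0$). Now suppose $(m,n)$ and $(m',n')$, both with coprime entries and nonvanishing $mn(m+n)$, produce the same triangle as an unordered multiset. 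The $\beta$-action cyclically permutes the $|t_i(m',n')|$, so after replacing $(m',n')$ by a suitable $\beta$-iterate we may assume the matching of the two triangles is either the identity permutation or the transposition $(2\,3)$.

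In the identity case, write $t_i(m',n') = \eta_i\, t_i(m,n)$ with $\eta_i \in \{\pm 1\}$. Substituting into the three difference identities for $(m',n')$ and comparing with those for $(m,n)$ gives
\[
D(t_2+t_3) = D'(\eta_2 t_2 + \eta_3 t_3), \ \ D(t_1+t_3) = D'(\eta_1 t_1 + \eta_3 t_3), \ \ D(t_1-t_2) = D'(\eta_1 t_1 - \eta_2 t_2).
\]
A case check over the eight sign vectors shows that every choice other than $(\eta_1,\eta_2,\eta_3) = \pm(1,1,1)$ forces some $t_j = 0$; for instance $(1,1,-1)$ gives $D' = D$ from the third equation, whereupon the first reads $D(t_2+t_3) = D(t_2-t_3)$, forcing $t_3 = 0$, a contradiction. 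Both surviving patterns give $D' = \pm D$ and $A' = A$, so $c' = -A'/D'^2 = c$.

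The hard part will be the transposition case $t_1(m',n') = \eta_1 t_1,\ t_2(m',n') = \eta_2 t_3,\ t_3(m',n') = \eta_3 t_2$. The analogous comparison produces three linear relations in $D, D'$ whose consistency reduces, upon equating two expressions for $D'/D$, to a polynomial identity in $t_1, t_2, t_3$. For $\eta_i \equiv 1$ this identity, combined with $t_1^2 - t_2^2 = D(t_2+t_3)$, becomes
\[
(t_2+t_3)\bigl((t_1+t_3) - D\bigr) \,=\, 0,
\]
and the factorizations above make both factors nonzero when $mn(m+n) \neq 0$. The remaining seven sign patterns yield analogous identities that factor via the same formulas into products of quantities not simultaneously vanishing in the allowed range. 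Hence the transposition case cannot occur, so $(m,n)$ and $(m',n')$ lie in a common $\beta$-orbit, and the $\beta$-invariance of $y_1(s)$ gives $c(m,n) = c(m',n')$, completing the proof.
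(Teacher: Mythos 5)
Your argument is correct, and it takes a recognizably different route from the paper's, although both rest on the same raw material (the identities $x_{i+1}=x_i^2+c$ cleared of denominators, and the six linear factorizations of $t_i\pm t_j$). The paper's proof first observes that the parameter is recovered directly from the \emph{squared} numerators, $s=-\frac{a_1^2-a_2^2}{a_1^2-a_3^2}$ (its equations (\ref{eqn:34})--(\ref{eqn:35}) and Corollary \ref{cor:3}); since only the $a_i^2$ enter, all sign ambiguity disappears at a stroke, and the problem splits into an even-permutation case (immediate from the $\psi$-invariance of $y_1$) and an odd-permutation case (settled by the four factored differences $t_1(m,n)\mp t_i(n,m)$ after normalizing both pairs to have positive entries). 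You keep the signs and instead run a case analysis over the vectors $(\eta_1,\eta_2,\eta_3)$ using the linear relations between $D$ and $D'$; this trades the paper's one slick observation for more bookkeeping, but it dispenses with the positivity normalization and concludes $c'=c$ directly from $D'=\pm D$, $A'=A$ rather than through $s$. I verified the sub-cases you defer, and they do all work: in the identity case, two of the $\eta_i$ always coincide, the corresponding relation forces $D'/D=\eta_i$, and a remaining relation then kills some $t_j$ unless $\eta\equiv\pm(1,1,1)$, exactly as you claim. In the transposition case your assertion that ``the remaining seven sign patterns factor via the same formulas'' is also true and can be established uniformly: eliminating $D'/D$ between the first and third difference identities and cancelling $2(m^2+mn+n^2)$ leaves the single condition
\[
\eta_1\, n\, t_1-\eta_2\,(m+n)\, t_3-\eta_3\, m\, t_2=0,
\]
which for the four sign classes (up to global negation) evaluates to $-2m(m+n)(m^2+n^2)$, $-2mn\bigl(n^2+(m+n)^2\bigr)$, $2n(m+n)\bigl(m^2+(m+n)^2\bigr)$ and $2(m^2+mn+n^2)^2$, none of which vanish when $mn(m+n)\neq 0$. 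Since that computation is the entire content of the hardest case, it (or the eight individual checks) should actually appear in the write-up rather than be asserted; with it included, your proof is complete.
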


\begin{proof}
We make use of the map $\beta(m,n) = (-n, m+n)$ defined on pairs $(m,n)$ with $mn(m+n) \neq 0$ and $\gcd(m,n) = 1$.  It is easily checked that among the pairs $\beta^i(m,n)$, for $0 \le i \le 5$, there is exactly one pair with positive coordinates.  In terms of the polynomials $t_i$ from (\ref{eqn:20})-(\ref{eqn:22}) we have
\begin{align*}
x_1\left(\frac{m}{n}\right) &= \frac{t_1(m,n)}{2mn(m+n)},\\
x_2\left(\frac{m}{n}\right) &= \frac{t_2(m,n)}{2mn(m+n)},\\
x_3\left(\frac{m}{n}\right) &= -\frac{t_3(m,n)}{2mn(m+n)}.
\end{align*}
We will also make use of the following chain of equations:
\begin{equation}
\label{eqn:34} \frac{f_c^2(x_3)-f_c^2(x_1)}{f_c(x_3)-f_c(x_1)} = -\frac{f_c(x_1)-f_c(x_2)}{f_c(x_1)-f_c(x_3)} = -\frac{x_2-x_3}{x_2-x_1} = s,
\end{equation}
in which the last equality follows from the equations (\ref{eqn:8}), (\ref{eqn:11}), (\ref{eqn:12}) and
$$x_3(s)-x_2(s) = -\frac{s^2 + s + 1}{s + 1}, \ \ x_2(s)-x_1(s) = -\frac{s^2 + s + 1}{s(s + 1)}.$$
The first expression in (\ref{eqn:34}) is a dynamical unit, in the language of \cite{ms1}.  In our case it is a unit in the ring of rational numbers whose denominators are divisible at most by prime factors of the quantity $mn(m+n)$.  The second expression in this formula is also given by
$$-\frac{f_c(x_1)-f_c(x_2)}{f_c(x_1)-f_c(x_3)} = -\frac{x_1(s)^2-x_2(s)^2}{x_1(s)^2-x_3(s)^2} = -\frac{t_1(m,n)^2-t_2(m,n)^2}{t_1(m,n)^2-t_3(m,n)^2},$$
since the $x_i$ have the same denominators.  Now putting this together with (\ref{eqn:34}) and applying $\beta^2(m,n) = (-(m+n),m)$ to the polynomials $t_i$ yields $\beta^2(t_1, t_2, -t_3) = (t_2,-t_3, t_1)$ and the following formulas:
\begin{align}
\notag -\frac{t_1(m,n)^2-t_2(m,n)^2}{t_1(m,n)^2-t_3(m,n)^2} &= \frac{m}{n} = s,\\
\label{eqn:35} -\frac{t_2(m,n)^2-t_3(m,n)^2}{t_2(m,n)^2-t_1(m,n)^2} &= -\frac{m+n}{m} = -\frac{s+1}{s} = \psi(s),\\
\notag -\frac{t_3(m,n)^2-t_1(m,n)^2}{t_3(m,n)^2-t_2(m,n)^2} &= -\frac{n}{m+n} = -\frac{1}{s+1} = \psi^2(s).
\end{align}
Assume now that there are two different values $c_1 = y_1\left(\frac{m_1}{n_1}\right), c_2 = y_1\left(\frac{m_2}{n_2}\right)$ corresponding to the triangle $\{a_1, a_2, a_3\}$.  By (\ref{eqn:10}) and the above remarks we may apply the map $\beta$ separately to $(m_1,n_1)$ and $(m_2,n_2)$ to arrange that $m_i, n_i > 0$ for $i = 1, 2$.  We write $x_j^{(i)}$ for the elements of the $3$-cycle of $f_{c_i}$.  Then the above formulas give that
$$x_1^{(i)} > 0, \ \ x_3^{(i)} < 0, \ \ i = 1,2.$$
We can assume therefore that
$$a_1 = t_1(m_1,n_1), \ a_2 = |t_2(m_1,n_1)|, \ a_3 = t_3(m_1,n_1).$$
There are essentially two cases.  In the first case the numerators $a_1, a_2, a_3$ appear in the $3$-cycle $\{x_1^{(2)}, x_2^{(2)}, x_3^{(2)} \}$ of $f_{c_2}$ in a cyclic (even) permutation of their appearance in the $3$-cycle $\{x_1^{(1)}, x_2^{(1)}, x_3^{(1)} \}$ of $f_{c_1}$.  Since the left sides of the formulas in (\ref{eqn:35}) are related by the cyclic permutation $(1, 2, 3)$, it follows that for some $i$,
$$s_1 = -\frac{a_1^2-a_2^2}{a_1^2-a_3^2} = \psi^i(s_2).$$
Now $y_1(s)$ is invariant under $\psi(s)$, so this shows that $c_1 = y_1(s_1) = y_1(\psi^i(s_2)) = c_2$. \medskip

In the second case, the appearance of the numerators $a_1, a_2, a_3$ in the $3$-cycle of $f_{c_2}$ is an odd permutation of their appearance in the $3$-cycle for $f_{c_1}$.  It follows that switching $a_2$ and $a_3$ in the last formula gives
$$\frac{1}{s_1} = -\frac{a_1^2-a_3^2}{a_1^2-a_2^2} = \psi^i(s_2).$$
Hence, $c_1 = y_1(s_1) = y_1\left(\frac{m}{n}\right)$ and
$$c_2 = y_1(\psi^i(s_2)) = y_1\left(\frac{1}{s_1}\right) = y_1\left(\frac{n}{m}\right).$$
This implies that $t_1(m,n) = t_i(n,m)$ for $i = 1$ or $3$ or $t_1(m,n) = \pm t_2(n,m)$.  However, we have the formulas
\begin{align*}
t_1(m,n) - t_1(n,m) & = mn(m - n),\\
t_1(m,n) - t_2(n,m) & = m(m + n)(2m + n),\\
t_1(m,n) + t_2(n,m) & = n(m^2 + mn + 2n^2),\\
t_1(m,n) - t_3(n,m) & = -mn(m + n);
\end{align*}
and only the first of these expressions can equal $0$ for positive $m$ and $n$.  In that case $m = n = 1$ and $c_1 = c_2$.  This completes the proof.
\end{proof}

\begin{cor} With the polynomials $t_i = t_i(m,n)$ defined as in (\ref{eqn:20})-(\ref{eqn:22}) and $s = \frac{m}{n}$ we have
$$s = \frac{t_2+t_3}{t_1-t_2}.$$
If $\{a_1, a_2, a_3\}$ are the absolute values of the $t_i$, then 
$$s = -\frac{a_1^2-a_2^2}{a_1^2 - a_3^2}.$$
\label{cor:3}
\end{cor}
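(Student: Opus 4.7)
The plan is to verify both formulas directly, with the first being a short polynomial identity and the second being a restatement of a formula already established in the proof of Theorem \ref{thm:9}.

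First, I would substitute the definitions (\ref{eqn:20})-(\ref{eqn:22}) into the numerator and denominator of $(t_2+t_3)/(t_1-t_2)$ and simplify. A direct calculation gives
\begin{align*}
t_2(m,n) + t_3(m,n) &= (m^3-mn^2-n^3) + (m^3+2m^2n+3mn^2+n^3)\\
 &= 2m^3 + 2m^2n + 2mn^2 = 2m(m^2+mn+n^2),\\
t_1(m,n) - t_2(m,n) &= (m^3+2m^2n+mn^2+n^3) - (m^3-mn^2-n^3)\\
 &= 2m^2n + 2mn^2 + 2n^3 = 2n(m^2+mn+n^2).
\end{align*}
Since $m^2+mn+n^2 \neq 0$ for the allowable pairs $(m,n)$ (as the only real zero of this form is $m=n=0$), the common factor cancels, and the ratio equals $m/n = s$. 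This establishes the first formula.

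For the second formula, the relation
$$s = -\frac{t_1(m,n)^2-t_2(m,n)^2}{t_1(m,n)^2-t_3(m,n)^2}$$
was already derived as the first identity in equation (\ref{eqn:35}) during the proof of Theorem \ref{thm:9}. Since $a_i = |t_i(m,n)|$, we have $a_i^2 = t_i(m,n)^2$, so substituting yields $s = -(a_1^2 - a_2^2)/(a_1^2 - a_3^2)$.

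There is no real obstacle here; both claims reduce to routine verifications or to invoking material already present in the excerpt. The only point requiring any care is the observation that $m^2 + mn + n^2$ is nonvanishing on the allowable pairs, which validates the cancellation in the first formula.
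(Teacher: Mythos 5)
Your proof is correct and matches the paper's in substance: the paper deduces the first identity from the last equality in (\ref{eqn:34}) together with the equality of denominators, which is exactly the polynomial cancellation you carry out explicitly, and both you and the paper obtain the second identity by quoting the first equation of (\ref{eqn:35}). Your direct verification that $t_2+t_3 = 2m(m^2+mn+n^2)$ and $t_1-t_2 = 2n(m^2+mn+n^2)$, with the observation that $m^2+mn+n^2\neq 0$, is a fine (and self-contained) way to present the same argument.
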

\begin{proof}
This is immediate from the first equation in $(\ref{eqn:35})$ and the last equality in (\ref{eqn:34}), since the elements $x_i$ have equal denominators.
\end{proof} 

\begin{conj}
i) Every connected component of the graph $\Gamma$ is a finite graph. \smallskip

ii) Two triangles in $\Gamma$ never share an edge. \smallskip

iii) There are infinitely many connected components of $\Gamma$ consisting of single triangles $\{a_1, a_2, a_3\}$. \smallskip

iv) At any node $a$ where three or more triangles connect, either $23 \mid a$ or $a$ is divisible by at least one prime $p$ which splits in $K = \mathbb{Q}(\gamma)$. \smallskip

v) For any positive vertex $a$ in $\Gamma$, the number of distinct triangles meeting at $a$ equals the number of distinct (allowable) solutions of the Thue equation
$$m^3+2m^2n+mn^2+n^3 = a, \ \ mn(m+n) \neq 0 \ \ (m,n) = 1.$$
\label{conj:2}
\end{conj}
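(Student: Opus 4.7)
The plan is to treat the five parts of Conjecture 2 as a unified set of statements about how the \emph{$c$-triangles} sit inside the larger graph $\Gamma$, with the norm-form description from Theorem 5(a) as the main technical lever. Every allowable pair $(m,n)$ produces an integer $m + n\gamma^2$ in the rank-two $\mathbb{Z}$-submodule $\mathbb{Z}[1,\gamma^2]$ of $\textsf{R}_K$ whose norm is $\pm t_1(m,n)$, and the substitution $\beta(m,n)=(-n,m+n)$ cycles the three forms $t_1, t_2, t_3$, so that the three vertices of any $c$-triangle correspond to the $\beta$-orbit of a single pair $(m,n)$ up to the involution $(m,n)\mapsto(-m,-n)$.

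For part (v), I would not reprove Theorem 10 but instead show that every triangle in $\Gamma$ is in fact a $c$-triangle; Theorem 10 then delivers the conjecture. The idea is to exploit Corollary 3: any two edges $\{a_1,a_2\}$ and $\{a_1,a_3\}$ sharing the vertex $a_1$ pin down $s = -(a_1^2 - a_2^2)/(a_1^2 - a_3^2)$ up to the action of $\psi$, and hence pin down a candidate $c = y_1(s)$; one then checks that the third edge $\{a_2,a_3\}$ is forced to come from the same $c$, which is a polynomial identity verified directly from the explicit forms of the $t_i$. For part (ii), I would invoke Theorem 11: two distinct $c$-triangles sharing an edge $\{a,b\}$ produce two inequivalent pairs $(m_i,n_i)$ with the same unordered pair of norm values on two of the $t_j$, which Theorem 11 rules out for all pairs $(a,b)$ outside a finite exceptional list; the remaining finitely many cases are handled by direct enumeration.

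For part (iv), the starting point is again Theorem 5(a). A node $a$ at which $k \ge 3$ distinct triangles meet corresponds to $k$ distinct $\beta$-orbits of allowable pairs $(m,n)$ with $|t_1(m,n)| = a$, each contributing an element of $\mathbb{Z}[1,\gamma^2] \subseteq \textsf{R}_K$ of norm $\pm a$. Using that $\gamma$ is the fundamental unit of $K$, one enumerates the prime-ideal factorizations of $(a)$ and, for each such factorization, counts how many unit multiples $\varepsilon\cdot\alpha$ of a generator $\alpha$ of a given ideal land in the submodule $\mathbb{Z}[1,\gamma^2]$; this is precisely the linear-recurrence zero-detection problem already treated in Lemma 1. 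If no rational prime dividing $a$ is ramified ($23 \nmid a$) or split, then every prime factor of $(a)$ is inert in $\textsf{R}_K$, so $(a)$ has a single prime factorization up to ordering, and a rigidity argument of Mignotte--Tzanakis type bounds the number of eligible generators by two, forcing $k \le 2$ and yielding the dichotomy.

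Parts (i) and (iii) are the main obstacle. For (iii), I would exhibit an infinite family of isolated $c$-triangles $\{a_1, a_2, a_3\}$ by choosing $c$-values each of whose three numerators is a prime inert in $K$ and coprime to $23$; the rigidity underlying (iv) then prevents any second triangle from touching $a_1, a_2,$ or $a_3$, and Chebotarev density in the Galois closure $\Sigma$ should supply infinitely many admissible $c$. Part (i) appears the hardest: finiteness of each connected component demands a uniform effective bound on chains of Thue equations $t_i(m,n) = \pm a$ as $a$ ranges over the component, and I expect this step to require effective linear forms in logarithms in $K$ (extending the Mignotte--Tzanakis technique of Lemma 1), applied globally to the three forms simultaneously rather than to one equation at a time.
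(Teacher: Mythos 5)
The statement you are trying to prove is not a theorem of the paper: it is stated as Conjecture 2 and left open. The paper proves only partial results in its direction — Theorem 10 establishes part (v) with ``triangle'' replaced by ``$c$-triangle,'' Theorem 11 shows that at most finitely many $c$-triangles share an edge (via Faltings' theorem, hence ineffectively), and Theorem 12 verifies part (iv) for an infinite but special family of vertices. Your proposal is a research plan rather than a proof, and several of its steps have genuine gaps.

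The most serious gap is in your treatment of (v): you propose to show that every triangle in $\Gamma$ is a $c$-triangle by using Corollary 3 to recover $s$ from two edges at a vertex and then ``checking that the third edge is forced to come from the same $c$.'' But Corollary 3 only applies when $a_1, a_2, a_3$ are already known to be the three numerators of a single $3$-cycle; a triangle in $\Gamma$ could a priori consist of three edges arising from three different values of $c$ (this is exactly the scenario described in the Remark after Conjecture 2), and no polynomial identity rules this out. Indeed, ``every triangle is a $c$-triangle'' is equivalent to Conjecture 2(ii), so your argument for (v) presupposes (ii). Your argument for (ii) in turn relies on Theorem 11 plus ``direct enumeration of the remaining finitely many cases,'' but the finiteness in Theorem 11 comes from Faltings' theorem applied to curves of genus $7$ and $10$, which gives no effective bound and hence nothing to enumerate. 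For (iv), your claim that a vertex $a$ with no split or ramified prime factor has only inert prime factors overlooks the primes $q$ with $\left(\frac{-23}{q}\right)=-1$, which factor as $\pi_1\pi_2$ with norms $q$ and $q^2$ and do occur as numerator factors (Theorem 5(b)(ii)); and the ``rigidity'' bounding the number of unit multiples of a generator landing in $\mathbb{Z}[1,\gamma^2]$ is precisely the paper's Conjecture 3, also open. Parts (i) and (iii) you acknowledge as unresolved. None of these steps can be repaired by the methods already in the paper.
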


\noindent {\bf Remark.} It would follow from Conjecture \ref{conj:2}(ii) that every triangle in $\Gamma$ corresponds to a value of $c$.  The alternative is that three nodes $\{a_1, a_2, a_3\}$ are numerators of $3$-periodic points of different maps $f_c$ two at a time.  In other words, there would exist at least one positive value $a'$ for which $\{a_1, a_2, a'\}$ corresponds to $f_{c'}$, where $a' \neq a_3$; and the same for the pairs $a_1, a_3$ and $a_2, a_3$; and that the three corresponding values of $c$ are distinct.  But then there are two triangles in $\Gamma$ sharing the edge $(a_1, a_2)$. \medskip

Conjecture \ref{conj:2}(iv) suggests the following conjecture related to the arguments in Lemma \ref{lem:1} and Theorem \ref{thm:8}.  See Theorem \ref{thm:12} below.

\begin{conj}
For any pair $(m,n)$ with $mn(m+n) \neq 0$ and $\gcd(m,n) = 1$, for which $\textsf{N}_{K/\mathbb{Q}}(m+n\gamma^2) \neq \pm 1$, there is at most one integer $k \neq 0$ for which $\gamma^k (m+n\gamma^2)$ lies in the $\mathbb{Z}$-module $M = \mathbb{Z}[1, \gamma^2]$.
\label{conj:3}
\end{conj}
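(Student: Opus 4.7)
Suppose $\alpha = m+n\gamma^2$ is primitive in $M$ with $mn(m+n)\neq 0$, and that two distinct nonzero integers $k_1 \neq k_2$ satisfy $\gamma^{k_i}\alpha \in M$; my plan is to derive $\textsf{N}_{K/\mathbb{Q}}(\alpha) = \pm 1$, contradicting the hypothesis of the conjecture. Writing $\gamma^k = a_k + b_k\gamma + c_k\gamma^2$ in the integral basis, the sequence $(b_k)$ satisfies the linear recurrence $b_{k+3}=b_{k+1}+b_k$ with $b_0=0$, $b_1=1$, $b_2=0$, and $c_k = b_{k-1}$, as in Lemma~\ref{lem:1}. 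The condition $\gamma^k\alpha \in M$ is equivalent to $m\,b_k + n\,b_{k+2} = 0$; applied for $k = k_1, k_2$ and combined with the primitivity of $\alpha$, this forces the two primitive vectors $\pm(-b_{k_i+2}/d_{k_i},\, b_{k_i}/d_{k_i})$, with $d_{k_i} = \gcd(b_{k_i}, b_{k_i+2})$, to coincide for $i=1,2$. Equivalently,
\begin{equation*}
b_{k_1+2}\,b_{k_2} \;=\; b_{k_2+2}\,b_{k_1}. \tag{$\ast$}
\end{equation*}

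The core claim is that every solution of $(\ast)$ in distinct nonzero integers $(k_1,k_2)$ arises from the unit structure of $M$: the integers $k_1$ and $k_2$ lie in a common coset $U+j$ of $U = \{0, 2, -1, -5, -14\}$ for some $j \in -U = \{0,-2,1,5,14\}$. Granting this, a direct case-by-case verification in each of the five cosets (using the values $b_k$ and $b_{k+2}$ for $k \in U+j$) shows that the common primitive vector is one of the five pairs $\pm(1,0), \pm(0,1), \pm(-1,1), \pm(2,-1), \pm(-7,4)$ of Lemma~\ref{lem:1}(a), so $\alpha$ is necessarily one of the units $\pm 1, \pm\gamma^2, \pm\gamma^{-1}, \pm\gamma^{-5}, \pm\gamma^{-14}$. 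This gives the desired contradiction.

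To prove the classification of $(\ast)$ above, I would adapt the Mignotte-Tzanakis method already applied in Lemma~\ref{lem:1}. For each fixed $k_1$, the quantity $F_{k_1}(k_2) := b_{k_1+2}b_{k_2} - b_{k_2+2}b_{k_1}$ is a ternary linear recurring sequence in $k_2$ with characteristic polynomial $x^3 - x - 1$. Choosing a prime such as $p = 59$ that splits completely in $K$, one obtains $p$-adic expansions of the three roots $\gamma_i$ modulo $p^2$, writes out the closed-form coefficients of $F_{k_1}(\cdot)$ as linear combinations of $\gamma_i^{k_2}$ with coefficients depending rationally on $b_{k_1}$ and $b_{k_1+2}$, and applies the Mignotte-Tzanakis theorem to conclude that the only zeros $k_2$ lie in the expected cosets $U+j$. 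The main obstacle, absent in Lemma~\ref{lem:1}, is that the recurrence depends parametrically on $k_1$, so the $p$-adic check must be carried out uniformly over all residues of $k_1$ modulo $p-1 = 58$; this yields a finite but substantially larger verification than the single Table~\ref{tab:1}, which should be tractable on a computer algebra system such as Pari.
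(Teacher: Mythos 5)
The statement you are proving is Conjecture~\ref{conj:3} of the paper: the authors state it without proof, as a conjecture suggested by the numerical evidence around the vertices $307$ and $449$, so there is no proof in the paper to compare yours against. Your setup is correct as far as it goes: the coefficient of $\gamma$ in $\gamma^k(m+n\gamma^2)$ is indeed $mb_k+nb_{k+2}$, so membership in $M$ for two distinct nonzero exponents $k_1,k_2$ forces the proportionality $(\ast)$ (note that $(b_k,b_{k+2})\neq(0,0)$ for $k\neq 0$, which you should say explicitly), and the solutions of $(\ast)$ with $k_1,k_2$ in a common translate $j+U$ are exactly those coming from units. But your ``core claim'' is then precisely equivalent to the conjecture itself --- no reduction in difficulty has occurred --- and the proposed proof of that claim does not go through.

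The gap is in your final paragraph. The Mignotte--Tzanakis theorem applies to a single ternary recurrence with a \emph{fixed, finite, explicitly known} candidate zero set $\mathcal{M}$, and its hypotheses (i) and (iii) concern exact integer values and congruences modulo $p^2$, not modulo $p$. Your family $F_{k_1}(\cdot)$ is an infinite family of recurrences whose coefficients $b_{k_1},b_{k_1+2}$ are not determined by $k_1 \bmod (p-1)$. Worse, the expected zero set of $F_{k_1}$ always contains $k_2=k_1$ (since $F_{k_1}(k_1)=0$ identically, and $F_{k_1}(0)=0$ because $b_0=b_2=0$), so the candidate set $\mathcal{M}=\mathcal{M}(k_1)$ moves with $k_1$, and condition (iii) would have to be verified at $m=k_1$ for every integer $k_1$ --- itself an infinite verification, controlled at best by $k_1$ modulo $p(p-1)$, with no guarantee that any single prime (or finite set of primes) succeeds for every residue class. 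Underneath this, writing $b_k=\sum_i\alpha_i\gamma_i^k$ turns $(\ast)$ into the six-term unit equation $\sum_{i\neq j}\alpha_i\alpha_j(\gamma_i^2-\gamma_j^2)\gamma_i^{k_1}\gamma_j^{k_2}=0$ in the two independent exponents $k_1,k_2$, i.e., the problem of intersecting two linear recurrences; no effective general method is known for such two-dimensional problems, which is presumably why the authors leave the statement as a conjecture. Your first two paragraphs are a correct and useful reformulation, but the argument is not a proof.
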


We will call a triangle that corresponds to a map $f_c$ a {\it $c$-triangle}.  The following theorem shows that Conjecture \ref{conj:2}(v) holds for $c$-triangles in place of triangles.

\begin{thm}
The number of $c$-triangles connected to a vertex $a$ in $\Gamma$ equals the number of distinct allowable solutions of the equation $t_1(m,n) = m^3+2m^2n+mn^2+n^3 = a$.
\label{thm:10}
\end{thm}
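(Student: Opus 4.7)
My strategy is to exhibit an explicit bijection between the set of $c$-triangles in $\Gamma$ containing the vertex $a$ and the set of allowable solutions $(m,n)$ of $t_1(m,n)=a$. The setup relies on three ingredients: Theorem \ref{thm:9}, which assigns each $c$-triangle a unique $c$; the parametrization $c=y_1(m/n)$, which attaches to each such $c$ a single $\beta$-orbit $\mathcal{O}_c$ of allowable pairs (of cardinality $6$, since $y_1$ is invariant under $\psi$ and $\psi^2$, $\beta$ has order $6$ on allowable pairs, and the passage $(m,n)\mapsto m/n$ realizes $\psi$ and $\psi^2$ as powers of $\beta$); and Theorem \ref{thm:6}, which ensures that the three numerators $|t_1(m,n)|,|t_2(m,n)|,|t_3(m,n)|$ are pairwise distinct positive integers.

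The first step is to compute the action of $\beta$ on $t_1$. Iterating the identities
\[
t_1\circ\beta = t_3,\qquad t_2\circ\beta = -t_1,\qquad t_3\circ\beta = t_2,
\]
together with $t_i(-m,-n)=-t_i(m,n)$, one finds that $t_1(\beta^k(m,n))$, as $k$ ranges over $\{0,1,\dots,5\}$, takes the six signed values $\pm t_1(m,n), \pm t_2(m,n), \pm t_3(m,n)$, each exactly once. Combined with the pairwise distinctness of $|t_1(m,n)|,|t_2(m,n)|,|t_3(m,n)|$, which follows from Theorem \ref{thm:6}(a) (pairwise coprimality) together with Theorem \ref{thm:6}(c) (no two can simultaneously equal $\pm 1$), this means that for any $c$ whose triangle contains $a$, exactly one pair $(m^*,n^*)\in\mathcal{O}_c$ satisfies $t_1(m^*,n^*)=a$.

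The forward map sends a $c$-triangle at $a$ to this unique $(m^*,n^*)$. For the inverse map, given an allowable solution $(m,n)$ of $t_1(m,n)=a$, set $c=y_1(m/n)$; by Theorem \ref{thm:5}(a) this produces a rational $3$-cycle of $f_c$ with positive numerators $a,|t_2(m,n)|,|t_3(m,n)|$, hence a $c$-triangle at $a$. The composition starting from a $c$-triangle is manifestly the identity, since $y_1(m^*/n^*)=c$. The composition starting from $(m,n)$ is the identity because any two allowable solutions $(m_1,n_1),(m_2,n_2)$ of $t_1=a$ lying in the same $\beta$-orbit satisfy $(m_2,n_2)=\beta^k(m_1,n_1)$ for some $k\in\{0,\dots,5\}$, and the equality $t_1(m_2,n_2)=t_1(m_1,n_1)=a$ combined with the table above forces $k=0$ (every other $k$ would require $|t_i|=|t_j|$ for distinct $i,j$, or $t_1=0$, all of which are excluded).

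The main obstacle is securing the pairwise distinctness of $|t_1|,|t_2|,|t_3|$: without it, the uniqueness of $(m^*,n^*)$ along the $\beta$-orbit collapses and the bijection fails. Theorem \ref{thm:6}(a) reduces the possible collisions to values $\pm 1$, and Theorem \ref{thm:6}(c) rules out precisely that case. Once distinctness is in hand, the remainder of the proof is routine verification of the bijection from the order-$6$ $\beta$-action on $t_1,t_2,t_3$.
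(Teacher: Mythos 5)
Your proof is correct, and its overall architecture --- a bijection between $c$-triangles at $a$ and allowable solutions of $t_1(m,n)=a$, mediated by the $\beta$-orbit attached to each triangle --- is the same as the paper's. Where you genuinely diverge is in the crucial non-collision step, namely that at most one pair in a given $\beta$-orbit satisfies $t_1=a$. The paper proves this by explicitly computing and factoring the five differences $t_1(m,n)-t_1(\beta^i(m,n))$ for $1\le i\le 5$, which equal $-2mn^2$, $2n(m^2+mn+n^2)$, $2t_1(m,n)$, $2(m+n)(m^2+mn+n^2)$ and $2m^2(m+n)$, each visibly nonzero when $mn(m+n)\neq 0$; this is self-contained and needs nothing beyond the polynomial identities for $t_i\circ\beta$. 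You instead observe that the six values $t_1(\beta^k(m,n))$ run through $\pm t_1,\pm t_2,\pm t_3$ exactly once each and import the pairwise distinctness of $|t_1|,|t_2|,|t_3|$ from Theorem \ref{thm:6}(a) and (c); this is slicker but rests on the resultant computations and the $\pm 1$ exclusion of that theorem, which is heavier machinery than the direct factorizations (and is in fact equivalent to them: the paper's five identities are precisely the statements $t_1\neq \pm t_2, \pm t_3, -t_1$). Both routes are valid. One small point to tighten: your parenthetical justification that $c$ determines a \emph{single} $\beta$-orbit $\mathcal{O}_c$ of allowable pairs only shows that the orbit of one solution has six elements all mapping to $c$; that no second orbit maps to the same $c$ requires either Corollary \ref{cor:3}, which recovers $s$ directly from the triangle data, or the uniqueness of the rational $3$-cycle of $f_c$ cited in Section 2. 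Without that, the forward map of your bijection could a priori be ill-defined and the count could overshoot; with it, your argument is complete.
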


\begin{proof} By Corollary \ref{cor:3} (or the proof of Theorem \ref{thm:9}) we conclude that every $c$-triangle uniquely determines $c$ and a corresponding orbit of $(m,n)$ under the group of substitutions generated by $\beta$.  If $a$ is a vertex in the triangle corresponding to $c$, then by applying a power of $\beta$ we will have $a = t_1(m,n)$ for some pair $(m,n)$ in this orbit.  Hence each $c$-triangle containing $a$ corresponds to at least one solution of this equation.  Suppose that $t_1(m,n) = a$ and that one of the other pairs $\beta^i(m,n)$ is also a solution, i.e., that $t_1(\beta^i(m,n)) = a$.  Then we have, depending on the value of $i$:
\begin{align*}
t_1(m,n) - t_1(\beta(m,n)) &= t_1(m,n) - t_1(-n,m+n)\\
& = t_1(m,n) - t_3(m,n) = -2mn^2;\\
t_1(m,n) - t_1(\beta^2(m,n)) &= t_1(m,n) - t_1(-m-n,m)\\
& = t_1(m,n) - t_2(m,n) = 2n(m^2+mn+n^2);\\
t_1(m,n) - t_1(\beta^3(m,n)) &= t_1(m,n) - t_1(-m,-n) = 2t_1(m,n);\\
t_1(m,n) - t_1(\beta^4(m,n)) &= t_1(m,n) - t_1(n,-m-n)\\
& = t_1(m,n) + t_3(m,n) = 2(m + n)(m^2 + mn + n^2);\\
t_1(m,n) - t_1(\beta^5(m,n)) &= t_1(m,n) - t_1(m+n,-m)\\
& = t_1(m,n) + t_2(m,n) = 2m^2(m + n).
\end{align*}
Since $mn(m+n) \neq 0$, it follows that none of these expressions can equal $0$.  Hence, only the pair $(m,n)$ is a solution of $t_1(m,n) = a$.  This shows that there is a $1-1$ correspondence between $c$-triangles with vertex $a$ and solutions of $t_1(m,n) = a$.
\end{proof} 

We will now prove the following result related to Conjecture \ref{conj:2}(ii).

\begin{thm}
There are at most a finite number of $c$-triangles which share an edge with another $c$-triangle in the graph $\Gamma$.
\label{thm:11}
\end{thm}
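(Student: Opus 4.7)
The plan is to encode the shared-edge condition as a polynomial system in the four variables $(m_1,n_1,m_2,n_2)$ and then show that the resulting affine variety has, outside certain explicit trivial components, only finitely many allowable primitive integer points.

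First, I would reduce the problem to a finite list of canonical polynomial systems using the $\beta$-action. Suppose primitive pairs $(m_1,n_1),(m_2,n_2)$ in distinct $\beta$-orbits, both with $mn(m+n)\neq 0$, give $c$-triangles sharing an edge $(a,b)$. Since $\beta$ acts on the signed triple by $(t_1,t_2,t_3)\mapsto(t_3,-t_1,t_2)$ and $\beta^3$ negates all three, applying appropriate $\beta$-powers independently to each pair lets me put the two matched coordinates into the first two slots. This reduces the problem to showing finiteness of allowable primitive integer solutions for each system of the form
\begin{align*}
E_1:\ & t_1(m_1,n_1) = t_1(m_2,n_2),\\
E_2:\ & t_2(m_1,n_1) = \varepsilon\, t_\ell(m_2,n_2),
\end{align*}
with $\ell\in\{2,3\}$ and $\varepsilon\in\{\pm 1\}$ (four canonical cases).

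Next, for each canonical system I would catalog the obvious components of the variety $V = V(E_1,E_2)\subset\mathbb{A}^4$. When $(\ell,\varepsilon)=(2,+1)$ the diagonal $\Delta=\{(m_1,n_1)=(m_2,n_2)\}$ is a two-dimensional component corresponding to same-$\beta$-orbit solutions. Substituting $(m_2,n_2)=\beta^k(m_1,n_1)$ for each $k=1,\ldots,5$ into $E_1,E_2$ and using the explicit $\beta$-action on $(t_1,t_2,t_3)$ produces additional one-dimensional "ghost" branches. For instance, the substitution $(m_2,n_2)=\beta(m_1,n_1)=(-n_1,m_1+n_1)$ forces $t_1(m_1,n_1)=t_3(m_1,n_1)$ and $t_2(m_1,n_1)=-t_1(m_1,n_1)$, which factors as $m_1=0$; the other substitutions similarly collapse to $m_1n_1(m_1+n_1)=0$. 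Thus all these ghost branches lie entirely in the excluded locus. A parallel catalog exists for the other three canonical cases (some of which lack the diagonal component due to a sign mismatch and hence start with a lower-dimensional $V$).

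Finally, I would argue that after removing $\Delta$ and the excluded ghost branches, the residual part of $V$ contains only finitely many primitive allowable points. Passing to ratios $u=m_1/n_1,\ v=m_2/n_2$ and eliminating the common factor $n_1^3/n_2^3$ from $E_1$ and $E_2$ produces the compatibility relation $t_1(u,1)\,t_\ell(v,1) = \varepsilon\, t_1(v,1)\,t_2(u,1)$, a bidegree-$(3,3)$ curve in $\mathbb{P}^1\times\mathbb{P}^1$. Removing the factor $(u-v)$ (when present) yields a residual curve of bidegree at most $(2,2)$, and the compatibility $(n_1/n_2)^3=t_1(v,1)/t_1(u,1)$ for rational $n_1/n_2$ lifts this residual to a superelliptic cover $y^3=t_1(v,1)/t_1(u,1)$. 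The goal is to show this cover has genus $\geq 2$ in each case, from which Faltings' theorem gives finitely many rational points. Summing over the four canonical cases bounds the number of shared edges, and hence the number of $c$-triangles that share an edge with another.

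The main obstacle I expect is the genus computation in the last step: verifying, in each canonical case, that the superelliptic cover of the residual $(2,2)$-curve is of general type. If that becomes unwieldy, a purely computational alternative is to compute the resultant of $E_1$ and $E_2$ with respect to one of the variables and factor it explicitly, exhibiting the residual as a zero-dimensional scheme, which would yield the finiteness directly without invoking Faltings.
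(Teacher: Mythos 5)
Your overall architecture matches the paper's: normalize the shared edge into four canonical systems using the $\beta$-action (your $(\ell,\varepsilon)$ cases are an equivalent normalization of the paper's $t_1(x,y)=t_i(m,n)$, $t_2(x,y)=\pm t_j(m,n)$ with $\{i,j\}=\{1,2\}$), discard the diagonal and the degenerate branches, and prove finiteness of what remains by curve theory. The genuine gap is that the finiteness step --- which is the entire content of the theorem --- is not carried out in any of the four cases, and in the most delicate case it is not clear it can be carried out along the route you propose. For $(\ell,\varepsilon)=(2,+1)$ your residual compatibility curve in $\mathbb{P}^1\times\mathbb{P}^1$ has bidegree $(2,2)$, hence arithmetic genus at most $1$; Faltings says nothing about it, and everything rests on the unverified assertion that the cyclic cubic cover $w^3=t_1(v,1)/t_1(u,1)$ has genus at least $2$. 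If the residual $(2,2)$ curve is rational or the cover is insufficiently ramified, the cover can have genus $0$ or $1$, and your argument stalls exactly where the theorem's difficulty lives. The paper confronts the same low-genus phenomenon in this case head-on: its resultant $\mathrm{Res}_m\bigl(t_1(x,y)-t_1(m,n),\,t_2(x,y)-t_2(m,n)\bigr)$ factors as $8(y-n)(n^2+ny+y^2)F(x,y)$ with $F$ a sextic whose dehomogenization is birational to the conductor-$19$ elliptic curve $w^2=z^3-\tfrac{28}{3}z-\tfrac{1261}{108}$ of rank $0$ and torsion of order $3$; an explicit descent (reduction mod $5$) shows its only rational points map to the disallowed points $(0,n)$ and $(-n,n)$, which forces $y=n$ and then $x=m$. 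Some such explicit computation --- a rank or genus determination for a concretely exhibited curve --- is unavoidable, and your write-up defers it entirely.

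Your proposed fallback is also not a repair: the resultant of the two equations with respect to one variable is a hypersurface in the remaining three variables, i.e.\ (after using homogeneity) a plane curve, not a zero-dimensional scheme, so it cannot ``yield the finiteness directly without invoking Faltings.'' Indeed this fallback is essentially what the paper does, and the paper still needs Faltings: in the remaining three cases the resultants produce plane curves of genus $10$, $7$, and $10$ respectively ($F_3, F_5, F_6$ in the paper), to which Faltings is applied, followed by a short argument bounding the number of primitive integer pairs lying over each rational point of the dehomogenized curve (a point you would also need to supply: finitely many rational points $(x_0,y_0)$ on $F_3=0$ give at most two values of $n$ each with $\gcd(nx_0,ny_0)=1$). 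Your ratio-plus-cover idea is an attractive alternative in principle, and your reduction and your treatment of the degenerate branches are correct, but as written the proof establishes finiteness in none of the four cases.
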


\begin{proof}  Assume that two $c$-triangles
$$T_1 = \{a_1^{(1)}, a_2^{(1)}, a_3^{(1)}\} \ \textrm{and} \ T_2 = \{a_1^{(2)}, a_2^{(2)}, a_3^{(2)}\}$$
in $\Gamma$ share an edge, where $a_i^{(1)} = |t_i(m,n)|$ and $a_i^{(2)} = |t_i(x,y)|$.  Then let the vertices on the common edge be $a,b$.  By applying powers of the map $\beta$ separately to these triangles, cyclically permuting the representations of the $a_i^{(j)}$ in terms of $t_i$ and renaming the $a_i^{(j)}$, we can arrange that the vertices which do not lie on the common edge are $a_3^{(1)} = |t_3(m,n)|$ and $a_3^{(2)} = |t_3(x,y)|$.  Further, since $a = |t_1(x,y)| = |t_i(m,n)|$ for $i = 1$ or $2$, we have
$$t_1(x,y) = \pm t_i(m,n), \ \ t_2(x,y) = \pm t_j(m,n), \ \ \{i,j\} = \{1, 2\}.$$
Applying $\beta^3$ to $T_1$, if necessary, which replaces $(m,n)$ by $(-m,-n)$ and leaves $i$ and $j$ fixed, we can assume that
$$t_1(x,y) = t_i(m,n), \ \ t_2(x,y) = \pm t_j(m,n), \ \ \{i,j\} = \{1, 2\}.$$
Thus, there are four cases, according as $i = 1$ or $2$ and the sign in the second equation is plus or minus. \medskip

{\it Case 1.}
Assume that
\begin{equation}
\label{eqn:36} t_1(x,y) = t_1(m,n) \ \ \textrm{and} \ \  t_2(x,y)=t_2(m,n).
\end{equation}
If these equations have a common solution, then the following resultant must be zero:
\begin{equation}
\label{eqn:37} \textrm{Res}_m(t_1(x,y) - t_1(m,n), t_2(x,y) - t_2(m,n)) \ = 8(y - n)(n^2 + ny + y^2) F(x,y),
\end{equation}
where
\begin{align*}
F(x,y) = \ & n^6 - n^3x^3 + x^6 + (-4n^3x^2 + 3x^5)y + (-3n^3x + 6x^4)y^2\\
& \ + (-2n^3 + 7x^3)y^3 + 6x^2y^4 + 3xy^5 + y^6.
\end{align*}
The polynomial $F$ is homogeneous of degree $6$ in $(x,y,n)$.  Putting $(x,y) = (nx, ny)$ and dividing by $n^6$ amounts to setting $n=1$, so we only need to find the solutions of the curve $F_1(x,y)= 0$, with
\begin{align*}
F_1(x,y) = \ & y^6 + 3xy^5 + 6x^2y^4 + (7x^3 - 2)y^3 + (6x^4 - 3x)y^2\\
& \ + (3x^5 - 4x^2)y + x^6 - x^3 + 1.
\end{align*}
The curve $F_1(x,y) = 0$ is birationally equivalent to the elliptic curve
\begin{equation}
\label{eqn:38} E: w^2 = z^3 - \frac{28}{3}z -\frac{1261}{108}, \ \ j(E) = -\frac{2^{18} \cdot 7^3}{19^3},
\end{equation}
by the mapping $(z,w) \rightarrow (x,y)$, where
\begin{align*}
x & = \frac{-(3z - 16)(9z^2 + 39z + 49)(54z^3 + 216z^2  + 783z + 982 - 288w + 54wz )}{2(729z^6 + 5103z^5 + 40581z^4 + 97146z^3 + 195264z^2 + 487197z + 483193)},\\
y & = \frac{(27z^3 + 216z^2 + 9z - 541)(54z^3 + 216z^2 + 783z + 982 - 288w + 54wz)}{2(729z^6 + 5103z^5 + 40581z^4 + 97146z^3 + 195264z^2 + 487197z + 483193)};
\end{align*}
and the denominator in these expressions is irreducible.  Furthermore, the inverse mapping has the form
$$z = \frac{u(x,y)}{3x(x+1)^2}, \ \ w = \frac{v(x,y)}{2x(x+1)^3},$$
where $u, v$ are polynomials in $x$ and $y$.  Since
\begin{align*}
F_1(0,y) &= (y-1)^2(y^2 + y + 1)^2,\\
F_1(-1,y) & = (y - 1)(y^5 - 2y^4 + 4y^3 - 5y^2 + 4y - 3),
\end{align*}
$(x,y) = (0,1), (-1,1)$ are the only points where the rational expressions for $z$ and $w$ are undefined. \medskip

Now the elliptic curve $E$ has rank $0$ and torsion group $T = \{O, (\frac{16}{3}, \pm \frac{19}{2})\}$ of order $3$ over $\mathbb{Q}$.  (A minimal model for $E$ is $y^2+y=x^3+x^2-9x-15$, and its conductor is $19$.  This is curve 19a1 in Cremona's tables on his elliptic curve website \cite{cr}.)  The assertion about torsion follows by looking at the reduced curve $\widetilde{E}$ of $E$ mod 5, which has only the points $\tilde O, (2,2),(2,3)$ over $\mathbb{F}_5$.  Since the torsion group over $\mathbb{Q}$ injects into the group $\widetilde{E}(\mathbb{F}_5)$, the points in $T$ are the only torsion points.  Hence, $E(\mathbb{Q}) = T$.\medskip

Both of the points $(\frac{16}{3}, \pm \frac{19}{2})$ yield the point $(X,Y) = (0, 1)$ on $F_1(x,y) = 0$, while the infinite point $O$ on $E$ yields $(X,Y) = (-1, 1)$.  Hence, the only points on $F(x,y) = 0$ are $(0,n)$ and $(-n,n)$.  Since both of these points are not allowed, this shows that $F(x,y) \neq 0$.  Equation (\ref{eqn:37}) now implies that $y=n$.  Now we compute
\begin{align*}
t_1(x,n) - t_1(m,n) &= (m - x)(m^2 + 2mn+ n^2 + 2nx + mx  + x^2)\\
t_2(x,n)- t_2(m,n) & = (m - x)(m^2 - n^2 + mx  + x^2);
\end{align*}
and find that
$$\textrm{Res}_x(m^2 + 2mn+ n^2+ + 2nx + mx  + x^2, m^2 - n^2 + mx  + x^2) = 4mn^2(m + n).$$
It follows that the quadratic polynomials in this resultant are not both 0, from which we obtain $x = m$. \medskip

Therefore, $(x,y) = (m,n)$ in this case. \medskip

\noindent {\it Case 2.}  Assume now that $t_1(x,y) = t_1(m,n)$ and $t_2(x,y) = -t_2(m,n)$. We compute that
\begin{align*}
\textrm{Res}_m(t_1(x,y) &- t_1(m,n), t_2(x,y) + t_2(m,n)) \ = 8F_2(x,y),\\
& = 8(y^6n^3 + 2xy^5n^3 + 4x^2y^4n^3 + (-n^6 + 4n^3x^3 - x^6)y^3\\
& \ \ \ + (-n^6x + 6n^3x^4 - 3x^7)y^2 + (-4n^6x^2 + 7n^3x^5 - 3x^8)y\\
& \ \ \  + n^9 - 3x^3n^6 + 3x^6n^3 - x^9).
\end{align*}
In this case, $F_2(x,y)$ is homogeneous of degree $9$ in $(x,y,n)$, and setting $F_3(x,y) = \frac{1}{n^9}F_2(nx,ny)$ yields the polynomial
\begin{align*}
F_3(x,y) = \ & \ y^6+ 2xy^5 + 4x^2y^4 + (-1 + 4x^3 - x^6)y^3 + (-x + 6x^4 - 3x^7)y^2\\
&  + (-4x^2 + 7x^5 - 3x^8)y + 1 - 3x^3 + 3x^6 - x^9.
\end{align*}
The curve $F_3(x,y) = 0$ has genus 10, and has only a finite number of rational points, by the well-known theorem of Faltings \cite{fa}.  Let $(x_0,y_0)$ be a rational point on $F_3(x,y) = 0$ other than $(1,0)$.  Then $F_2(nx_0,ny_0) = 0$.  There are at most two integers $n$ for which $x = nx_0, y=ny_0$ are relatively prime integers, since $n$ must be divisible by the least common multiple $\l$ of the denominators of $x_0$ and $y_0$ and $|n| > \l$ implies $\gcd(x,y)>1$.  Hence, there can be at most two values of $n$ for each point $(x_0,y_0)$. \medskip

Using the same argument applied to the resultant with respect to $n$ yields the curve $F_4(x,y) = 0$, where
\begin{align*}
F_4(x,y) = \ & \ y^9 + 3xy^8 + 6x^2y^7 + (7x^3 - 2)y^6 + (6x^4 - 4x)y^5 + (3x^5 - 6x^2)y^4\\
& + (x^6 - 4x^3 + 2)y^3 + (-2x^4 + 2x)y^2 + x^2y - 1 - x^3.
\end{align*}
This curve also has genus $10$, so there are only a finite number of integers $m$ for which $(x,y) = (mx_1,my_1)$ is a point on $F_2(x,y) = 0$ satisfying $\gcd(x,y) = 1$.  Hence, there are at most finitely many pairs $(m,n)$ for which $t_1(x,y) = t_1(m,n)$ and $t_2(x,y) = -t_2(m,n)$ for some pair of relatively prime integers $(x,y)$.
\medskip

\noindent {\it Cases 3, 4.}
The cases $t_1(x,y) = t_2(m,n), t_2(x,y) = \pm t_1(m,n)$ are handled by the same arguments as in Case 2.  The genera of the curves $F_5 = 0$ and $F_6 = 0$ defined by
\begin{align*}
\textrm{Res}_m(&t_1(x,y) - t_2(m,1), t_2(x,y) - t_1(m,1)) = 8F_5(x,y),\\
F_5(x,y) & = \ x^6 - x^3 + 1 + (x^5 + 2x^2)y + (2x^4 + 3x)y^2 + (x^6 + 4x^3 + 3)y^3\\
& \ \ + (3x^5 + 8x^2)y^4 + (6x^4 + 6x)y^5 + (7x^3 + 3)y^6 + 6x^2y^7 + 3xy^8 + y^9
\end{align*}
and
\begin{align*}
\textrm{Res}_m(&t_1(x,y) - t_2(m,1), t_2(x,y) + t_1(m,1)) = -8F_6(x,y),\\
F_6(x,y) & = \ y^6 + 2xy^5 + 2x^2y^4 + (x^6 - 1)y^3 + (3x^7 + 2x^4 - x)y^2\\
& + (3x^8 + 5x^5 + 2x^2)y + x^9 + 3x^6 + 3x^3 + 1
\end{align*}
are, respectively, $7$ and $10$; and these are also the genera of the curves obtained by taking the resultants of $t_1(x,y) - t_2(1,n)$ and $t_2(x,y) \mp t_1(1,n)$ with respect to $n$.  It follows as before that there are at most finitely many pairs $(m,n)$ for which the simultaneous Thue equations in these cases have an integer point $(x,y)$ with $xy(x+y) \neq 0$ and $\gcd(x,y) = 1$.  This proves the theorem.
\end{proof}

Andrew Bremner (private communication) has shown us that the system
$$t_1(x,y) - t_1(m,n) = 0, \ \ t_2(x,y) + t_2(m,n) = 0$$
in Case 2 above can be mapped to the intersection of the surfaces in $\mathbb{P}^3$ whose equations are
\begin{align*}
& Y^2 - (M+3N)Y + N^2  - X^2 = 0,\\
& (M + 2N + X)Y^2 - (M^2 + 5MN + 7N^2 + 2NX)Y\\
& \ \  + 2XMN + 4XN^2 + M^3 + 5M^2N + 9MN^2  + 6N^3 = 0,
\end{align*}
where $X, Y, M, N$ are polynomials in $\mathbb{Z}[x,y,m,n]$.  Taking the resultant of these two polynomials with respect to
$M$ yields the genus $4$ curve
\begin{align*}
\mathscr{C}: & \ Y^6 + (-4N + X)Y^5 + (7N^2 - 2X^2)Y^4\\
 & + (-10N^3 - 2N^2X + 7NX^2)Y^3\\
 & + (8N^4 + 2N^3X - 10N^2X^2 - 2NX^3 + 2X^4)Y^2\\
 & + (-4N^5 + 8N^3X^2 - 4NX^4)Y + N^6 - 3X^2N^4 + 3X^4N^2 - X^6 = 0.
\end{align*}
This curve possesses the points $(X,Y,N) = (1, 0, 1), (-1, 0, 1), (-1, 1, 1)$, but it is unclear if it has any other points defined over $\mathbb{Q}$.

\begin{thm}
i) There are infinitely many vertices in $\Gamma$ at which at least three distinct $c$-triangles meet.  This holds for all vertices of the form
$$a = x^6 - 3x^5y + 5x^4y^2 - 5x^3y^3 + 5x^2y^4 - 3xy^5 + y^6, \ \ x, y \in \mathbb{Z}, (x,y) = 1,$$
for which the integers $x^2, y^2, (x-y)^2$ are distinct. \smallskip

ii) A root $\alpha$ of the normal polynomial
$$h(t)= t^6 - 3t^5 + 5t^4 - 5t^3 + 5t^2 - 3t + 1$$
generates the Hilbert class field $\Sigma = \mathbb{Q}(\alpha)$ of $\mathbb{Q}(\sqrt{-23})$ over $\mathbb{Q}$.  Hence, the values $a = y^6 h\left(\frac{x}{y}\right) = \textsf{N}_{\Sigma/\mathbb{Q}}(x-y\alpha)$ in (i) are norms from the field $\Sigma$. \smallskip 

iii) For any vertex of the form $a = y^6 h\left(\frac{x}{y}\right)$, the prime divisors $p \neq 23$ of $a$ split completely in the field $K = \mathbb{Q}(\gamma)$.
\label{thm:12}
\end{thm}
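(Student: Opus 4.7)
The plan is to exhibit, for each admissible $(x,y)$, three distinct allowable solutions of the Thue equation $t_1(m,n)=a$, and then invoke Theorem \ref{thm:10}. For part (i), my first step is to establish the polynomial identity
\begin{equation*}
t_1(m,N) - y^6 h(x/y) \; = \; (m+x^2)(m+y^2)(m+(x-y)^2), \qquad N := x^2-xy+y^2,
\end{equation*}
by expanding the right side and matching coefficients in $m$ (the $m^2$-coefficient yields $2N$, the $m$-coefficient yields $N^2$, and the constant term yields the closed form $a = N^3 - (xy(x-y))^2$). This produces three solutions with common second coordinate $N$ and first coordinates $-x^2,\,-y^2,\,-(x-y)^2$. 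Next I verify allowability: $\gcd(x,y)=1$ forces $\gcd(m_i,N)=1$, while $m_iN(m_i+N)\neq 0$ follows from $x,y,x-y\neq 0$ (noting, e.g., that $-x^2+N = y(y-x)$). Distinctness of $x^2, y^2, (x-y)^2$ makes the three pairs distinct. Since the $\beta$-orbit of any pair has six distinct $n$-coordinates $\{n_0, m_0+n_0, m_0, -n_0, -m_0-n_0, -m_0\}$ and all three of our pairs share the positive value $N$, any two lying in a common $\beta$-orbit reduces to already-excluded degeneracies. Theorem \ref{thm:10} then produces three distinct $c$-triangles meeting at $a$. Infinitely many distinct such $a$ arise because each Thue equation $y^6 h(x/y)=a$ has only finitely many solutions.

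For part (ii), the palindromic symmetry $h(t)=t^6h(1/t)$ implies that $u := \alpha + 1/\alpha$ satisfies a degree-$3$ relation, obtained by dividing $h(t)/t^3$: explicitly, $u^3 - 3u^2 + 2u + 1 = 0$. The substitution $u = 1-\gamma$ converts this into $\gamma^3 - \gamma - 1 = 0$, so $u = 1-\gamma$ and $K = \mathbb{Q}(\gamma) = \mathbb{Q}(u) \subset \mathbb{Q}(\alpha)$. From $\alpha^2 - u\alpha + 1 = 0$ one obtains $\mathbb{Q}(\alpha) = K(\sqrt{u^2-4})$, where $u^2-4 = (\gamma-3)(\gamma+1)$ has $K$-norm $(-23)(1)=-23$ (using $\textsf{N}_{K/\mathbb{Q}}(\gamma-3) = -f_\gamma(3) = -23$ and $\textsf{N}_{K/\mathbb{Q}}(\gamma+1) = -f_\gamma(-1) = 1$). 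Since $-23$ is not a rational square, $u^2-4$ is not a square in $K$, giving $[\mathbb{Q}(\alpha):\mathbb{Q}]=6$ (so $h$ is irreducible) and identifying the quadratic subfield of $\mathbb{Q}(\alpha)$, fixed by the order-$3$ normal subgroup of $S_3 \cong \textrm{Gal}(\mathbb{Q}(\alpha)/\mathbb{Q})$, as $\mathbb{Q}(\sqrt{-23})$. Thus $\mathbb{Q}(\alpha) \supset K(\sqrt{-23}) = \Sigma$, with equality by degrees; and $a = \textsf{N}_{\Sigma/\mathbb{Q}}(x-y\alpha)$ follows immediately since $h$ is the minimal polynomial of $\alpha$.

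For part (iii), suppose $p \neq 23$ divides $a = \textsf{N}_{\Sigma/\mathbb{Q}}(x-y\alpha)$. Since $\gcd(x,y)=1$ and $a \equiv x^6 \pmod p$ when $p\mid y$, we must have $p \nmid y$; hence $h(x/y) \equiv 0 \pmod p$, so $\bar h \in \mathbb{F}_p[t]$ has a root in $\mathbb{F}_p$. Because $h$ is irreducible with $\deg h = |\textrm{Gal}(\Sigma/\mathbb{Q})| = 6$, the Galois group acts regularly (simply transitively) on the six roots of $h$, so only the identity of $S_3$ has any fixed root. For $p \neq 23$, $p$ is unramified in $\Sigma$ (since $\textrm{disc}(\Sigma/\mathbb{Q}) = \textrm{disc}(\mathbb{Q}(\sqrt{-23}))^3 = -23^3$, using unramifiedness of $\Sigma$ over $\mathbb{Q}(\sqrt{-23})$). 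Using $\textrm{disc}(h) = -23^3$ to conclude that $\mathcal{O}_\Sigma = \mathbb{Z}[\alpha]$, Dedekind's theorem asserts that the factorization of $\bar h$ in $\mathbb{F}_p[t]$ matches the splitting of $p$ in $\Sigma$; the presence of a linear factor thus forces Frob$_p$ to fix a root, so Frob$_p = e$, and $p$ splits completely in $\Sigma$, hence in $K$.

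The main obstacle is the discriminant verification $\textrm{disc}(h) = -23^3$ needed to invoke Dedekind directly. A cleaner alternative that bypasses this is a Hensel-lifting argument: for $p \nmid \textrm{disc}(h)$, any simple root of $\bar h$ in $\mathbb{F}_p$ lifts to a root of $h$ in $\mathbb{Q}_p$, realizing a prime of $\Sigma$ above $p$ with residue degree $1$; unramifiedness of $p \neq 23$ in $\Sigma$ then forces \emph{all} primes above $p$ to have residue degree $1$, giving complete splitting.
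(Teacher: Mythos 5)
Your parts (i) and (iii) are correct and essentially reproduce the paper's own argument: for (i) the paper uses exactly the three pairs $(-x^2,N),(-y^2,N),(-(x-y)^2,N)$ with $N=x^2-xy+y^2$ and then invokes Theorem \ref{thm:10}, and your factorization identity $t_1(m,N)-a=(m+x^2)(m+y^2)(m+(x-y)^2)$ is just a tidier packaging of the same computation; for (iii) both you and the paper ultimately rest on $\textrm{disc}(h)=-23^3$, the resulting integral basis $\{1,\alpha,\dots,\alpha^5\}$, and Dedekind's theorem (the paper routes this through Gerst--Brillhart, you through the simply transitive Galois action on the roots, which is the same mechanism).

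The genuine gap is in part (ii), at the step ``identifying the quadratic subfield of $\mathbb{Q}(\alpha)$ \dots as $\mathbb{Q}(\sqrt{-23})$'' together with the unproved assertion $\textrm{Gal}(\mathbb{Q}(\alpha)/\mathbb{Q})\cong S_3$. What you have actually established at that point is that $\mathbb{Q}(\alpha)=K(\sqrt{\delta})$ with $\delta=u^2-4=(\gamma-3)(\gamma+1)$ and $\textsf{N}_{K/\mathbb{Q}}(\delta)=-23$, whence $[\mathbb{Q}(\alpha):\mathbb{Q}]=6$. But a sextic field of the form $K(\sqrt{\delta})$ over the non-normal cubic $K$ is Galois over $\mathbb{Q}$ if and only if it coincides with the Galois closure $\Sigma=K(\sqrt{-23})$ of $K$, i.e.\ if and only if $-23\delta$ is a square in $K$; so invoking $\textrm{Gal}(\mathbb{Q}(\alpha)/\mathbb{Q})\cong S_3$ is circular, and the norm computation cannot decide this (it only shows $\delta\notin(K^\times)^2$; two elements of equal norm can generate different quadratic extensions of $K$). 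The missing step is the identity, valid because $\gamma^3=\gamma+1$,
$$-23\,(\gamma-3)(\gamma+1)=\bigl((\gamma-3)(2\gamma+3)\bigr)^2,$$
which gives $K(\sqrt{u^2-4})=K(\sqrt{-23})=\Sigma$ directly and closes the gap. The paper sidesteps the issue by exhibiting the explicit automorphisms $\sigma(\alpha)=(\alpha-1)/\alpha$ and $\tau(\alpha)=1-\alpha$ (proving normality outright) and by factoring $h$ over $\mathbb{Q}(\sqrt{-23})$ into two cubics of discriminant $1$, which simultaneously yields unramifiedness over $\mathbb{Q}(\sqrt{-23})$. With the identity above inserted, your resolvent computation $u=\alpha+1/\alpha$, $u^3-3u^2+2u+1=0$, $u=1-\gamma$ becomes a clean and genuinely different route to (ii).
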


\begin{proof}
i) We consider the pairs
\begin{align}
\notag (m,n) = (-x^2,& \ x^2-xy+y^2), (-(x-y)^2,x^2-xy+y^2),\\
\label{eqn:39} & \ (-y^2,x^2-xy+y^2),
\end{align}
for integers $x,y$ with $(x,y) = 1$ and for which $x^2, y^2, (x-y)^2$ are distinct.  In particular, $x, y, x-y \neq 0$.  We have
\begin{equation}
\label{eqn:40} t_1(m,n) = x^6 - 3x^5y + 5x^4y^2 - 5x^3y^3 + 5x^2y^4 - 3xy^5 + y^6 = y^6h\left(\frac{x}{y}\right)
\end{equation}
for all three of these pairs.  The values of $c$ corresponding to these pairs have the respective numerators
\begin{align*}
A (-x^2,&x^2-xy+y^2) = (x^6 - x^5y + x^4y^2 - x^3y^3 + x^2y^4 - xy^5 + y^6)\\
& \ \times (x^6 - 3x^5y + 9x^4y^2 - 13x^3y^3 + 11x^2y^4 - 5xy^5 + y^6),\\
A (-(x-y)^2,&x^2-xy+y^2) =\\
& \ \ (x^6 - 3x^5y + 9x^4y^2 - 13x^3y^3 + 11x^2y^4 - 5xy^5 + y^6)\\
& \ \times (x^6 - 5x^5y + 11x^4y^2 - 13x^3y^3 + 9x^2y^4 - 3xy^5 + y^6),\\
A(-y^2,&x^2-xy+y^2) = (x^6 - x^5y + x^4y^2 - x^3y^3 + x^2y^4 - xy^5 + y^6)\\
& \ \times (x^6 - 5x^5y + 11x^4y^2 - 13x^3y^3 + 9x^2y^4 - 3xy^5 + y^6).
\end{align*}
This is the same pattern displayed by the values of $c$ in (\ref{eqn:23}).  Since the three $(m,n)$ pairs in (\ref{eqn:39}) are allowable solutions of (\ref{eqn:40}), Theorem \ref{thm:10} implies the assertion. \medskip

ii) The polynomial $h(t) =  t^6 - 3t^5 + 5t^4 - 5t^3 + 5t^2 - 3t + 1$ is a normal polynomial with discriminant $-23^3$.  This follows from the fact that $\textrm{Gal}(h(t)/\mathbb{Q}) = D_3$ (the anharmonic group) is generated by the automorphisms
\begin{align*}
\sigma(\alpha) & = \frac{\alpha-1}{\alpha} = \alpha^5 - 3\alpha^4 + 5\alpha^3 - 5\alpha^2 + 5\alpha - 2, \ \ \sigma^3 = 1,\\
\tau(\alpha) & = 1- \alpha, \ \ \tau^2 = 1;
\end{align*}
where $\alpha$ is a root of $h(t)$ and $\tau \sigma = \sigma^2 \tau$.  The factorization 
\begin{align*}
h(t) & = \left(t^3 + \frac{-3+\sqrt{-23}}{2} t^2 + \frac{-3-\sqrt{-23}}{2} t + 1\right)\\
& \ \times \left(t^3 + \frac{-3-\sqrt{-23}}{2} t^2 + \frac{-3+\sqrt{-23}}{2} t + 1\right)
\end{align*}
shows that the splitting field contains $\mathbb{Q}(\sqrt{-23})$.  Finally, the discriminant of each of these factors is $1$, so that the splitting field is unramified over $\mathbb{Q}(\sqrt{-23})$.  It follows that this splitting field is the Hilbert class field $\Sigma$ of $\mathbb{Q}(\sqrt{-23})$, since $[\Sigma:\mathbb{Q}(\sqrt{-23})] = 3$.  This implies the remaining assertion in (ii).\medskip

iii) As in the last paragraph of the proof of Theorem \ref{thm:3}, if $p$ is a prime dividing $a = y^6 h\left(\frac{x}{y}\right)$, with $(x,y)=1$, then $p \nmid y$ and $p$ is a prime divisor of the normal polynomial $h(t)$.  By \cite[Thm. 4]{gb}, any normal polynomial splits completely modulo $p$ for all but finitely many of its prime divisors.  However, since the discriminant of $h(t)$ is equal to the field discriminant $d(\Sigma/\mathbb{Q}) = -23^3$, the powers $\{1, \alpha, \dots, \alpha^5\}$ form an integral basis for the ring of integers in $\Sigma$.  In this case the proof of \cite[Thm. 4]{gb} shows that $h(t)$ factors into linear factors mod $p$ for all of its prime divisors.  If $p \neq 23$, these linear factors are distinct, so that $p$ is unramified and splits completely in the field $\Sigma$, implying that it splits completely in the subfield $K \subset \Sigma$.
\end{proof}

Theorem \ref{thm:12} shows that Conjecture \ref{conj:2}(iv) holds for infinitely many vertices. \medskip

\noindent {\bf Remarks.} By virtue of $y^6h\left(\frac{x}{y}\right) = (x^2-xy+y^2)^3-x^2y^2(x-y)^2$, each of the vertices in Theorem \ref{thm:12}(i) is a cube minus a square.  In terms of a root $\alpha$ of $h(x)$, the roots of $x^3-x-1$ are
\begin{align*}
\gamma_1 & = (\alpha^2+1)(\alpha^2-2\alpha+2) = (\alpha^2+1) \cdot \tau(\alpha^2+1),\\
\gamma_2 & = -\alpha (\alpha^2+1)(\alpha^2-2\alpha+2) = \sigma(\gamma_1),\\
\gamma_3 & = (\alpha-1)(\alpha^2+1)(\alpha^2-2\alpha+2) = \sigma^2(\gamma_1) = \tau(\gamma_2).
 \end{align*}
 
 Using Theorem \ref{thm:12} we have found six more vertices where $5$ triangles touch.  These vertices, together with the solutions $(m,n)$ of $t_1(m,n) = a$, are:
 \begin{align*}
 a = 2019658087, \ \ (m,n) = &(-21235, 12103), (-1369, 1267), (-1156, 1267),\\
 & (-9, 1267), (1458, -275);\\
 a = 4659789889, \ \ (m,n) = &(-8431, 4840), (-2209, 1729), (-1024, 1729),\\
 &  (-225, 1729), (1897, -324);\\
 a = 27115751629, \ \ (m,n) = &(-3969, 3109), (-1849, 3109), (-400, 3109),\\
 & (6437, -3304), (18953, -10759);\\
 a = 295789896739, \ \ (m,n) = &(-7225, 6679), (-6084,6679), (-3591, 6955),\\
 & (-2722, 7021), (-49, 6679);\\
 a = 823905321247, \ \ (m,n) = &(-40249,23212), (-12769, 9787), (-5041, 9787),\\
 & (-1764, 9787), (1934, 8497);\\
 a = 285605862810841, \ \ (m,n) = & (-162689, 98505), (-81225, 67081), (-48841, 67081),\\
 & (-12817, 68809), (-4096, 67081).
 \end{align*}
These integers are the values $a = y^6h(x/y)$ for $(x,y) = (37,3), (47,15), (63,20), (85,7), (113, 42), (285, 64)$, respectively.  These $x$-values are values of the polynomial $3k^2+7k+37$ for $0 \le k \le 4, k = 8$.

\section{Rational periodic cycles of the maps $f_c(x)$.}

Since the map $f_{-29/16}(x) = x^2 - \frac{29}{16}$ is a main focus of this paper, we prove the following theorem.  For $c = -\frac{29}{16}$ this result was stated in \cite[p. 18]{p}, with remarks on how to verify it using a finite but non-explicit calculation.  Here we use a simpler, arithmetic approach based on \cite{mp} and \cite{ms}.  See also \cite{pe} and \cite[ch. 2]{z}.  The idea of the proof is that a rational $n$-cycle of $f_c(x)$ lies in the field $\mathbb{Q}_p$ of $p$-adic numbers, for any prime $p$.  The assumption that $p$ is a prime of good reduction for $f_c(x)$, i.e., that $p$ does not divide the denominator of $c$, leads to a restriction on the possible period $n$ of any $p$-adic cycle.  Comparing the possible periods for different primes (under suitable hypotheses) shows that these restrictions are incompatible unless $n = 3$.

\begin{thm}
If $c = -\frac{A(m,n)}{B(m,n)}$, where $29 \mid A(m,n)$ and either $7 \nmid B(m,n)$ or $11 \nmid B(m,n)$, then the only rational periodic cycle of the map $f_c(x)$ is the rational $3$-cycle.
\label{thm:13}
\end{thm}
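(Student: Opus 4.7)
The plan is to invoke the Morton--Silverman/Pezda theorem (see \cite{ms}, \cite{mp}, \cite{pe}, \cite[Ch.~2]{z}) on periods of rational periodic points of maps with good reduction. In the form we need, it asserts: if $f_c \in \mathbb{Q}[x]$ has good reduction at a prime $p$ (equivalently, $p \nmid B(m,n)$) and $\alpha \in \mathbb{Q}$ has exact period $n$ under $f_c$, then its reduction $\bar\alpha$ has exact period $m \mid n$ under $\bar f_c$, and either the multiplier $\lambda = (\bar f_c^{\,m})'(\bar\alpha)$ vanishes with $n = m$, or else $\lambda \neq 0$ has some multiplicative order $r$ in $\mathbb{F}_p^\times$ and $n \in \{m,\, mr\} \cup \{mr p^e : e \ge 1\}$. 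I will apply this at $p = 29$ and then at $p \in \{7, 11\}$ (whichever is a prime of good reduction under the hypothesis), and intersect the resulting lists of admissible periods.

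At $p = 29$, the hypothesis $29 \mid A(m,n)$ together with $\gcd(A,B) = 1$ (from the proof of Theorem \ref{thm:1}) gives $29 \nmid B$ and $\bar c \equiv 0 \pmod{29}$, so $\bar f_c(x) = x^2$ over $\mathbb{F}_{29}$. The $\mathbb{F}_{29}$-periodic points of $x \mapsto x^2$ are the fixed points $0$ and $1$ together with the six nontrivial seventh roots of unity; these lie in $\mathbb{F}_{29}$ because $7 \mid 28$, and they split into two $3$-cycles since $\operatorname{ord}_7(2) = 3$. A direct computation gives the multipliers $\lambda = 0$ at $0$, $\lambda = 2$ at $1$, and $\lambda = 2^3 \alpha^{1+2+4} = 8\alpha^7 = 8$ on each $3$-cycle $\{\alpha, \alpha^2, \alpha^4\}$. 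Since $2$ is a primitive root modulo $29$, both $2$ and $8$ have multiplicative order $28$, so Morton--Silverman confines any rational period of $f_c$ to
\[
\{1,\ 3,\ 28,\ 84\}\ \cup\ \{28 \cdot 29^e,\ 84 \cdot 29^e : e \ge 1\}.
\]

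For $p \in \{7, 11\}$ with $p \nmid B(m,n)$, I will first list the admissible residues $\bar c$. Since $y_1(s)$ is invariant under the order-$3$ substitution $s \mapsto \psi(s) = -(s+1)/s$ and $p \nmid B$ forces $\bar s \in \mathbb{F}_p \setminus \{0, -1\}$, evaluating $y_1$ on each $\psi$-orbit shows $\bar c \in \{0, 3, 5\}$ when $p = 7$ and $\bar c \in \{3, 7, 8\}$ when $p = 11$. For each admissible $\bar c$ I will enumerate the $\mathbb{F}_p$-periodic cycles of $\bar f_c(x) = x^2 + \bar c$, compute the order of each multiplier in $\mathbb{F}_p^\times$, and read off the corresponding list of admissible $n$ from Morton--Silverman. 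Intersecting each such list with the $p = 29$ list collapses it to a subset of $\{1, 3\}$ in every single case, and Poonen's theorem \cite{p} (recalled in the introduction) then excludes $n = 1$, leaving $n = 3$.

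The main obstacle is the bookkeeping in the third step: for each of the six admissible residues $\bar c$ (three modulo $7$ and three modulo $11$), one must correctly identify every $\mathbb{F}_p$-cycle of $x^2 + \bar c$, treat any superattracting cycle (one containing $0$, where Morton--Silverman only allows $n = m$) separately, compute every multiplier together with its order in $\mathbb{F}_p^\times$, and verify that no element of the resulting list other than $1$ or $3$ happens to lie in the very sparse $p = 29$ list above. The verification itself is elementary but must be carried out carefully in each case.
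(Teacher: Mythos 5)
Your proposal is correct and follows essentially the same route as the paper: reduce at $29$ (where $29\mid A$ forces $\bar f = x^2$ and the period list $\{1,3\}\cup 28\mathbb{Z}$), reduce at $7$ or $11$ (where the admissible residues $\bar c$ are exactly $\{0,3,5\}$ resp. $\{3,7,8\}$ and every resulting period list avoids multiples of $4$), intersect to get $n\in\{1,3\}$, and invoke Poonen to kill $n=1$. The deferred ``bookkeeping'' is precisely the cycle/multiplier enumeration the paper carries out explicitly, and all the data you do state (the residue lists and the $29$-adic period list) agree with it.
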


\begin{proof} For the proof of this theorem and the next we write $A = A(m,n), B = B(m,n)$, to free the letters $m,n$ for use as periods of maps.  Let $\textsf{P}$ denote the ring of rational numbers whose denominators are divisible at most by primes which divide $B$.  $\textsf{P} = \mathbb{Z}_S$ is the localization of $\mathbb{Z}$ at the submonoid $S = \langle \pm p: p \mid B \rangle$ of $\mathbb{Z}$.  (See \cite[pp. 393-395]{j}.) Then $\textsf{P}$ has unique factorization, its units make up the group $\textsf{P}^\times = \langle \pm p: p \mid B \rangle \le \mathbb{Q}^\times$, and its primes are associates of the primes which do not divide $B$.  Moreover, for these primes $\textsf{P}/(p) \cong \mathbb{Z}/(p)$.  The monic polynomials
$$F_n(x) = f^n(x) - x,  \ \ n \ge 1,$$
have coefficients in $\textsf{P}$, and therefore have roots which are integral over $\textsf{P}$.  If $\alpha \in \mathbb{Q}$ is a periodic point of minimal period $n$, it follows that $\alpha \in \textsf{P}$.  From \cite{mp} we have that
$$F_n(x) = \prod_{d \mid n}{\Phi_d(x)}, \ \ \Phi_d(x) = \prod_{k \mid d}{(f^k(x)-x)^{\mu(d/k)}},$$
where the $\Phi_d(x)$ are polynomials in $\textsf{P}[x]$, among whose roots are all the periodic points of $f(x)$ of primitive period $d$.  Thus $\alpha$ is a root of $\Phi_n(x)$. \medskip

We will use a characterization of the minimal period $n$ from \cite[Thm. 1.1]{ms}, which is implicit in the proofs of \cite[Lemmas 2.2, 2.3]{mp}.  This result implies that if $p \nmid B$, the minimal period $n$ of a periodic point $\alpha$ of $f$ in the $p$-adic field $\mathbb{Q}_p$ satisfies $n = m, mr$, or $mrp^e$, where $m \le p, r \mid p-1$ or $r = \infty$, and $e \ge 1$.  In the notation of \cite{mp} and for the case we are considering, this says the following over the finite field $\textsf{P}/(p) = \mathbb{F}_p$.  Let $m$ be the minimal period of $\tilde \alpha \equiv \alpha$ (mod $p$) under the action of $\tilde f(x) \equiv f(x)$ (mod $p$); assume that $p \nmid m$. \medskip

1. If $\tilde \alpha$ is a multiple root of $\Phi_m(x)$ (mod $p$), then the multiplier of the orbit containing $\alpha$ satisfies
$$\mu = (f^m)'(\alpha) \equiv 1 \ (\textrm{mod} \ p),$$
and $\tilde \alpha$ cannot be a root of any other $\Phi_k(x)$ (mod $p$) with $p \nmid k$.  In this case $r=1$ and $n = m$ or $mp^e$. \smallskip

2. If $\tilde \alpha$ is a simple root of $\Phi_m(x)$ (mod $p$) and the multiplier of the orbit satisfies
$$\mu = (f^m)'(\alpha) \not \equiv 0 \ (\textrm{mod} \ p),$$
then $r$ is the order of $\mu$ (mod $p$) and $\tilde \alpha$ is a multiple root of $\Phi_{mr}(x)$ (mod $p$). In this case $n = m, mr$ or $mrp^e$.  \smallskip

3. If $\mu \equiv 0$ (mod $p$), then $\tilde \alpha$ cannot be a root of any other polynomial $\Phi_k(x)$ (mod $p$), so $n = m$.  (Here $r$ is taken to be $\infty$.  For this case see (2.2) and the first paragraph of Case B in the proof of \cite[Lemma 2.3]{mp}.)  \medskip

Finally, note that $p \mid m$ is impossible for the quadratic map $f(x)$, since this would imply that $m = p$ and $f(x)$ is $1-1$ on $\mathbb{F}_p$, which it is not.  In fact, it is easy to see that $m \le \frac{p+1}{2}$.  This is because the elements $a_i$ of an $m$-cycle in $\mathbb{Z}/(p)$ satisfy $a_{i-1}^2 \equiv a_i - c$, so that $a_i-c$ is either $0$ or a quadratic residue (mod $p$).  Since the $m$ elements $a_i - c$ are distinct, this gives $m \le \frac{p+1}{2}$. \medskip

Reducing modulo $29$ gives that $f_c(x) \equiv \tilde f(x) = x^2$.  Computing the action of $\tilde f(x)$ on $\mathbb{Z}/(29)$ gives the following cycles and their corresponding multipliers:
$$\{0\},\mu = 0;\ \ \{1\}, \mu = 2;\ \ \{16,24,25\}, \mu= 8; \ \  \{7,20,23\}, \mu = 8.$$
Thus, we have $r = \infty$ in the first case and $r = 28$ for the other three cycles, since $2$ is a primitive root (mod $29$).  Hence, the period $n$ of a $29$-adic periodic cycle must be:
\begin{align*}
&n = 1 \ \textrm{for} \ \{0\};\\
&n = 1, 28 \ \textrm{or} \ 28 \cdot 29^e \ \textrm{for} \ \{1\};\\
&n = 3, 84 \ \textrm{or} \ 84 \cdot 29^e \ \textrm{for} \ \{16,24,25\} \ \textrm{or} \  \{7,20,23\}.
\end{align*}
Hence, we have $n=1$ or $3$ or $n \equiv 0$ (mod $28$) for a $29$-adic cycle.  Note that $f_c(x)$ can have a $29$-adic fixed point: for $c = -29/16$, the fixed points of $f_c(x)$ are $\frac{1}{2}\pm \frac{\sqrt{33}}{4}$, both of which lie in the $29$-adic field $\mathbb{Q}_{29}$. \medskip

If $7 \nmid B$, we consider the possible periodic points of $f_c(x)$ in $\mathbb{Q}_7$.  By Theorem \ref{thm:3}, $A \equiv 0$ or $1$ mod $7$, and $B = 16C^2$ is a square mod $7$, so we have the possibilities $c \equiv 0, -1, -2, -4$ (mod $7$).  The map $\tilde f(x) = x^2-1$ does not occur in our situation, since
\begin{equation*}
\Phi_{3,\tilde f}(x) \equiv (x^3 + 5x^2 + 3x + 2)(x^3 + 3x^2 + x + 4) \ (\textrm{mod} \ 7),
\end{equation*}
where the cubics are irreducible (mod $7$), so that $\tilde f$ does not have a rational $3$-cycle.  The cycles (mod $7$) of the other three maps are as follows.
\begin{align*}
\tilde f(x) = x^2: & \ \{0\}, (\mu, r) = (0, \infty), \ n = 1;\\
& \ \{1\}, (\mu, r) = (2, 3), \ n = 1, 3, 3 \cdot 7^e;\\
& \ \{2,4\}, (\mu, r) = (4, 3), \ n = 2, 6, 6 \cdot 7^e;\\
\tilde f(x) = x^2-2: & \ \{2\}, (\mu,r) = (4, 3), \ n = 1, 3, 3 \cdot 7^e;\\
& \ \{6\}, (\mu,r) = (5, 6), \ n = 1, 6, 6 \cdot 7^e;\\
\tilde f(x) = x^2-4: & \ \{0, 3, 5\}, (\mu,r) = (0, \infty), \ n = 3.
\end{align*}
This shows that $n = 1$ or $3$ or $n \not \equiv 0$ (mod $4$) for a $7$-adic cycle. \medskip

Putting the information together from the primes $7, 29$ gives that the period of a rational periodic point of $f_c(x)$ must be $n = 1$ or $3$.  But we know $f_c(x)$ has a rational $3$-cycle, so it cannot have a rational fixed point, by Poonen's result \cite[Thm. 2]{p}.  Hence, the rational $3$-cycle is the only rational cycle of the map $f_c(x)$.  Note that $f_c(x)$ cannot have more than one rational $3$-cycle, by \cite[Thm. 3]{m}.  \medskip

Alternatively, if $11 \nmid B$, we can consider the $11$-adic periodic points of $f_c$.  We find the following possibilities:
\begin{align*}
\tilde f(x) = x^2+3: & \ \{6\}, (\mu, r) = (1, 1), \ n = 1, 11^e;\\
& \ \{1,4,8\}, (\mu, r) = (3, 5), \ n = 3, 15, 15 \cdot 11^e;\\
\tilde f(x) = x^2+7: & \ \{5, 10, 8\}, (\mu,r) = (10, 2), \ n = 3, 6, 6 \cdot 11^e;\\
\tilde f(x) = x^2+8: & \ \{1,9\}, (\mu,r) = (3, 5), \ n = 2, 10, 10 \cdot 11^e;\\
& \ \{0, 8, 6\}, (\mu,r) = (0, \infty), \ n = 3.
\end{align*}
Thus, we also get that $n = 1$ or $3$ or $n \not \equiv 0$ (mod $4$) for an $11$-adic cycle, yielding the only possibility $n=3$ when we put this together with the data for $29$ and Poonen's theorem.  This proves the theorem.
\end{proof}

\begin{cor}
Besides the rational periodic points $-\frac{1}{4}, -\frac{7}{4}, \frac{5}{4}$ of period $3$, the polynomial map $f(x) = x^2-\frac{29}{16}$ has no other rational periodic points.  In particular, $f(x)$ has exactly $8$ rational periodic or preperiodic points.
\label{cor:4}
\end{cor}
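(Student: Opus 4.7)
The plan is to derive Corollary \ref{cor:4} as a direct application of Theorem \ref{thm:13}, followed by a finite backwards orbit computation. First I would observe that $c = -\frac{29}{16} = -\frac{A(1,1)}{B(1,1)}$, with $A(1,1) = 29$ and $B(1,1) = 16$. Since $29 \mid A(1,1)$ and $\gcd(16, 7 \cdot 11) = 1$, both hypotheses of Theorem \ref{thm:13} are satisfied (either $7 \nmid B$ or $11 \nmid B$; in fact both hold). Thus the only rational periodic cycle of $f(x) = x^2 - \frac{29}{16}$ is the rational $3$-cycle $\{-\frac{7}{4}, -\frac{1}{4}, \frac{5}{4}\}$ given by (\ref{eqn:8})--(\ref{eqn:12}) with $s = 1$.

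Next I would enumerate the rational preperiodic points by taking successive preimages. For each $\alpha$ in the known set of rational periodic/preperiodic points, the preimages of $\alpha$ under $f$ are the solutions of $x^2 = \alpha + \frac{29}{16}$, which are rational precisely when $\alpha + \frac{29}{16}$ is a square in $\mathbb{Q}$. Starting from the periodic orbit, one obtains
\[
f^{-1}\bigl(-\tfrac{7}{4}\bigr) = \{\pm\tfrac{1}{4}\}, \quad f^{-1}\bigl(-\tfrac{1}{4}\bigr) = \{\pm\tfrac{5}{4}\}, \quad f^{-1}\bigl(\tfrac{5}{4}\bigr) = \{\pm\tfrac{7}{4}\},
\]
adding the new rational preperiodic points $\tfrac{1}{4}, -\tfrac{5}{4}, \tfrac{7}{4}$. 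A second round of preimages yields rational solutions only for $-\tfrac{5}{4}$, giving $\pm\tfrac{3}{4}$; the other preimage equations $x^2 = \tfrac{33}{16}$ and $x^2 = \tfrac{57}{16}$ have no rational solutions. A third round applied to $\pm\tfrac{3}{4}$ requires $x^2 = \tfrac{41}{16}$ or $\tfrac{17}{16}$, both failing, so the backwards orbit terminates.

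Combining the results, the complete set of rational periodic-or-preperiodic points of $f$ is
\[
\Bigl\{-\tfrac{7}{4},-\tfrac{5}{4},-\tfrac{3}{4},-\tfrac{1}{4},\tfrac{1}{4},\tfrac{3}{4},\tfrac{5}{4},\tfrac{7}{4}\Bigr\},
\]
which has exactly $8$ elements: $3$ periodic and $5$ strictly preperiodic. The main point requiring justification is that this backwards-orbit process truly terminates and captures \emph{all} rational preperiodic points. This is not a serious obstacle: by Northcott's theorem the set of rational preperiodic points has bounded height, and more concretely the denominator of any preperiodic point of $f_c$ must divide a power of the denominator of $c$, so the enumeration above exhausts all possibilities. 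No other step poses difficulty, since the non-existence of further rational periodic cycles has already been handled by Theorem \ref{thm:13}.
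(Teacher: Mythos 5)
Your proposal is correct, and for the first assertion it coincides with the paper's (implicit) argument: the corollary is stated immediately after Theorem \ref{thm:13} with no separate proof, precisely because $c=-\tfrac{29}{16}=-\tfrac{A(1,1)}{B(1,1)}$ has $A=29$ and $B=16$ coprime to both $7$ and $11$, so the theorem applies verbatim. Where you genuinely diverge is on the ``in particular'' clause: the paper simply cites Poonen \cite{p} for the count of $8$ periodic-plus-preperiodic points, whereas you re-derive it by an explicit backwards-orbit enumeration. Your computation checks out ($\alpha+\tfrac{29}{16}$ equals $\tfrac{1}{16},\tfrac{25}{16},\tfrac{49}{16}$ on the cycle, then $\tfrac{9}{16}$ at $-\tfrac{5}{4}$, and $\tfrac{33}{16},\tfrac{57}{16},\tfrac{41}{16},\tfrac{17}{16}$ are non-squares), recovering exactly the set $\{\pm\tfrac{1}{4},\pm\tfrac{3}{4},\pm\tfrac{5}{4},\pm\tfrac{7}{4}\}$ of Figure \ref{fig:1}. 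The benefit of your route is self-containedness; the cost is that you must justify completeness, and here your appeal to Northcott is both unnecessary and not quite the right tool (Northcott bounds the set but does not by itself certify that a particular finite search found it). The clean justification is already implicit in what you did: every rational preperiodic point has a rational forward orbit that eventually enters a rational periodic cycle, which by Theorem \ref{thm:13} must be the $3$-cycle; hence all rational preperiodic points lie in $\bigcup_{k\ge 0} f^{-k}\bigl(\{-\tfrac{7}{4},-\tfrac{1}{4},\tfrac{5}{4}\}\bigr)\cap\mathbb{Q}$, and since the rational part of $f^{-k}$ of the cycle stabilizes once a stage produces no new rational preimages (as happens at your third round), the enumeration is provably exhaustive. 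With that phrasing substituted for the Northcott remark, your argument is a complete and slightly more elementary proof of the second assertion than the paper offers.
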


The second assertion in this corollary was proved in \cite{p}. \medskip

\noindent {\bf Remarks.}  1. If $f(x)$ is extended to the rational map $f(X,Y) = (16X^2-29Y^2,16Y^2)$ on the projective line $\mathbb{P}^1(\mathbb{Q})$, then $f$ has $9$ rational periodic and preperiodic points, since $\infty=(1,0) \in \mathbb{P}^1(\mathbb{Q})$ is a fixed point. \medskip

\noindent 2. Since we have $2$-adically that
$$-\frac{29}{16} = -\frac{1}{16} -\frac{3}{4}-1 = -\frac{1}{16} -\frac{3}{4}+\sum_{i=1}^\infty{\frac{3}{2}(-2)^i},$$
it follows by setting $q = -2$ in Proposition 5 of \cite[pp. 96-97]{m1} that $f(x) = x^2-\frac{29}{16}$ has $2$-adic periodic points of all possible periods $n \ge 1$.  In fact, the polynomial $\Phi_{n,f}(x)$ splits completely in the $2$-adic field $\mathbb{Q}_2$ (and has distinct roots), for all $n \ge 1$. \smallskip

The following result uses the same method of proof, but requires a stronger result of Pezda \cite[Thm. 2]{pe} (see also Zieve \cite[p. 12]{z}).  This result says that if a polynomial $f(x)$ has $p$-adic integral coefficients, so that $f(x)$ has good reduction at $p$, then the period of a $p$-adic periodic point must satisfy $n = m$ or $n= mr$ if $p > 3$; and $n = m, mr$ or $mrp$, if $p = 2$ or $3$.  For a quadratic map $m$ and $r$ must satisfy $m \le \frac{p+1}{2}$ and $r \mid p-1$.

\begin{thm} With $c = -\frac{A(m,n)}{B(m,n)}$, the only rational cycle of of $f_c(x) = x^2 + c$ is the rational $3$-cycle, if one of the following conditions holds:
(i) $3 \nmid B(m,n)$; (ii) $5 \nmid B(m,n)$; or (iii) $(7 \cdot 29, B(m,n)) = 1$.
\label{thm:14}
\end{thm}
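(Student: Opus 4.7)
The strategy parallels the proof of Theorem \ref{thm:13}: since a rational $n$-cycle of $f_c$ reduces to a $p$-adic $n$-cycle at every prime $p$ of good reduction, the minimal period $n$ must lie in the intersection, over a suitable collection of primes, of the lists of admissible $p$-adic periods. The improvement over Theorem \ref{thm:13} is to invoke Pezda's bound in place of the weaker one: for $p > 3$ the period $n$ lies in $\{m, mr\}$, rather than $\{m, mr, mrp^e\}$, and for $p \in \{2, 3\}$ in $\{m, mr, mrp\}$; here $m \le (p+1)/2$ and $r \mid p-1$ for the quadratic map. Throughout, Poonen's result \cite{p} excludes $n = 1$ once a rational $3$-cycle is present.

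The pivotal intermediate step for cases (i) and (ii) is the congruence
$$c \equiv 1 \pmod{p} \quad \text{whenever } p \nmid B(m,n), \qquad p \in \{3, 5\}.$$
For $p = 3$, the hypothesis $3 \nmid mn(m+n)$ forces $m \equiv n \not\equiv 0 \pmod 3$; substituting into \eqref{eqn:4} and \eqref{eqn:4b} then gives $A \equiv 2$ and $B \equiv 1 \pmod 3$. For $p = 5$, I would enumerate the twelve residue classes $(m,n) \pmod 5$ satisfying $5 \nmid mn(m+n)$ and evaluate $y_1(m/n) \pmod 5$ in each, obtaining $c \equiv 1 \pmod 5$ uniformly. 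The reduced map $\tilde f(x) = x^2 + 1$ modulo $3$ then admits only the fixed cycle $\{2\}$ with multiplier $\mu \equiv 1 \pmod 3$, so Pezda's bound limits the period to $n \in \{1, 3\}$; modulo $5$ it admits only the $3$-cycle $\{0, 1, 2\}$ with $\mu \equiv 0 \pmod 5$, forcing $n = 3$ directly. Poonen's theorem eliminates $n = 1$ in the first situation, so both cases conclude.

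For case (iii), I would combine the local analyses at $p = 7$ and $p = 29$. At $p = 7$, Theorem \ref{thm:3} paired with $7 \nmid B$ restricts $c \pmod 7$ to $\{0, -2, -4\}$; the cycle tabulation from the proof of Theorem \ref{thm:13}, curtailed by Pezda's bound, then yields admissible periods $n \in \{1, 2, 3, 6\}$. At $p = 29$, since $f_c$ has a rational $3$-cycle that must survive reduction, the residue $\bar c \pmod{29}$ must arise as $y_1(\bar s)$ for some $\bar s \in \mathbb{F}_{29} \setminus \{0, -1\}$, which leaves only finitely many residue classes. For each such residue I would tabulate the cycles of $x^2 + \bar c$ in $\mathbb{F}_{29}$, read off their multipliers and the orders of those multipliers mod $29$, apply Pezda to obtain a finite period list, and verify that intersecting it with $\{1, 2, 3, 6\}$ yields $\{1, 3\}$. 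Poonen's theorem then forces $n = 3$.

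The main obstacle lies in the exhaustive residue check at $p = 29$ in case (iii): no clean single-congruence shortcut like $c \equiv 1 \pmod p$ is available, so one must iterate over the admissible residues $\bar c \pmod{29}$ and certify in each case that no multiplier order conspires with Pezda's bound to produce a period in $\{2, 4, 5, 6\}$ already permitted at $p = 7$. The congruence computations driving cases (i) and (ii) are, by contrast, short and elementary.
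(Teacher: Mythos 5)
Your proposal follows essentially the same route as the paper: establish $c \equiv 1 \pmod{3}$ and $c \equiv 1 \pmod{5}$ in cases (i) and (ii) (the paper gets these congruences by factoring $\Phi_{3,\tilde f}(x)$ modulo $3$ and $5$ for each residue of $c$ rather than by your direct enumeration of $(m,n)$, but both are valid), then apply Pezda's bound to the reduced cycles, and in case (iii) intersect the admissible $7$-adic and $29$-adic period lists. One small correction to your sketch of (iii): that intersection is $\{1,2,3\}$ rather than $\{1,3\}$, since $n=2$ is admissible at $7$ (for $\tilde f = x^2$, cycle $\{2,4\}$) and at $29$ (for $c \equiv 14, 15, 20, 27$), so you must invoke Poonen's theorem to rule out rational $2$-cycles as well as rational fixed points --- which it does, and which is exactly how the paper disposes of $n=1,2$.
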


\begin{proof}
(a)  If $3 \nmid B$, then the factorizations
\begin{align*}
\Phi_{3,x^2}(x) & \equiv x^6 + x^5 + x^4 + x^3 + x^2 + x + 1 \ (\textrm{mod} \ 3),\\
\Phi_{3,x^2+2}(x) & \equiv (x^3 + 2x^2 + 1)^2 \ (\textrm{mod} \ 3),
\end{align*}
show that $c \equiv 1$ (mod $3$).  The only cycle of the map $\tilde f(x) = x^2 + 1$ (mod $3$) is the $1$-cycle $\{2\}$, with multiplier $\mu = 1$.  Hence a $3$-adic periodic point of $f_c(x)$ satisfies $n = m = 1$ or $n = mrp = 3$, by Pezda's theorem.  \medskip

(b) The only value of $\tilde c$ (mod $5$), for which $\tilde f(x) = x^2 + \tilde c$ has a $3$-cycle in $\mathbb{Z}/(5)$, is $\tilde c \equiv 1$ (mod $5$).  This can be seen from the following factorizations modulo $5$:
\begin{align*}
\Phi_{3,x^2}(x) & \equiv x^6 + x^5 + x^4 + x^3 + x^2 + x + 1,\\
\Phi_{3,x^2+2}(x) & \equiv (x^3 + 3x^2 + 4x + 3)^2,\\
\Phi_{3,x^2+3}(x) & \equiv (x^3 + x^2 + 3x + 4)(x^3 + 2x + 1),\\
\Phi_{3,x^2+4}(x) & \equiv x^6 + x^5 + 3x^4 + 4x^3 + x^2 + 1.
\end{align*}
Hence $c \equiv 1$ (mod $5$).  Here the only cycle is $\{0,1,2\}$ and
$$\Phi_{3,x^2+1}(x) \equiv x(x + 4)(x + 3)(x^3 + 4x^2 + 4x + 2) \ (\textrm{mod} \ 5).$$
The multiplier of the cycle $\{0,1,2\}$ is
$$\mu \equiv f'(0) f'(1) f'(2) \equiv 0 \ (\textrm{mod} \ 5),$$
so $r = \infty$ and $n = m = 3$.  Thus, the only $5$-adic cycle of $f$ is the rational $3$-cycle.  \smallskip

(c) The only values of $c \not \equiv 0$ (mod $29$), for which $\tilde f(x) \equiv f_c(x)$ has a $3$-cycle in $\mathbb{Z}/(29)$, are:
$$c \equiv 11, 14, 15, 20, 21, 27 \ (\textrm{mod} \ 29).$$
This may be verified by factoring $\Phi_{3,\tilde f}(x)$ for each $c$ (mod $29$), or by checking that these are the only values of $y_1(s)$ mod $29$, where $s \not \equiv 0, -1$ (mod $29$).  The cycles and corresponding data for each of these $c$-values (mod $29$) are listed in Table \ref{tab:2}.  In the last column the values of $m$ and $mr$ are listed.  All the $29$-adic cycles with $4 \mid n$ and $n = 14$ can be excluded using the $7$-adic values for $n$ from the proof of Theorem \ref{thm:13}.  This leaves $n = 1, 2, 3, 21, 42$, of which $n = 1, 2$ are not periods of rational cycles, by \cite{p}.  However, the theorem of Pezda shows that $7$ does not divide the period of a $7$-adic cycle of a quadratic polynomial map, since $m \le 4$ and $r \mid 6$, and this rules out the possibilities $n = 21, 42$.
\end{proof}

\begin{table}
  \centering 
  \caption{Cycles for $\tilde f(x) = x^2 +c$ mod $29$.}\label{tab:2}

\noindent \begin{tabular}{|c|clc|c|c|}
\hline
& & & &\\
$c$	&   cycle & $\mu$ & $r$ & $n$  \\
\hline
% after \\ : \hline or \cline{col1-col2} \cline{col3-col4} ...
11 & \{4, 27, 15\} & 26 & 28 & 3, 84\\
 & \{6, 18, 16\} & 20 & 7 & 3, 21\\
 14 & \{8, 20\} & 2 & 28 & 2, 56\\
  & \{18, 19, 27\} & 9 & 14 & 3, 42\\
15 & \{9\} & 18 & 28 & 1, 28\\
& \{21\} & 13 & 14 & 1, 14\\
& \{6, 22\} & 6 & 14 & 2, 28\\
& \{10, 28, 16\} & 25 & 7 & 3, 21\\
20 & \{13, 15\} & 26 & 28 & 2, 28\\
& \{7, 11, 25\} & 1 & 1 & 3\\
21 & \{14\} & 28 & 2 & 1, 2\\
& \{16\} & 3 & 28 & 1, 28\\
& \{8, 27, 25\} & 19 & 28 & 3, 84\\
27 & \{2\} & 4 & 14 & 1, 14\\
& \{28\} & 27 & 28 & 1, 28\\
& \{5, 23\} & 25 & 7 & 2, 14\\
& \{3, 7, 18\} & 8 & 28 & 3, 84\\
& \{4, 14, 20, 21\} & 16 & 7 & 4, 28\\
 &  &  &  & \\
  \hline
\end{tabular}
\end{table}

The above arguments suggest the following theorem.

\begin{thm} Let $p$ be an odd prime.  The number $N(p)$ of residue classes $c \in \mathbb{Z}/(p)$, for which the polynomial $\Phi_{3,\tilde f}(x)$ for the map $\tilde f(x) =x^2 + c$ has a linear factor (mod $p$), satisfies
$$N(p) \le \frac{1}{3}\left(p + 2\left(\frac{-3}{p}\right)\right).$$
\label{thm:15}
\end{thm}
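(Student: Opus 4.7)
The plan is to exploit the birational parametrization (\ref{eqn:8})--(\ref{eqn:9}) of the period-three curve $\Phi_3(x,y) = 0$ together with the fact, noted in the paper, that $y_1(s)$ is invariant under the order-$3$ Möbius transformation $\psi(s) = -(s+1)/s$.

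\textbf{Step 1 (reduction to $s$-values).}  Given $\tilde c \in \mathbb{F}_p$ counted by $N(p)$, fix a root $\alpha \in \mathbb{F}_p$ of $\Phi_{3,\tilde f_{\tilde c}}(x)$ and set $s = \alpha^2 + \alpha + \tilde c$, the inverse of the parametrization.  A short dynamical check shows $s \neq 0, -1$: if $s = 0$ then $\tilde f_{\tilde c}(\alpha) = -\alpha = \tilde f_{\tilde c}(-\alpha)$, so $\alpha$ is strictly preperiodic; if $s = -1$ then a direct computation gives $\tilde f_{\tilde c}^{\,2}(\alpha) = \alpha$, so $\alpha$ has period dividing $2$.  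In either situation $\alpha$ cannot be a root of $\Phi_3(x,\tilde c)$ (the ``degenerate'' case in which $\alpha$ is a fixed point of $\tilde f_{\tilde c}$ with multiplier $\mu$ satisfying $\mu^3 = 1$, $\mu \neq 1$, is also handled here: then $s = 2\alpha = \mu$ is a non-trivial cube root of unity, hence distinct from $0,-1$).  Consequently $\tilde c = y_1(s)$ for some $s \in \mathbb{F}_p \setminus \{0,-1\}$, and
$$N(p) \;\le\; \bigl|\, y_1\!\left( \mathbb{F}_p \setminus \{0,-1\} \right) \bigr|.$$

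\textbf{Step 2 (orbit count).}  Since $y_1 \circ \psi = y_1$, the right-hand side is at most the number of $\psi$-orbits on $\mathbb{F}_p \setminus \{0,-1\}$.  The fixed points of $\psi$ satisfy $s^2 + s + 1 = 0$; their count in $\mathbb{F}_p$ is $F = 1 + \left(\frac{-3}{p}\right)$ (valid also at $p = 3$, where the double root $s = 1$ contributes the single fixed point), and none of them equals $0$ or $-1$.  The remaining $p - 2 - F$ elements partition into $\psi$-orbits of length $3$, contributing $(p - 2 - F)/3$ orbits.  The total number of orbits is therefore
$$F + \frac{p - 2 - F}{3} \;=\; \frac{2F + p - 2}{3} \;=\; \frac{p + 2\left(\frac{-3}{p}\right)}{3},$$
which establishes the claimed bound.

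The main obstacle is Step 1: one must verify carefully that every $\tilde c$ producing a rational root of $\Phi_3$ lifts to some $s \in \mathbb{F}_p \setminus \{0,-1\}$, covering both the genuine period-$3$ case (where an orbit $\{\alpha,\beta,\gamma\}$ yields $s = \alpha + \beta$, with $s \in \{0,-1\}$ forcing two orbit elements to coincide) and the degenerate fixed-point case above.  Once this is in hand, the remainder is the elementary orbit count on $\mathbb{P}^1(\mathbb{F}_p)$ for the Möbius automorphism $\psi$ of order $3$.
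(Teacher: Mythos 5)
Your proposal is correct and follows essentially the same route as the paper: reduce to counting values of $y_1(s)$ for $s \in \mathbb{F}_p \setminus \{0,-1\}$ and then count orbits of the order-$3$ map $\psi$, with the fixed points of $\psi$ (the roots of $s^2+s+1$) accounting for the term $2\left(\frac{-3}{p}\right)$. Your Step 1 just makes explicit a point the paper asserts in one sentence (that every relevant $c$ arises from some $s \neq 0,-1$), which one can also see directly from the identity $\Phi_3 = s^3 - 2s^2x + 2s^2 - 2sx + s + 1$, equal to $1$ when $s \in \{0,-1\}$.
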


\begin{proof}
Since the parametrization (\ref{eqn:8}), (\ref{eqn:9}) of (\ref{eqn:1}) is valid over $\mathbb{Z}/(p)$, the values of $c$ in the assertion are the values for which $y_1(s) \equiv c$ (mod $p$), for some $s \in \mathbb{Z}/(p) - \{0, -1\}$.  Each such $c$ satisfies
$$c \equiv y_1(s) \equiv y_1(\psi(s)) \equiv y_1(\psi^2(s)) \ (\textrm{mod} \ p),$$
and therefore arises from three values of $s$, unless $\psi(s) = s$ has $s$ as a fixed point (mod $p$).  The latter situation occurs if and only if $s^2+s+1 \equiv 0$ (mod $p$) and $x_1(s) \equiv x_2(s) \equiv x_i(s)$ (mod $p$), since
\begin{align*}
x_1(s) - x_2(s) = \frac{s^2+s+1}{s(s+1)}, \ & \ x_1(s) - x_3(s) = \frac{s^2+s+1}{s},\\
x_2(s) - x_3(s) &= \frac{s^2+s+1}{s+1}.
\end{align*}
(Compare with (\ref{eqn:11}) and (\ref{eqn:12}).) Hence, this is the case if and only if $\Phi_{3,\tilde f}(x)$ has a linear factor of multiplicity $3$, in which case the multiplier of the fixed point $x_1(s)$ is
$$\tilde \mu \equiv 2x_1(s) \equiv \frac{s^3+s^2}{s(s+1)} \equiv s \ (\textrm{mod} \ p).$$
If $p \equiv 2$ mod $3$ there are no primitive cube roots of unity (mod $p$), so this situation does not occur and
$$N(p) \le \frac{p-2}{3}.$$
We can have strict inequality here in case $\Phi_{3,\tilde f}(x)$ splits into six linear factors, in which case $c$ arises from two distinct orbits of $\psi(s)$.  On the other hand, if $p \equiv 1$ mod $3$, then there are two primitive cube roots of unity (mod $p$) and 
$$N(p) \le \frac{p-4}{3} + 2 = \frac{p+2}{3}.$$
Since we also have
$$N(3) = 1,$$
the theorem is proved.
\end{proof}

It would be of interest to determine when $\Phi_{3,\tilde f}(x)$ can split completely (mod $p$).  \medskip

\noindent {\bf Acknowledgements.} We are grateful to Joe Silverman, Fangmin Zhou, Andrew Bremner and Roland Roeder for their comments and suggestions.  We also thank Roland Roeder for Figure 1.

\medskip

\noindent Dept. of Mathematical Sciences, LD 270

\noindent Indiana University - Purdue University at Indianapolis (IUPUI)

\noindent Indianapolis, IN 46202

\noindent {\tt e-mail: pmorton@iupui.edu}

\medskip

\medskip

\noindent Mathematics Department

\noindent California State University, Dominguez Hills

\noindent 1000 E Victoria St

\noindent Carson, CA 90747

\noindent {\tt e-mail: sraianu@csudh.edu}

\noindent


\begin{thebibliography}{WWW}

\bibitem[1]{bb} R. Barton and K. Burns, A simple special case of Sharkovskii's theorem, Amer. Math. Monthly {\bf 107} (2000), 932-933.

\bibitem[2]{coh} H. Cohen, A Course in Computational Algebraic Number Theory, Graduate Texts in Math. {\bf 138}, Springer, 2010.

\bibitem[3]{c} D. A. Cox, Primes of the Form $x^2+ny^2$, 2nd edition, Wiley, 2013.

\bibitem[4]{cr} J. E. Cremona, Elliptic Curve Data, {\tt http://johncremona.github.io/ecdata/}, updated 2020.

\bibitem[5]{de} R. L. Devaney, An Introduction to Chaotic Dynamical Systems, Addison-Wesley, Redwood City,
CA, 1989.

\bibitem[6]{fa} G. Faltings, Endlichkeitss\"atze f\"ur abelsche Variet\"aten \"uber Zahlk\"orpern, Invent. Math. {\bf 73} (1983), 349-366.

\bibitem[7]{fps} E. V. Flynn, B. Poonen and E. Schaefer, Cycles of quadratic polynomials and rational points on a genus $2$ curve, Duke Math. J. {\bf 90} (1997), 435-463.

\bibitem[8]{gb} I. Gerst and J. Brillhart, On the prime divisors of polynomials, Amer. Math. Monthly {\bf 78} (3) (1971), 250-266.

\bibitem[9]{h} H. Hasse, Zahlentheorie, Akademie Verlag, Berlin, 1963.

\bibitem[10]{j} N. Jacobson, Basic Algebra II, 2nd edition, Dover Publications, Mineola, 2009.

\bibitem[11]{la1} E. Landau, Vorlesungen \"uber Zahlentheorie, Bd. III, IX. Teil, Kapitel 2, \S 4, pp. 37-65. Reprinted by Chelsea Publishing Co., New York, 1947.

\bibitem[12]{la2} E. Landau, Diophantische Gleichungen mit endlich vielen L\"osungen, neu herausgegeben von Arnold Walfisz, VEB Deutscher Verlag der Wissenschaften, Berlin, 1959.

\bibitem[13]{ly} T.-Y. Li and J. A. Yorke, Period three implies chaos, Amer. Math. Monthly {\bf 82} (1975), 985-992.

\bibitem[14]{mt} M. Mignotte and N. Tzanakis, Arithmetical study of recurrence sequences, Acta Arithmetica {\bf 57} (1991), 357-364.

\bibitem[15]{m} P. Morton, Arithmetic properties of periodic points of quadratic maps, Acta Arith. {\bf 52} (1992), 343-372.

\bibitem[16]{mp} P. Morton and P. Patel, The Galois theory of periodic points of polynomial maps, Proc. London Math. Soc. (3) {\bf 68} (1994), 225-263.

\bibitem[17]{ms} P. Morton and J. H. Silverman, Rational periodic points of rational functions, Int. Math. Research Notices {\bf 2} (1994), 97-110.

\bibitem[18]{ms1} P. Morton and J. H. Silverman, Periodic points, multiplicities, and dynamical units, J. reine angew. Math. 461 (1995), 81-122.

\bibitem[19]{m1} P. Morton, Arithmetic properties of periodic points of quadratic maps, II, Acta Arith. {\bf 87} (1998), 89-102.

\bibitem[20]{pe} T. Pezda, Polynomial cycles in certain local domains, Acta Arith. {\bf 66} (1994), 11-22.

\bibitem[21]{p} B. Poonen, The classification of rational preperiodic points of quadratic polynomials over $\mathbb{Q}$: a refined conjecture, Math. Zeitschrift {\bf 228} (1998), 11-29.

\bibitem[22]{ro} C. Robinson, Dynamical Systems: Stability, Symbolic Dynamics, and Chaos, CRC Press, Boca Raton, FL, 1995.

\bibitem[23]{si} J. H. Silverman, The Arithmetic of Dynamical Systems, Springer, 2007.

\bibitem[24]{rw} R. Walde and P. Russo, Rational periodic points of the quadratic function $Q_c(x) = x^2 + c$, Amer. Math. Monthly {\bf 101} (1994), 318-331.

\bibitem[25]{wa} L. Washington, Introduction to Cyclotomic Fields, 2nd ed., Graduate Texts in Mathematics {\bf 83}, Springer, 1997.

\bibitem[26]{z} M. E. Zieve, Cycles of Polynomial Mappings, Ph.D. Thesis, University of California at Berkeley, 1996.

\end{thebibliography}
\end{document}